\tikzstyle arrowstyle=[scale=1]
\tikzstyle directed=[postaction={decorate,
decoration={markings,mark=at position .65 with {\arrow[arrowstyle]{stealth}}}}]
\tikzset{mid vert/.style={/utils/exec=\tikzset{every node/.append style={outer sep=0.8ex}},
postaction=decorate,decoration={markings,
mark=at position 0.5 with {\draw[-] (0,#1) -- (0,-#1);}}},
mid vert/.default=0.75ex}
\begin{document}

\title 
[Triangulations and separated models]
{Triangulations of polygons and stacked simplicial complexes:
  separating their Stanley--Reisner ideals}

\author{Gunnar Fl{\o}ystad}
\address{Matematisk Institutt\\
         Postboks 7803\\
         5020 Bergen}
\email{gunnar@mi.uib.no}

\author{Milo Orlich}
\address{Aalto University\\
        Finland}
\email{milo.orlich@aalto.fi}


\keywords{triangulation of polygon, stacked simplicial complex,
  separation of ideal, regular sequence, independent vertices}
\subjclass[2010]{Primary: 13F55; Secondary: 05C69, 05C70}
\date{\today}


\theoremstyle{plain}
\newtheorem{theorem}{Theorem}[section]
\newtheorem{corollary}[theorem]{Corollary}
\newtheorem*{main}{Main Theorem}
\newtheorem{lemma}[theorem]{Lemma}
\newtheorem{proposition}[theorem]{Proposition}
\newtheorem{conjecture}[theorem]{Conjecture}
\newtheorem{theoremp}{Theorem}

\theoremstyle{definition}
\newtheorem{definition}[theorem]{Definition}
\newtheorem{fact}[theorem]{Fact}
\newtheorem{obs}[theorem]{Observation}
\newtheorem{definisjon}[theorem]{Definisjon}
\newtheorem{problem}[theorem]{Problem}
\newtheorem{condition}[theorem]{Condition}

\theoremstyle{remark}
\newtheorem{notation}[theorem]{Notation}
\newtheorem{remark}[theorem]{Remark}
\newtheorem{example}[theorem]{Example}
\newtheorem{claim}{Claim}
\newtheorem{observation}[theorem]{Observation}


\newcommand{\psp}[1]{{{\bf P}^{#1}}}
\newcommand{\psr}[1]{{\bf P}(#1)}
\newcommand{\op}{{\mathcal O}}
\newcommand{\opw}{\op_{\psr{W}}}

\newcommand{\ini}[1]{\mathrm{in}(#1)}
\newcommand{\gin}[1]{\mathrm{gin}(#1)}
\newcommand{\kr}{{\Bbbk}}
\newcommand{\pd}{\partial}
\newcommand{\vardel}{\partial}
\renewcommand{\tt}{{\bf t}}


\newcommand{\coh}{{{\text{{\rm coh}}}}}


\newcommand{\modv}[1]{{#1}\text{-{mod}}}
\newcommand{\modstab}[1]{{#1}-\underline{\text{mod}}}

\newcommand{\sut}{{}^{\tau}}
\newcommand{\sumit}{{}^{-\tau}}
\newcommand{\til}{\thicksim}

\newcommand{\totp}{\mathrm{Tot}^{\prod}}
\newcommand{\dsum}{\bigoplus}
\newcommand{\dprod}{\prod}
\newcommand{\lsum}{\oplus}
\newcommand{\lprod}{\Pi}

\newcommand{\La}{{\Lambda}}

\newcommand{\sirstj}{\circledast}

\newcommand{\she}{\EuScript{S}\text{h}}
\newcommand{\cm}{\EuScript{CM}}
\newcommand{\cmd}{\EuScript{CM}^\dagger}
\newcommand{\cmri}{\EuScript{CM}^\circ}
\newcommand{\cler}{\EuScript{CL}}
\newcommand{\clerd}{\EuScript{CL}^\dagger}
\newcommand{\clerri}{\EuScript{CL}^\circ}
\newcommand{\gor}{\EuScript{G}}
\newcommand{\cF}{\mathcal{F}}
\newcommand{\cG}{\mathcal{G}}
\newcommand{\cM}{\mathcal{M}}
\newcommand{\cE}{\mathcal{E}}
\newcommand{\cI}{\mathcal{I}}
\newcommand{\cP}{\mathcal{P}}
\newcommand{\cK}{\mathcal{K}}
\newcommand{\cS}{\mathcal{S}}
\newcommand{\cC}{\mathcal{C}}
\newcommand{\cO}{\mathcal{O}}
\newcommand{\cJ}{\mathcal{J}}
\newcommand{\cU}{\mathcal{U}}
\newcommand{\cQ}{\mathcal{Q}}
\newcommand{\cX}{\mathcal{X}}
\newcommand{\cY}{\mathcal{Y}}
\newcommand{\cZ}{\mathcal{Z}}
\newcommand{\cV}{\mathcal{V}}

\newcommand{\mm}{\mathfrak{m}}

\newcommand{\dlim} {\varinjlim}
\newcommand{\ilim} {\varprojlim}

\newcommand{\CM}{\text{CM}}
\newcommand{\Mon}{\text{Mon}}


\newcommand{\Kom}{\text{Kom}}


\newcommand{\EH}{{\mathbf H}}
\newcommand{\res}{\text{res}}
\newcommand{\Hom}{\mathrm{Hom}}
\newcommand{\inhom}{{\underline{\mathrm{Hom}}}}
\newcommand{\Ext}{\mathrm{Ext}}
\newcommand{\Tor}{\mathrm{Tor}}
\newcommand{\ghom}{\mathcal{H}om}
\newcommand{\gext}{\mathcal{E}xt}
\newcommand{\id}{\text{{id}}}
\newcommand{\im}{\mathrm{im}\,}
\newcommand{\codim} {\mathrm{codim}\,}
\newcommand{\resol}{\text{resol}\,}
\newcommand{\rank}{\mathrm{rank}\,}
\newcommand{\lpd}{\mathrm{lpd}\,}
\newcommand{\coker}{\mathrm{coker}\,}
\newcommand{\supp}{\mathrm{supp}\,}
\newcommand{\Ad}{A_\cdot}
\newcommand{\Bd}{B_\cdot}
\newcommand{\Fd}{F_\cdot}
\newcommand{\Gd}{G_\cdot}


\newcommand{\sus}{\subseteq}
\newcommand{\sups}{\supseteq}
\newcommand{\pil}{\rightarrow}
\newcommand{\vpil}{\leftarrow}
\newcommand{\rpil}{\leftarrow}
\newcommand{\lpil}{\longrightarrow}
\newcommand{\inpil}{\hookrightarrow}
\newcommand{\pils}{\twoheadrightarrow}
\newcommand{\projpil}{\dashrightarrow}
\newcommand{\dotpil}{\dashrightarrow}
\newcommand{\adj}[2]{\overset{#1}{\underset{#2}{\rightleftarrows}}}
\newcommand{\mto}[1]{\stackrel{#1}\longrightarrow}
\newcommand{\vmto}[1]{\stackrel{#1}\longleftarrow}
\newcommand{\mtoelm}[1]{\stackrel{#1}\mapsto}
\newcommand{\bihom}[2]{\overset{#1}{\underset{#2}{\rightleftarrows}}}
\newcommand{\eqv}{\Leftrightarrow}
\newcommand{\impl}{\Rightarrow}

\newcommand{\iso}{\cong}
\newcommand{\te}{\otimes}
\newcommand{\into}[1]{\hookrightarrow{#1}}
\newcommand{\ekv}{\Leftrightarrow}
\newcommand{\equi}{\simeq}
\newcommand{\isopil}{\overset{\cong}{\lpil}}
\newcommand{\equipil}{\overset{\equi}{\lpil}}
\newcommand{\ispil}{\isopil}
\newcommand{\vvi}{\langle}
\newcommand{\hvi}{\rangle}
\newcommand{\susneq}{\subsetneq}
\newcommand{\sgn}{\text{sign}}


\newcommand{\xd}{\check{x}}
\newcommand{\ortog}{\bot}
\newcommand{\tL}{\tilde{L}}
\newcommand{\tM}{\tilde{M}}
\newcommand{\tH}{\tilde{H}}
\newcommand{\tvH}{\widetilde{H}}
\newcommand{\tvh}{\widetilde{h}}
\newcommand{\tV}{\tilde{V}}
\newcommand{\tS}{\tilde{S}}
\newcommand{\tT}{\tilde{T}}
\newcommand{\tR}{\tilde{R}}
\newcommand{\tf}{\tilde{f}}
\newcommand{\ts}{\tilde{s}}
\newcommand{\tp}{\tilde{p}}
\newcommand{\tr}{\tilde{r}}
\newcommand{\tfst}{\tilde{f}_*}
\newcommand{\empt}{\emptyset}
\newcommand{\bfa}{{\mathbf a}}
\newcommand{\bfb}{{\mathbf b}}
\newcommand{\bfd}{{\mathbf d}}
\newcommand{\bfl}{{\mathbf \ell}}
\newcommand{\bfx}{{\mathbf x}}
\newcommand{\bfm}{{\mathbf m}}
\newcommand{\bfv}{{\mathbf v}}
\newcommand{\bft}{{\mathbf t}}
\newcommand{\bbfa}{{\mathbf a}^\prime}
\newcommand{\la}{\lambda}
\newcommand{\bfen}{{\mathbf 1}}
\newcommand{\bfe}{{\mathbf 1}}
\newcommand{\ep}{\epsilon}
\newcommand{\en}{r}
\newcommand{\tu}{s}
\newcommand{\Sym}{\mathrm{Sym}}

\newcommand{\ome}{\omega_E}

\newcommand{\bevis}{{\bf Proof. }}
\newcommand{\demofin}{\qed \vskip 3.5mm}
\newcommand{\nyp}[1]{\noindent {\bf (#1)}}
\newcommand{\demo}{{\it Proof. }}
\newcommand{\demodone}{\demofin}
\newcommand{\parg}{{\vskip 2mm \addtocounter{theorem}{1}  
                   \noindent {\bf \thetheorem .} \hskip 1.5mm }}

\newcommand{\lcm}{{\mathrm{lcm}}}


\newcommand{\dl}{\Delta}
\newcommand{\cdel}{{C\Delta}}
\newcommand{\cdelp}{{C\Delta^{\prime}}}
\newcommand{\dlst}{\Delta^*}
\newcommand{\Sdl}{{\mathcal S}_{\dl}}
\newcommand{\lk}{\mathrm{lk}}
\newcommand{\lkd}{\lk_\Delta}
\newcommand{\lkp}[2]{\lk_{#1} {#2}}
\newcommand{\del}{\Delta}
\newcommand{\delr}{\Delta_{-R}}
\newcommand{\dd}{{\dim \del}}
\newcommand{\Del}{\Delta}

\renewcommand{\aa}{{\bf a}}
\newcommand{\bb}{{\bf b}}
\newcommand{\cc}{{\bf c}}
\newcommand{\xx}{{\bf x}}
\newcommand{\yy}{{\bf y}}
\newcommand{\zz}{{\bf z}}
\newcommand{\mv}{{\xx^{\aa_v}}}
\newcommand{\mF}{{\xx^{\aa_F}}}

\newcommand{\Symm}{\mathrm{Sym}}
\newcommand{\pnm}{{\bf P}^{n-1}}
\newcommand{\opnm}{{\go_{\pnm}}}
\newcommand{\ompnm}{\omega_{\pnm}}

\newcommand{\pn}{{\bf P}^n}
\newcommand{\hele}{{\mathbb Z}}
\newcommand{\nat}{{\mathbb N}}
\newcommand{\rasj}{{\mathbb Q}}
\newcommand{\bfone}{{\mathbf 1}}

\newcommand{\dt}{\bullet}
\newcommand{\disk}{\scriptscriptstyle{\bullet}}

\newcommand{\cxF}{F_\dt}
\newcommand{\pol}{f}

\newcommand{\Rn}{{\mathbb R}^n}
\newcommand{\An}{{\mathbb A}^n}
\newcommand{\frg}{\mathfrak{g}}
\newcommand{\PW}{{\mathbb P}(W)}

\newcommand{\pos}{{\mathcal Pos}}
\newcommand{\g}{{\gamma}}

\newcommand{\Vaa}{V_0}
\newcommand{\Bp}{B^\prime}
\newcommand{\Bpp}{B^{\prime \prime}}
\newcommand{\bbp}{\mathbf{b}^\prime}
\newcommand{\bbpp}{\mathbf{b}^{\prime \prime}}
\newcommand{\bp}{{b}^\prime}
\newcommand{\bpp}{{b}^{\prime \prime}}

\newcommand{\oLa}{\overline{\Lambda}}
\newcommand{\ov}[1]{\overline{#1}}
\newcommand{\ovv}[1]{\overline{\overline{#1}}}
\newcommand{\tm}{\tilde{m}}
\newcommand{\po}{\bullet}

\newcommand{\surj}[1]{\overset{#1}{\twoheadrightarrow}}
\newcommand{\Supp}{\mathrm{Supp}}

\def\CC{{\mathbb C}}
\def\GG{{\mathbb G}}
\def\ZZ{{\mathbb Z}}
\def\NN{{\mathbb N}}
\def\RR{{\mathbb R}}
\def\OO{{\mathbb O}}
\def\QQ{{\mathbb Q}}
\def\VV{{\mathbb V}}
\def\PP{{\mathbb P}}
\def\EE{{\mathbb E}}
\def\FF{{\mathbb F}}
\def\AA{{\mathbb A}}

\newcommand{\oR}{\overline{R}}
\newcommand{\bfu}{{\mathbf u}}
\newcommand{\nn}{{\mathbf n}}
\newcommand{\oa}{\overline{a}}
\newcommand{\cop}{\text{cop}}
\renewcommand{\op}{\text{op}\,}
\renewcommand{\mm}{{\mathbf m}}
\newcommand{\ngmi}{\text{neg}}
\newcommand{\up}{\text{up}}
\newcommand{\dw}{\text{down}}
\newcommand{\diw}[1]{\widehat{#1}}
\newcommand{\di}[1]{\diw{#1}}
\newcommand{\bo}{b}
\newcommand{\ub}{u}
\newcommand{\fs}{\infty}
\newcommand{\ifst}{\infty}
\newcommand{\mon}{{mon}}
\newcommand{\cl}{\text{cl}}
\newcommand{\intr}{\text{int}}
\newcommand{\ul}[1]{\underline{#1}}
\renewcommand{\ov}[1]{\overline{#1}}
\newcommand{\bipil}{\leftrightarrow}
\newcommand{\bfc}{{\mathbf c}}
\renewcommand{\mp}{m^\prime}
\newcommand{\np}{n^\prime}
\newcommand{\Mod}{\text{Mod }}
\newcommand{\Sh}{\text{Sh } }
\newcommand{\st}{\text{st}}
\newcommand{\hM}{\tilde{M}}
\newcommand{\hs}{\tilde{s}}
\newcommand{\ee}{\mathbf{e}}
\renewcommand{\dd}{\mathbf{d}}
\renewcommand{\en}{{\mathbf 1}}
\long\def\ignore#1{}
\newcommand{\lex}{{\text{lex}}}
\newcommand{\ordGL}{\succeq_{\lex}}
\newcommand{\ordG}{\succ_{\lex}}
\newcommand{\ordML}{\preceq_{\lex}}
\newcommand{\ordM}{\prec_{\lex}}
\newcommand{\tLa}{\tilde{\Lambda}}
\newcommand{\tGa}{\tilde{\Gamma}}
\newcommand{\STS}{\text{STS}}
\newcommand{\ii}{{\rotatebox[origin=c]{180}{\scalebox{0.7}{\rm{!}}}}}
\newcommand{\jj}{\mathbf{j}}
\renewcommand{\mod}{\text{ mod}\,}
\newcommand{\shmod}{\texttt{shmod}\,}
\newcommand{\hf}{\underline{f}}
\newcommand{\Glim}{\lim}
\newcommand{\Gcolim}{\colim}
\newcommand{\fm}{f^{\underline{m}}}
\newcommand{\fn}{f^{\underline{n}}}
\newcommand{\gn}{g_{\mathbf{|}n}}
\newcommand{\se}[1]{\overline{#1}}
\newcommand{\cB}{{\mathcal{B}}}
\newcommand{\fin}{{\text{fin}}}
\newcommand{\Poset}{{\text{\bf Poset}}}
\newcommand{\Set}{{\text{\bf Set}}}
\newcommand{\Homi}{\mathrm{Hom}^{L}}
\newcommand{\Homb}{\mathrm{Hom}_S}
\newcommand{\Homu}{\mathrm{Hom}^u}
\newcommand{\bfnu}{{\mathbf 0}}
\renewcommand{\bfen}{{\mathbf 1}}
\newcommand{\semip}{up semi-finite }
\newcommand{\semim}{down semi-finite }
\newcommand{\dif}[1]{\di{#1}_{fin}}
\newcommand{\llin}{\raisebox{1pt}{\scalebox{1}[0.6]{$\mid$}}}
\newcommand{\promap}{\mathrlap{{\hskip 2.8mm}{\llin}}{\lpil}}
\newcommand{\dual}{{\widehat{}}}
\newcommand{\Bool}{{\bf Bool }}
\newcommand{\pro}{{pro}}
\newcommand{\ovF}{\overline{F}}
\newcommand{\TO}{{\mathrm {to}}}
\newcommand{\dT}{\overset{\rightarrow}{T}}
\def\cl{\overline}

\newlength{\dhatheight}
\newcommand{\doublehat}[1]{%
    \settoheight{\dhatheight}{\ensuremath{\widehat{#1}}}%
    \addtolength{\dhatheight}{-0.35ex}%
    \widehat{\vphantom{\rule{1pt}{\dhatheight}}%
    \smash{\widehat{#1}}}}

\newcommand{\napo}{natural }
\newcommand{\ord}{\preceq}
\newcommand{\iC}{C}
\newcommand{\Spol}{k[x_{\iC}]}

\newcommand{\colim}{\mathrm{colim}}
\newcommand{\oPsi}{\overline{\Psi}}

\begin{abstract}
A triangulation of a polygon has an associated Stanley--Reisner
ideal. We obtain a full algebraic and combinatorial understanding
of these ideals and describe their separated models.

More generally we do this for stacked simplicial complexes,
in particular for stacked polytopes.
\end{abstract}

\maketitle

\section{Introduction}


Triangulations of polygons constitute a basic yet rich topic going into 
many directions. The most classical fact about these is perhaps that they
are counted by the Catalan numbers, \cite[Chap.23]{Gri}.
Their Stanley--Reisner ideals seem hitherto not
to have been systematically studied.
Here we get a full understanding of their algebraic
and combinatorial nature.
Considerably more generally, we do this for 
the Stanley--Reisner ideals of {\em stacked simplicial complexes}.

\begin{example}\label{eks:intro-hept1}
  Consider the triangulation of the heptagon in Figure~\ref{figone}.
This may be built up step by step from triangles, by successively
  attaching the triangles
  \[ 127,\ 257,\ 567,\ 245,\ 234. \]
  Each triangle after the first
  is attached to a single edge of some earlier triangle. This is
  a type of shelling called a {\em stacking},
\begin{figure}[htbp]
\begin{center}
  \begin{tikzpicture}[dot/.style={draw,fill,circle,inner sep=1pt},scale=1.1]
\draw [help lines,white] (-1.5,-1.5) grid (1.5,1.2);
\draw [fill=gray, opacity=0.3] (0,1)--(.79,.63)--(.98,-.23)--(.43,-.9)
  --(-.43,-.9)--(-.98,-.23)--(-.79,.63)--(0,1);
  \foreach \l [count=\n] in {0,1,2,3,4,5,6} {
    \pgfmathsetmacro\angle{90-360/7*(\n-1)}
      \node[dot] (n\n) at (\angle:1) {};
    \fill (\angle:1)  circle (0.08);
  }
  \foreach \l [count=\n] in {0,1,2,3,4,5,6} {
    \pgfmathsetmacro\angle{90-360/7*(\n-1)}
      \node (m\n) at (\angle:1.3) {$\n$};
  }
  \draw[thick] (n1) -- (n2) -- (n3) -- (n4) -- (n5) -- (n6) -- (n7) -- (n1);
  \draw[thick] (n2)--(n7)--(n5)--(n2)--(n4);
\end{tikzpicture}
\qquad\quad
\begin{tikzpicture}[dot/.style={draw,fill,circle,inner sep=1pt},scale=1.1]
\draw [help lines,white] (-1.5,-1.5) grid (5,1.2);
\draw [fill=gray, opacity=0.3] (0,1)--(.79,.63)--(.98,-.23)--(.43,-.9)
  --(-.43,-.9)--(-.98,-.23)--(-.79,.63)--(0,1);
\fill[red] (0,.77)  circle (0.07);
\fill[red] (-.75,-.17)  circle (0.07);
\fill[red] (.75,-.17)  circle (0.07);
\fill[red] (.25,-.4)  circle (0.07);
\fill[red] (-.18,.15)  circle (0.07);
\draw[thick,red] (.75,-.17)--(.25,-.4)--(-.18,.15)--(-.75,-.17);
\draw[thick,red] (-.18,.15)--(0,.77);
  \foreach \l [count=\n] in {0,1,2,3,4,5,6} {
    \pgfmathsetmacro\angle{90-360/7*(\n-1)}
      \node[dot] (n\n) at (\angle:1) {};
    \fill (\angle:1)  circle (0.08);
  }
  \draw[thick] (n1) -- (n2) -- (n3) -- (n4) -- (n5) -- (n6) -- (n7) -- (n1);
\draw[thick] (n2)--(n7)--(n5)--(n2)--(n4);
\node at (2,0) {$\rightsquigarrow$};
\fill[red] (3,.7)  circle (0.07);
\fill[red] (3,-.7)  circle (0.07);
\fill[red] (3.7,0)  circle (0.07);
\fill[red] (4.6,0)  circle (0.07);
\fill[red] (5.5,0)  circle (0.07);
\draw[thick,red] (3,.7)--(3.7,0)--(3,-.7);
\draw[thick,red] (3.7,0)--(5.5,0);
\end{tikzpicture}
\caption{}
\label{figone}
\end{center}
\end{figure}   and
  every triangulation of a polygon is a stacking.
  Moreover to a triangulation of the polygon we may associate a tree (drawn in red in Figure~\ref{figone}),
  showing how the triangles are attached to each other.
\end{example}

  This gives our two fundamental notions: That of stacking and
  the associated (hyper)tree.

  \medskip

  Let $X$ be a simplicial complex on a set $A$, i.e., a family of subsets
  of $A$ such that if $F \in X$ and $G \sus F$, then $G \in X$.
  Let $F_1, F_2, \dots, F_k$ be an ordering of the facets (the maximal faces)
  of $X$. We assume that the $F_i$'s all have the same cardinality. Let $X_p$
  be the simplicial complex generated by $F_1, \ldots, F_p$.

  The sequence $F_1,\dots,F_k$ is a {\em stacking} of $X$ if each $F_p$ is attached to
  $X_{p-1}$ along a single codimension-one face of $X_{p-1}$. So we may
  write $F_p = G_p \cup \{v_p\}$ where $G_p$ is a face of $X_{p-1}$ and
  $v_p$ is not a vertex of $X_{p-1}$.
  This is a shelling, but a particularly simple kind of shelling, since each
  $F_p$ is attached to a single codimension-one face, in contrast to
  a union of one or more such faces. A simplicial complex that has 
  a stacking as above is called a {\em stacked simplicial complex}. Such 
  simplicial complexes have appeared in the literature also as ``facet 
  constructible complexes''~(see~\cite{DS17,Goe}). Although they have been
  previously studied
  from the point of view of commutative algebra, to the best of our knowledge
  it was with a different persepective than in this paper. For instance, in~\cite{DS17} 
  the focus is  more on the homological invariants and Cohen--Macaulayness 
  of such complexes.

  To a stacked simplicial complex $X$ we associate a (hyper)tree as in
  the example above. Let $V$ be an index set for
  the facets of $X$. For a codimension-one face~$G$ of $X$ which is on
  at least two facets, let $e_G = \{ v \in V \, | \, F_v \supseteq G \}$.
  This gives a hypergraph on $V$ whose edges are the sets $e_G$. In fact this hypergraph
  is a hypertree $T$: it is connected,
  each pair of edges intersects in at most one
  vertex, and there are no cycles. The hypertree~$T$ is an ordinary
  tree, like in Figure~\ref{figone}, when each codimension-one face is on at most
  two facets. Then $X$ is a triangulated ball. In fact, $X$ may then be
  realized as a stacked polytope (see~\cite{Grum}), and every stacked polytope is of this
  kind. For the relationship between stacked simplicial
  complexes and stacked polytopes, we refer to Section~4.5 of~\cite{Goe}. 

  \medskip
  Given an (ordinary) tree $T$, let $V$ be the vertices of $T$, and $E$ the
  edge set of~$T$. Let $\iC \sus E \times V$ be the incidence relation
  consisting of pairs $(e,v)$ such that $v$ is a vertex on the edge
  $e$. 
  Let $\Spol$ be the polynomial ring in the variables
  $x_{e,v}$ for $(e,v) \in \iC$. 
  We associate a squarefree monomial ideal $I(T)$
  in the polynomial ring $\Spol$ as follows. Given
  a pair of vertices $v,w$ of $T$, there is a unique path between $v$ and $w$
  in the tree $T$: %
\[\begin{tikzpicture}[dot/.style={draw,fill,circle,inner sep=1pt},scale=1.1]
\fill (0,0)  circle (.08);
\fill (1,0)  circle (.08);
\fill (2.5,0)  circle (.08);
\fill (3.5,0)  circle (.08);
\draw[very thick] (0,0)--(1.3,0);
\draw[very thick,dotted] (1.3,0)--(2.2,0);
\draw[very thick] (2.2,0)--(3.5,0);
\coordinate [label=above: $v$] (v) at (0,0);
\coordinate [label=above: $v^\prime$] (vp) at (1,0);
\coordinate [label=below: $e$] (e) at (.5,0);
\coordinate [label=below: $f$] (f) at (3,0);
\coordinate [label=above: $w$] (w) at (3.5,0);
\coordinate [label=above: $w^\prime$] (wp) at (2.5,0);
\end{tikzpicture}\]
Associate to the pair of vertices $\{v,w\}$
the monomial $m_{v,w} = x_{e,v} \cdot x_{f,w}$.

The ideal $I(T)$ is the monomial ideal generated by the $m_{v,w}$ as $v$ and $w$
run through all distinct pairs of vertices of $V$.

We show that the Stanley--Reisner ring of any triangulation of a polygon is
obtained from $\Spol/I(T)$ by dividing out by a suitable regular sequence of
variable differences $x_{e,v^\prime} - x_{f,w^\prime}$.
More generally any Stanley--Reisner
ring of a {stacked simplicial complex} is obtained this way.
The rings $\Spol/I(T)$ for trees $T$ are thus the ``initial objects'' or
``free objects'' for Stanley--Reisner rings of stacked simplicial complexes.
Formulated otherwise, let $I$ be the Stanley--Reisner ring of a stacked
simplicial complex. The {\em separated models} of $I$ are one or more
of the~$I(T)$.

\begin{figure}
\begin{center}
\begin{tikzpicture}[dot/.style={draw,fill,circle,inner sep=1pt},scale=1.2]
\draw [help lines,white] (3,-1.8) grid (5,1.8);
\fill (3,.7)  circle (0.07);
\fill (3,-.7)  circle (0.07);
\fill (3.7,0)  circle (0.07);
\fill (4.6,0)  circle (0.07);
\fill (5.5,0)  circle (0.07);
\draw[thick] (3.7,0)--(3.052,.648);
\draw[thick] (3.7,0)--(3.052,-.648);
\draw[thick] (4.6,0)--(3.77,0);
\draw[thick] (5.5,0)--(4.67,0);
\node at (3.5,.5) {$a$};
\node at (3.5,-.5) {$b$};
\node at (4.2,.2) {$c$};
\node at (5.1,.24) {$d$};
\coordinate [label=left: $1$] (e) at (3,.7);
\coordinate [label=left: $3$] (3) at (3,-.7);
\coordinate [label=left: $2$] (2) at (3.65,0);
\coordinate [label=below: $4$] (4) at (4.6,0);
\coordinate [label=below: $5$] (5) at (5.5,0);
\end{tikzpicture}\qquad\qquad
\begin{tikzpicture}[dot/.style={draw,fill,circle,inner sep=1pt},scale=1.2]
\draw [help lines,white] (-1.5,-1.6) grid (1.5,1.4);
\draw [fill=gray, opacity=0.3] (0,1)--(.79,.63)--(.98,-.23)--(.43,-.9)
  --(-.43,-.9)--(-.98,-.23)--(-.79,.63)--(0,1);
  \foreach \l [count=\n] in {0,1,2,3,4,5,6} {
    \pgfmathsetmacro\angle{90-360/7*(\n-1)}
      \node[dot] (n\n) at (\angle:1) {};
    \fill (\angle:1)  circle (0.08);
  }
  \foreach \l [count=\n] in {0,1,2,3,4,5,6} {
    \pgfmathsetmacro\angle{90-360/7*(\n-1)}
      \node (p\n) at (\angle:1.35) {};
  }
  \node[font=\footnotesize] at (p1) {$(a,1)$};
  \node[font=\footnotesize] at (p2) {$(b,2)$};
  \node[font=\footnotesize] at (1.5,-.3) {$(d,5)$};
  \node[font=\footnotesize] at (p4) {$(c,4)$};
  \node[font=\footnotesize] at (p5) {$(a,2)$};
  \node[font=\footnotesize] at (-.73,-1.5) {$=(d,4)$};
  \node[font=\footnotesize] at (-1.5,-.3) {$(b,3)$};
  \node[font=\footnotesize] at (p7) {$(c,2)$};
  \draw[thick] (n1) -- (n2) -- (n3) -- (n4) -- (n5) -- (n6) -- (n7) -- (n1);
  \draw[thick] (n2)--(n7)--(n5)--(n2)--(n4);
\end{tikzpicture}
\caption{}
\label{figtwo}
\end{center}
\end{figure}

\begin{example}\label{eks:intro-hept2}
Consider the  directed tree in Figure~\ref{figtwo}.
The ideal $I(T)$ is generated by the ten monomials
  \begin{align*}
    m_{12} &= x_{a,1}x_{a,2}& m_{13} &= x_{a,1}x_{b,3}& 
    m_{14} &= x_{a,1}x_{c,4} &m_{15}& = x_{a,1}x_{d,5}\\
    m_{23} &=x_{b,2}x_{b,3}&  m_{24} &= x_{c,2}x_{c,4} & 
    m_{25} &= x_{c,2}x_{d,5}&  m_{34} &= x_{b,3}x_{c,4}\\
   &&  m_{35} &= x_{b,3}x_{d,5} &
    m_{45}&=x_{d,4}x_{d,5}.&&
    \end{align*}
  Then $I(T)$ is the Stanley--Reisner ideal of a stacked simplicial complex
  of dimension~$3$ (with facets of cardinality $4$) with eight vertices and
  five facets. Dividing out by the variable difference $x_{a,2} - x_{d,4}$,
  we get the Stanley--Reisner ring of the triangulation of the  heptagon
  $\Spol/(I(T) + (x_{a,2} - x_{d,4}))$. Figure~\ref{figtwo}, on the right,
  shows the triangulation
  with our new labelings of the vertices.
\end{example}

The ideals $I(T)$ are introduced in \cite{AFL} where
they are shown to be all possible polarizations of the square of
the graded maximal ideal $(x_e)^2_{e \in E}$ in $k[x_e]_{e \in E}$. 
If~$I_X$ is the Stanley--Reisner ring of a stacked simplicial complex
we therefore have processes:
\[ \xymatrix{{}  & I(T) \ar^{\text{joining variables}}[rd] & \\
      & & I_X. \\
      I = (x_e)^2_{e \in E} \ar^{\text{separating variables}}[ruu] & &}
    \]
    Each of the arrows above preserves the graded Betti numbers. Hence every $I_X$
    has the same graded Betti numbers as a second power of a
    graded maximal ideal~$(x_e)_{e \in E}$.

\medskip
Let $\Spol_1 = \langle x_{e,v} \rangle_{(e,v) \in \iC}$ be the linear subspace
of one-dimensional forms in the polynomial ring $\Spol$. A subspace $L$ of this
linear space is a {\it regular linear space} if it has a basis consisting
of a regular sequence of variable differences for $\Spol/I(T)$.
The quotient ring by the space $L$ of linear forms
will still be a polynomial ring divided by a monomial ideal.
We show the following.

\medskip
\noindent {\bf Theorem \ref{thm:regtree-part}.}
{\em There is a one-to-one correspondence between regular linear spaces
for $\Spol/I(T)$ and partitions of the vertex set\/ $V$.
}

\medskip
In particular for the partition with one part, the whole of $V$, the
regular sequence consists of all variable differences $x_{e,v} - x_{e,w}$ for
$e = \{v,w\} \in E$, and the quotient ring is $k[x_e]_{e \in E}/m^2$, where
$m = ( x_e )_{e \in E}$ is the irrelevant maximal ideal.

\medskip
\noindent {\bf Theorem \ref{thm:regsq-partind}.}
{\em There is a one-to-one correspondence between regular linear spaces
for $\Spol/I(T)$ giving {\em squarefree} quotient rings, and partitions of
the vertex set\/ $V$ into sets of\/ {\em independent} vertices.
}
\medskip

These quotient
rings give the Stanley--Reisner rings of stacked simplicial complexes.
\medskip

In \cite{FlPar} the first author gives a one-to-one correspondence
between partitions of the vertex set
of a tree~$T$ into $(r+1)$ independent sets, and partitions of the edge set
of $T$ into $r$ sets. We recall this in the appendix. 
The above may then be reformulated as:

\medskip
\noindent {\bf Theorem \ref{thm:regsq-partedge}.}
{\em There is a one-to-one correspondence between regular linear spaces
for $\Spol/I(T)$ giving squarefree quotient rings, and partitions of
the edge set $E(T)$. Moreover, the dimension of the simplicial complex
 associated to this quotient Stanley--Reisner ring is one less than the number 
 of parts in the partition.}

\begin{example}\label{eks:intro-hept3}
Consider Example \ref{eks:intro-hept2} above. The regular
  linear space $L$ is the space $L = \langle x_{a,2} - x_{d,4} \rangle$.
  It corresponds to the partitions of vertices and partitions of edges
  of the tree in Figure~\ref{figthree}. 
  These partitions are respectively
  \[ V = \{1,5 \} \cup \{ 2\} \cup \{3\} \cup \{4\}, \qquad
    E = \{ a,d \} \cup \{b \} \cup \{c\}. \]
  There are three parts in the edge partition and so the dimension of
  the associated simplicial complex is one less, the dimension of the
  triangulated polygon.
\end{example}
\begin{figure}
\begin{center}
\begin{tikzpicture}[dot/.style={draw,fill,circle,inner sep=1pt},scale=1.4]
\draw[very thick,red] (3.7,0)--(3,.7);
\draw[very thick,green] (3.7,0)--(3,-.7);
\draw[very thick,blue] (4.6,0)--(3.7,0);
\draw[very thick,red] (5.5,0)--(4.6,0);
\fill (3,.7)  circle (0.07);
\fill (3,-.7)  circle (0.07);
\fill (3.7,0)  circle (0.07);
\fill (4.6,0)  circle (0.07);
\fill (5.5,0)  circle (0.07);
\node at (3.44,.44) {$a$};
\node at (3.2,-.3) {$b$};
\node at (4.2,.17) {$c$};
\node at (5.1,.206) {$d$};
\node at (2.8,.9) {$1$};
\node at (3.75,-.22) {$2$};
\node at (2.8,-.9) {$3$};
\node at (4.6,-.22) {$4$};
\node at (5.5,-.22) {$5$};
\end{tikzpicture}
\caption{}
\label{figthree}
\end{center}
\end{figure}

Finally we show the following.

\medskip
\noindent {\bf Theorem \ref{thm:ball-partedgeind}.}
{\em There is a one-to-one correspondence between regular linear spaces
for $\Spol/I(T)$ giving squarefree quotient rings whose associated simplicial
complex is a {\it triangulated ball}, and partitions of
the edge set $E(T)$ into sets of independent edges.
}
\medskip

In particular the last two theorems above give that
triangulations of {\it simplicial polygons} correspond to partitions of the
edges of trees $T$ into three parts, each part being a set of independent edges.
In particular only trees $T$ whose maximal vertex degree is $3$ arise
in this context (which is easy to see directly like in Example~\ref{eks:intro-hept1}).

\medskip
The organization of this article is as follows.
In Section~\ref{sec:sepjoin} we recall the notions of separating and joining
variables in monomial ideals. We
develop basic auxiliary results for doing this. We also recall
the notion of separated model.
In Sections~\ref{sec:triangulation} and~\ref{sec:tree}
we recall basic notions for simplicial
complexes. We define stacked simplicial complexes and hypertrees. We show
that the separated models of stacked simplicial complexes are
the ideals $I(T)$.

Section~\ref{sec:regtree} is the main technical part and gives the
combinatorial description of which
linear spaces of variable differences are regular linear spaces
for~$\Spol/I(T)$.
Section~\ref{sec:regsqfree} describes the regular linear spaces that
give squarefree quotient rings. Section~\ref{sec:part2} describes
the ordering relation between partitions of vertices that corresponds
to inclusion of regular linear spaces. Lastly in Section~\ref{sec:ball}
we describe those
regular linear spaces where the quotient ring is associated to
a triangulation of a ball, or equivalently of a stacked polytope. We also
describe the Stanley--Reisner ring of the boundary of these polytopes, which
are simplicial spheres.

The appendix 
recalls the correspondence between
partitions of vertices of $V(T)$ into independent sets and partitions of
edges of $E(T)$.

\subsection*{Acknowledgements}

We thank
Lars H\"allstr\"om, Veronica Crispin Quinonez, Russ Woodroofe
and the anonymous referee for all their comments, which  improved this paper.
In particular Lars H\"allstr\"om suggested the conceptual gain of
indexing the variables in $k[x_{\iC}]$ by pairs of edges and vertices $(e,v)$
such that $v \in e$, instead of letting the variables be indexed by
$E \times \{0,1\}$, as we did in a preliminary version of this article.
The second author was supported by the Finnish Academy of Science and Letters, with the \textit{Vilho, Yrj\"o and Kalle V\"ais\"al\"a Fund}. 

\medskip\noindent
Data availability statement: this manuscript has no associated data.

\section{Separations and joins for Stanley--Reisner ideals}
\label{sec:sepjoin}

We recall the notion of separation for monomial ideals $I$. It is
a converse to the notion of dividing a quotient ring $S/J$ out
by a variable difference which is a non-zero divisor.
When $I = I_X$ or $J = J_X$ is a Stanley--Reisner ideal, we descibe how the
simplicial complex $X$ transforms under these processes.

\medskip
For a set $V$ denote by $k[x_V]$ the polynomal ring in the variables
$x_v$ for $v \in V$. With some abuse of notation, for $R \sus V$
let $x_R$ denote the monomial
$\prod_{r \in R} x_r$. (This should not cause confusion since in the polynomial
ring we always surround~$x_V$ with square brackets.)

\subsection{Separating a variable}

The following definition is from \cite[Section 2]{FGH}.

\begin{definition}
Let $V^\prime \mto{p} V$ be a surjection of finite sets with the
cardinality of $V^\prime$ one more than that of $V$. Let $v_1$ and $v_2$
be the two distinct elements of $V^\prime$ which map to a single
element $v$ in $V$. Let $I$ be a monomial ideal in the polynomial ring
$k[x_V]$ and $J$ a monomial ideal in $k[x_{V^\prime}]$. We say
$J$ is a {\it simple separation} of $I$ if the following hold:
\begin{itemize}
\item[i.] The monomial ideal $I$ is the image of $J$ by the map
$k[x_{V^\prime}] \pil k[x_V]$.
\item[ii.] Both the variables $x_{v_1}$ and $x_{v_2}$ occur in some minimal
generators of $J$ (usually in distinct generators).
\item[iii.] The variable difference $x_{v_1} - x_{v_2}$ is a non-zero divisor
in the quotient ring $k[x_{V^\prime}]/J$.
\end{itemize}
More generally, if $V^\prime \mto{p} V$ is a surjection of finite sets
and $I \sus k[x_V]$ and $J \sus k[x_{V^\prime}]$ are monomial ideals such
that $J$ is obtained by a succession of simple separations of $I$,
$J$ is a {\it separation} of $I$.
 If $J$ has no further separation, we call $J$ a {\it separated model}
(of $I$).
\end{definition}

Let $X$ be a simplicial complex on the set $V$. This is a family of subsets of
$V$ such that $F \in X$ and $G \sus F$ implies $G \sus F$. 
The set of $v \in V$ with $\{v\} \in X$ is the {\it support} of $X$.
For $R \sus V$ the {\it restriction $X_R$} if the simplicial complex on
$R$ consisting of all $F \in X$ such
that $F \sus R$. Denote by $X_{-R}$ the restriction $X_{R^c}$ where
$R^c$ is the complement of $R$ in~$V$. The {\it link $\lk_X R$} is the
simplicial complex on $R^c$ consisting
of all $F \sus R^c$ such that $F \cup R \in X$. If $Y \sus X$ are simplicial
complexes, denote by $X \setminus Y$ the relative simplicial complex,
consisting of those $F \in X$ which are not in $Y$.

Let $I_X$ be the Stanley--Reisner ideal of $X$, the monomial ideal in
$k[x_V]$ whose generators are the monomials $x_R$ for $R \not \in X$. Suppose we use
$v$ to separate~$I_X$ to an ideal~$I_{X^\prime}$ in the polynomial
ring $k[x_{V^\prime}]$. Write the minimal set of monomial
generators of~$I_X$ as~$\cM_0 \cup \cM_v$,
where $\cM_0$ consists of those that do not contain $x_v$ and $\cM_v$ of those
of the form $x_v\cdot x_R$. The separated ideal $I_{X^\prime}$ will then
have minimal generators $\cM_0 \cup \cM_{v,1} \cup \cM_{v,2}$ (sets of monomials
in $k[x_{V^\prime}]$). Here $\cM_{v,i}$ consists of those minimal generators that
contain $x_{v_i}$. There is a bijection between $\cM_{v,1} \cup
\cM_{v,2}$ and $\cM_v$ by sending $x_{v_i}\cdot x_R$ to $x_v \cdot x_R$.

\subsection {Criteria for separating a variable}
\label{subsec:sepjoin-sep}

Here is a general description of how $I_X$ can be separated using
the variable $x_v$.

\begin{proposition}  \label{pro:sepjoin-critsep}
We may separate $I_X$ using the variable $x_v$ iff the following holds:
there is a partition of the faces into two non-empty parts
\[ X_{-\{v\}} \setminus \lk_X v = \cF_1 \sqcup \cF_2 \]
where each $\cF_i$ is closed under taking smaller sets in
the sense that if\/ $G \sus F$ and\/ $F \in \cF_i$, then either $G \in \cF_i$
or $G \in \lk_X v$.
The facets of the simplicial complex\/ $X^\prime$ in the separated
ideal\/ $I_{X^\prime}$ are then
obtained from the facets $F$ of\/ $X$ as follows:

\begin{itemize}
\item If\/ $F = G \cup \{v\}$ then $G \cup \{v_1,v_2\}$ is a facet of\/ $X^\prime$.
\item If\/ $F$ is in $\cF_1$ then $F \cup \{v_1\}$ is a facet of\/ $X^\prime$.
\item If\/ $F$ is in $\cF_2$ then $F \cup\{v_2\}$ is a facet of\/ $X^\prime$.
\end{itemize}
\end{proposition}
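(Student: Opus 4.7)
The plan is to prove both directions of the equivalence and read off the facet description from the $(\Leftarrow)$ construction. For $(\Leftarrow)$, I would construct the separated ideal~$J$ explicitly. Each minimal generator of~$I_X$ involving~$x_v$ has the form $x_v\cdot x_R$ where $R\cup\{v\}$ is a minimal non-face; since $R$ is then a face of~$X$ while $R\cup\{v\}$ is not, $R$ lies in $X_{-\{v\}}\setminus\lk_v X$ and hence in exactly one part~$\cF_i$. The key step is to define $J$ by replacing $x_v\cdot x_R$ with $x_{v_{3-i}}\cdot x_R$ whenever $R\in\cF_i$, keeping the remaining generators of~$I_X$ unchanged. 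Condition~(i) would then hold by construction, and (ii) should follow because any nonempty~$\cF_i$ contains such an~$R$: picking $F\in\cF_i$ and letting $R\subseteq F$ be minimal with $R\not\in\lk_v X$ gives $R\cup\{v\}$ a minimal non-face, and the closure property forces $R\in\cF_i$.

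The crux would be condition~(iii). Since $J$ is squarefree, the non-zero-divisor condition on $x_{v_1}-x_{v_2}$ reduces to requiring that every facet of the associated complex $X'$ contain $v_1$ or~$v_2$. I would proceed by contradiction: supposing some facet~$F$ of~$X'$ lay entirely in $V\setminus\{v\}$, first $F\not\in\lk_v X$ (else $F\cup\{v_1,v_2\}$ would be a strictly larger face of~$X'$), and second $F\cup\{v_i\}\not\in X'$ for $i=1,2$. Unwinding how the generators of~$J$ were defined should yield $R, R'\subseteq F$ with $R\cup\{v\}$ and $R'\cup\{v\}$ minimal non-faces and $R\in\cF_1$, $R'\in\cF_2$. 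But $F$ lies in exactly one~$\cF_j$, and the closure property applied to the member opposite~$j$ forces it into~$\cF_j$, contradicting disjointness. The facet description should then emerge directly from this construction: facets containing both $v_1$ and $v_2$ are of the form $G\cup\{v_1,v_2\}$ with~$G$ a facet of $\lk_v X$ (equivalently, $G\cup\{v\}$ a facet of~$X$), and facets containing exactly one~$v_i$ are of the form $F\cup\{v_i\}$ with~$F$ a facet of~$X$ lying in~$\cF_i$.

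For the converse~$(\Rightarrow)$, I would define $\cF_i:=\{F\in X_{-\{v\}}\setminus\lk_v X : F\cup\{v_i\}\in X'\}$. Disjointness should be immediate (an $F$ in both would contain no minimal non-face subset involving~$v$, forcing $F\in\lk_v X$), the union covering $X_{-\{v\}}\setminus\lk_v X$ should follow from the NZD condition (an $F$ in neither could be extended within $V\setminus\{v\}$ to a facet of~$X'$ avoiding both $v_1$ and~$v_2$, which is forbidden), and the closure property should be direct (a generator of~$J$ obstructing $G\cup\{v_i\}$ also obstructs $F\cup\{v_i\}$ for $G\subseteq F$), with nonemptiness of the~$\cF_i$ coming from nonemptiness of the two halves of the separation of~$\cM_v$. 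The main obstacle will be condition~(iii): translating the algebraic regular-sequence condition on $x_{v_1}-x_{v_2}$ into the combinatorial closure property on the partition of $X_{-\{v\}}\setminus\lk_v X$ is the real content of the proposition, and it is where the interaction between minimal non-face preimages and the closure hypothesis must be leveraged to force the contradiction.
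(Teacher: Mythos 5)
Your proposal is correct and follows essentially the same route as the paper: the same opposite-index assignment of generators $x_{v_{3-i}}x_R$ for $R\in\cF_i$ in the construction of the separated ideal, and the same interplay of the non-zero-divisor hypothesis with the closure property to obtain disjointness and covering in the converse. The only cosmetic difference is that you check condition (iii) via the facet criterion (every facet of $X'$ must meet $\{v_1,v_2\}$, i.e.\ the later Lemma~\ref{lem:simp-nz}.a, which is independent of this proposition) and a contradiction on a facet avoiding both $v_i$, whereas the paper argues directly that $x_{v_1}x_F,\,x_{v_2}x_F\in I_{X^\prime}$ forces $x_F\in I_{X^\prime}$.
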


\begin{proof} Assume first that $I_{X^\prime}$ is a separation of $I_X$. 
If $F \in X_{-\{v\}} \setminus \lk_X v$ then $x_v x_F \in I_X$
(otherwise $F$ would be in $\lk_X v$).
Let $\cF_1$ be the set of those $F$ such that $x_{v_1}x_F$ is in
$I_{X^\prime}$ and $x_F$ is not in $I_{X^\prime}$, or equivalently
in $I_X$. Similarly define $\cF_2$ as the set of $F$ 
such that $x_{v_2}x_F$ is in $I_{X^\prime}$ and $x_F$ is not in $I_{X^\prime}$.
Let us show that $\cF_1$ and $\cF_2$ are disjoint.
If both $x_{v_1}x_F$ and $x_{v_2}x_F$ are in $I_{X^\prime}$, then
since $x_{v_1} - x_{v_2}$ is a nonzero divisor for
$k[x_{V^\prime}]/I_{X^\prime}$, we have $x_F$ in $I_{X^\prime}$ and so
in $I_X$. But then $F$ could not have been in $X$.

Suppose conversely we have the partition $\cF_1 \sqcup \cF_2$.
Let the ideal $I_{X^\prime}$ be constructed as just before this 
Subsection~\ref{subsec:sepjoin-sep} so it is generated by 
$\cM_0 \cup \cM_{v,1} \cup\cM_{v,2}$.
Let us show that $x_{v_1} - x_{v_2}$ is a nonzero divisor of
$k[x_{V^\prime}]/I_{X^\prime}$. Suppose $(x_{v_1} - x_{v_2})x_F$ is in $I_{X^\prime}$.
Then $x_{v_1} x_F$ and $x_{v_2} x_F$ are both in $I_{X^\prime}$.
We must show that $x_F$ is in $I_{X^\prime}$, or equivalently in $I_X$.
Suppose not, so $F$ is in $X_{-\{v\}}$. It is not in $\lk_X v$ since
$x_v x_F \notin I_X$. If say $F \in \cF_1$, then $F \cup \{v_1 \}$ is a
face of $X^\prime$ and $x_{v_1} x_F$ is not in $I_{X^\prime}$, a
contradiction. Hence $x_F$ is in $I_{X^\prime}$.  
\end{proof}

For $v \in V$ and $X$ a simplicial complex, let the neigborhood of $v$ be
\[ N(v) = \{w \mid \{ v,w \} \in X \} \sus V. \]
Note that $N(v)$ is non-empty iff $v$ is in the support of $X$,
in which case $v \in N(v)$. 

\begin{corollary} \label{cor:sep-flag}
  Let $X$ be a flag simplicial complex on $V$, i.e.,
  $I_X$ is generated by quadratic monomials. Let $v$ be in the support of $X$.
Suppose $X_{-N(v)} = X_1 \cup X_2$ where
    $X_1$ and $X_2$ are simplicial complexes
    supported on disjoint vertex sets $V_1$ and~$V_2$.
    Then  using $x_v$ the ideal
    $I_X \sus k[x_V]$ may be separated to an ideal $I_{X^\prime}
        \sus k[x_{V^\prime}]$. The facets of $X^\prime$ correspond to the
	facets $F$ of $X$ as follows:
        \begin{itemize}
        \item If\/ $F = G \cup \{v\}$ contains
          $v$ then $G \cup \{v_1,v_2\}$ is a facet of\/ $X^\prime$.
        \item If\/ $F$ is supported on $V_1$ then $F \cup \{v_1\}$ is
          a facet of\/ $X^\prime$.
        \item If\/ $F$ is supported on $V_2$ then $F \cup \{v_2\}$ is
          a facet of\/ $X^\prime$.
        \end{itemize}
      \end{corollary}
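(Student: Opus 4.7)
The plan is to verify the criterion of Proposition~\ref{pro:sepjoin-critsep} for the vertex~$v$ by exhibiting an explicit partition $X_{-\{v\}} \setminus \lk_v X = \cF_1 \sqcup \cF_2$ with the required closure property; the facet description will then fall out as a translation of the conclusion. Set $W = V \setminus N(v)$. The hypothesis says that the support of $X_{-N(v)}$ is $V_1 \sqcup V_2 \sus W$ and that no face of $X_{-N(v)}$ has vertices in both $V_1$ and $V_2$. Using the flag property, I would first observe that for $F \in X$ with $v \notin F$ one has $F \cup \{v\} \in X$ iff $F \cup \{v\}$ is a clique iff $F \sus N(v) \setminus \{v\}$, so
\[ X_{-\{v\}} \setminus \lk_v X = \{ F \in X \mid v \notin F,\ F \cap W \neq \emptyset \}. \]

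For every such $F$ the set $F \cap W$ is a nonempty face of $X_{-N(v)} = X_1 \cup X_2$, and since $V_1 \cap V_2 = \emptyset$ exactly one of $V_1$, $V_2$ contains it. I would therefore define
\[ \cF_i = \{ F \in X_{-\{v\}} \setminus \lk_v X \mid F \cap W \sus V_i \}, \qquad i = 1,2, \]
which is the required partition. To verify the closure condition, take $G \sus F$ with $F \in \cF_i$; then $G \in X$ and $v \notin G$. If $G \cap W = \emptyset$, then $G \sus N(v) \setminus \{v\}$, and since $G$ is a clique with every vertex adjacent to $v$, the flag property forces $G \cup \{v\} \in X$, hence $G \in \lk_v X$. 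Otherwise $\emptyset \neq G \cap W \sus F \cap W \sus V_i$, so $G \in \cF_i$.

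Applying Proposition~\ref{pro:sepjoin-critsep} now produces the separation. The facet description is a direct translation: a facet $F = G \cup \{v\}$ of $X$ containing $v$ produces the facet $G \cup \{v_1, v_2\}$ of $X'$, while a facet $F$ of $X$ entirely supported on $V_i$ satisfies $F \cap W = F \sus V_i$, hence lies in $\cF_i$ and produces the facet $F \cup \{v_i\}$ of $X'$. The only genuine subtlety is the well-definedness of the partition, which relies on the fact that $X_{-N(v)}$ decomposes as $X_1 \cup X_2$ over disjoint supports, so no face of $X_{-N(v)}$ bridges $V_1$ and $V_2$; the flag hypothesis enters only to guarantee that a subface living in $N(v) \setminus \{v\}$ automatically lies in $\lk_v X$, and I do not expect any further difficulty.
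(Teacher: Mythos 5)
Your proof is correct and follows essentially the same route as the paper: both verify the hypothesis of Proposition~\ref{pro:sepjoin-critsep} by sorting a face $F$ according to which of $V_1$, $V_2$ contains its part $F\cap W$ lying outside $N(v)$, using flagness only to see that a face contained in $N(v)\setminus\{v\}$ already lies in $\lk_v X$. Your observation that $F\cap W$ is itself a face of $X_{-N(v)}=X_1\cup X_2$ (so lands wholly in one $V_i$) is a slightly more direct packaging of the paper's pairwise argument on the vertices of $F_0$, but the content is the same.
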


\begin{proof} Let $U = N(v) \setminus \{v\}$.
The link $\lk_X v$ is supported on $U \sus V$. 
Let $F \in X_{-\{v\}}\setminus \lk_X v$. Write $F$ as a disjoint union
$F_0 \cup F_1$ where $F_1 = F \cap U$.
Since $X$ is flag, if $F_0 = \emptyset$ we would have $F$ in $\lk_X v$.
So $F_0$ is non-empty and we show it is a subset of either $V_1$ or $V_2$. 
Let $a,b \in F_0$. Then $a$ is not in the link $\lk_X v$, 
so $a$ is either in $V_1$ or $V_2$. Similarly with $b$. 
So $\{a,b\}$ is in $X_{-N(v)}$ and so in either $X_1$ or $X_2$.
Hence $a,b$ are in the same set $V_i$. The upshot is that $F_0$
is a subset of either $V_1$ or $V_2$.

We then let $\cF_1$ be the set of those $F$ such that $F_0$ is a subset of $V_1$ and
similarly for $\cF_2$. These will then be disjoint and closed under taking
smaller sets.
\end{proof}

\begin{example}
  In Figure \ref{figone} in the introduction one may apply the above
  corollary to the vertex
  $v = 5$. This gives the separated ideal $I(T)$ of
  Example \ref{eks:intro-hept2}. The vertex $v=5$ is the only vertex we
  may use to get a separated ideal.
  These things may also be seen by Proposition \ref{pro:sepjoin-critsep}. 
  \end{example}

\subsection {Criteria for joining variables}

We present here basic results on dividing out a Stanley--Reisner
ring by variable differences. 

Let $X$ be a simplicial complex on a set $V$, and $F$ a facet of
$X$. Then for the algebraic set $A(X)$ in the affine space $\AA^V_k$ defined
by the Stanley--Reisner ideal $I_X \sus k[x_V]$,
the facet $F$ corresponds to the linear
space $A(F)$ in $\AA_k^V$ where all coordinates $x_v = 0$ for
$v \not \in F$, while
the $x_v$ for $v \in F$ may take arbitrary values.

For $v_1,v_2 \in V$, let $V_1 = (V \setminus \{v_1,v_2 \}) \cup \{v\}$.
The natural map $V \pil V_1$ sending $v_1, v_2 \mapsto v$ gives
a surjection of polynomial rings $k[x_V] \pil k[x_{V_1}]$. Let the ideal
$I$ be the image of $I_X$. Then $I$ may or may not be squarefree.
If $I$ is squarefree we say that $x_{v_1} - x_{v_2}$ {\em cuts squarefree}.
Then let $I = I_{X_1}$ where $X_1$ is the
associated simplicial complex. We then have a commutative diagram
of algebraic sets:
\[ \xymatrix{ A(X_1) \ar@{^{(}->}^{\phi}[r] \ar@{^{(}->}[d]
    &  A(X) \ar@{^{(}->}[d] \\
    \AA^{V_1}_k \ar@{^{(}->}[r] & \AA_k^V .}
\]
Let $e_v$ be the point in affine space $\AA^V_k$ where $x_v$ takes value
$1$ and the other variables value $0$. The map $\phi$ above sends
\begin{equation} \label{eq:regsimp:ae}
  \sum_{i \in V_1} a_i e_i  \mapsto a_v e_{v_1} + a_v e_{v_2} +
  \sum_{i \neq v} a_ie_i.
  \end{equation}

\begin{lemma} \label{lem:simp-nz}
Let $X$ be a simplicial complex on a set $V$.

\begin{itemize}
\item[a.] 
A variable difference $x_{v_1} - x_{v_2} $ where $v_1, v_2 \in V$
is a nonzero divisor for $S/I_X$
iff for each facet $F$ of $X$, at least one of the variables $v_1$ or $v_2$
is in $F$.
\item[b.] The ideal $I$ is squarefree iff whenever
  $F \cup \{v_1\}$ and $F \cup \{v_2\}$ are faces of $X$,
  then $F \cup \{v_1,v_2 \}$ is a face of $X$.
\item[c.] Let $F_1, \ldots, F_r$ be the facets of $X$.
  If the difference $x_{v_1} - x_{v_2}$ is a nonzero divisor and cuts squarefree,
  the facets of $X_1$ are
  $G_1, \ldots, G_r$ where:
  \begin{itemize}
  \item If\/ $F_i$ contains exactly one of $v_1$ and $v_2$,
    then $G_i = F_i \setminus \{v_1, v_2 \}$.
  \item If\/ $v_1, v_2$ are both in $F_i$ then
    $G_i = (F_i  \setminus \{v_1,v_2 \}) \cup \{v \}$.
  \end{itemize}
  \end{itemize}
\end{lemma}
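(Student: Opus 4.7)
The plan is to prove the three parts in sequence, each resting on the primary decomposition $I_X = \bigcap_F P_F$, where $F$ ranges over facets of $X$ and $P_F = (x_v : v \notin F)$. For part (a), the associated (hence minimal) primes of the squarefree monomial ideal $I_X$ are precisely the $P_F$, so $x_{v_1} - x_{v_2}$ is a nonzero divisor iff it lies in none of them. Since $P_F$ is generated by variables, the linear form $x_{v_1}-x_{v_2}$ belongs to $P_F$ iff both $x_{v_1}$ and $x_{v_2}$ do, iff neither $v_1$ nor $v_2$ lies in $F$; the contrapositive gives the criterion. For part (b), let $\pi\colon k[x_V] \to k[x_{V_1}]$ be the natural surjection. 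Then $I = \pi(I_X)$ is generated by $\pi(x_R)$ as $R$ ranges over minimal non-faces of $X$, and $\pi(x_R)$ fails to be squarefree exactly when $\{v_1,v_2\} \subseteq R$ (in which case $\pi(x_R) = x_v^2 \cdot x_{R \setminus \{v_1,v_2\}}$). So $I$ is squarefree iff no minimal non-face contains both $v_1,v_2$; and if $F \cup \{v_1\}$ and $F \cup \{v_2\}$ are faces while $F \cup \{v_1,v_2\}$ is not, any minimal non-face inside $F \cup \{v_1,v_2\}$ must contain both $v_1$ and $v_2$ (else it would sit inside one of the faces $F \cup \{v_i\}$), giving the stated equivalence.

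For part (c), assume (a) and (b) and write $I = I_{X_1}$. A subset $T \subseteq V_1$ is a face of $X_1$ iff no $\pi(x_R)$ divides $x_T$: when $v \notin T$ this reduces to $T$ being a face of $X$; when $v \in T$, writing $T = T' \cup \{v\}$, it reduces via (b) to $T' \cup \{v_1,v_2\}$ being a face of $X$. With these two face descriptions in hand, I would first verify that each $G_i$ built from a facet $F_i$ of $X$ is a face of $X_1$. The crux is to show that the $G_i$'s are pairwise incomparable, hence each maximal: assume $G_i \subsetneq G_j$ for distinct facets $F_i, F_j$ and case-analyze on which of $v_1, v_2$ each contains. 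In all cases except one, a direct computation gives $F_i \subseteq F_j$, contradicting maximality of $F_i$. The delicate case has $v_1 \in F_i$ only and $v_2 \in F_j$ only; there, applying (b) with $T = F_i \setminus \{v_1\}$ (noting $T \cup \{v_2\} \subseteq F_j$ is a face) forces $F_i \cup \{v_2\}$ to be a face of $X$, again contradicting maximality of $F_i$. Finally every facet $G$ of $X_1$ equals some $G_F$: extend a suitable lift of $G$ to a facet $F$ of $X$ (which, by (a), meets $\{v_1,v_2\}$) and verify $G = G_F$ from the face descriptions, using that $G$ is maximal.

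The main obstacle is the case analysis at the end of part (c), ensuring that distinct $G_i$'s are incomparable. Both hypotheses are essential in this step: (a) rules out facets of $X$ disjoint from $\{v_1,v_2\}$, while (b) is precisely what blocks the ``mixed'' configuration in which one facet uses only $v_1$ and another only $v_2$ to produce a strict containment among the $G_i$'s.
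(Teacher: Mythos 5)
Your proposal is correct. Parts (a) and (b) follow the paper's own argument: (a) reads off the minimal primes $P_F=(x_u : u\notin F)$ of $I_X$ exactly as the paper does, and (b) is the paper's observation that $I$ fails to be squarefree precisely when $I_X$ has a minimal generator $x_{v_1}x_{v_2}x_F$, which is your ``minimal non-face containing both $v_1,v_2$''. Part (c) is where you genuinely diverge: the paper disposes of it in one line by appealing to the geometric set-up preceding the lemma --- $A(X_1)$ is the intersection of $A(X)=\bigcup_i A(F_i)$ with the hyperplane $x_{v_1}=x_{v_2}$ via the embedding $\phi$ of \eqref{eq:regsimp:ae}, and each component $A(F_i)$ meets that hyperplane in exactly the coordinate subspace $A(G_i)$ --- whereas you argue combinatorially from the description of the faces of $X_1$ in terms of images of minimal non-faces, plus a case analysis showing the $G_i$ are pairwise incomparable. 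Both are valid; the geometric route is shorter, but it silently assumes the intersected components are still the maximal ones, which is precisely what your ``delicate case'' ($v_1\in F_i$ only, $v_2\in F_j$ only) verifies using (b), so your version has the merit of making explicit where the squarefree hypothesis is used. One small point you (and the paper) elide in (b): a generator $x_v^2x_{R'}$ of $I$ witnesses non-squarefreeness only if it is a \emph{minimal} generator, i.e.\ if $x_vx_{R'}\notin I$; this does hold, since any minimal non-face producing a generator dividing $x_vx_{R'}$ would have to sit inside the face $R'\cup\{v_1\}$ or $R'\cup\{v_2\}$, but it is worth a sentence.
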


\begin{proof}

  \noindent a. The associated primes of $I_X$ are
  the ideals generated by variables $(x_v)_{v \not \in F}$,
  one such ideal for each facet $F$. The variable difference is a nonzero
  divisor iff it is not in any of these ideals. This means that never
  both $v_1$ and $v_2$ are in such an ideal, or equivalently never both $v_1$ and $v_2$
  are outside of a facet $F$.

  \noindent b. The ideal $I$ is squarefree iff there is no minimal
  generator $x_{v_1}x_{v_2}x_F$ of $I_X$. But having such
  a generator means having faces $F \cup \{v_1 \}$ and
  $F \cup \{v_2\}$ but not a face $F \cup \{v_1, v_2 \}$.
  
  \noindent c. This follows by \eqref{eq:regsimp:ae} above.
\end{proof}

A sequence of linear forms $\ell_1, \ldots, \ell_r$ is a
regular sequence for $S/I_X$ iff for every facet $F$, it cuts down
$A(F)$ successively by one dimension for every $\ell_k$.

\begin{corollary} \label{cor:sepjoin-regseq}
Let $X$ be a simplicial complex on a set $V$.
Let $B$ be a forest on $V$, and denote $B_1, \ldots, B_m$ the trees
in $B$ and $V_i$ the support of $B_i$ for each $i$.
Then $\{ x_v-x_w \mid \{v,w \} \text{ edge of } B\}$ is a regular
sequence for $S/I_X$ iff for each facet $F$ and each $V_i$, at most one
of the vertices of $V_i$ is not in $F$.
\end{corollary}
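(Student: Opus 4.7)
The plan is to invoke the criterion recalled just before the corollary: a sequence of linear forms is regular for $S/I_X$ iff for every facet $F$ the forms, restricted to $L(F)$, remain linearly independent (equivalently, each successively drops the dimension of $L(F)$ by one). Fixing a facet $F$, it therefore suffices to show that the family $\{x_v - x_w : \{v,w\}\in E(B)\}$ is linearly independent as forms on $L(F)$ if and only if each $V_i$ contains at most one vertex outside $F$.

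Since the trees $B_1,\ldots,B_m$ have pairwise disjoint vertex sets $V_i$, the edge-differences coming from distinct trees involve disjoint sets of variables, so their restrictions to $L(F)$ lie in linearly disjoint subspaces of the coordinate ring of $L(F)$. The full family is thus independent on $L(F)$ iff the edge-differences of each tree $B_i$ are independent on $L(F)$ separately. For a single tree $B_i$, the key observation is that the quotient of $\langle x_v\rangle_{v\in V_i}$ by all its edge-differences is one-dimensional: connectedness of $B_i$ forces all the $x_v$ to a common value $t$, while acyclicity guarantees the $|V_i|-1$ differences are linearly independent.

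Restricting to $L(F)$ additionally sets $x_u=0$ for $u\in V_i\setminus F$. If $V_i\subseteq F$, the quotient stays one-dimensional, so the $|V_i|-1$ restricted forms drop the dimension of $\langle x_v\rangle_{v\in F\cap V_i}$ by exactly $|V_i|-1$ and are independent. If instead some $u\in V_i\setminus F$ is present, the relation $x_u=0$ propagates along the tree through the edge-differences to force $t=0$, collapsing the quotient to zero; the rank of the restricted forms is then $|F\cap V_i|=|V_i|-|V_i\setminus F|$, which matches the number $|V_i|-1$ of forms precisely when $|V_i\setminus F|=1$. Combining the two cases, the family is independent on $L(F)$ exactly when $|V_i\setminus F|\le 1$ for every $i$, giving the corollary. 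The main content is the tree-by-tree rank calculation; the reduction to one tree and the invocation of the regularity criterion are routine.
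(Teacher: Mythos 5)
Your proof is correct and takes essentially the same route as the paper's: both invoke the facet-wise criterion stated just before the corollary and compute, tree by tree, the dimension drop on $L(F)$ (equivalently the rank of the restricted forms), distinguishing the cases $V_i \subseteq F$ and $V_i \setminus F$ nonempty, with the failure when $|V_i \setminus F| \geq 2$ giving the converse. The paper phrases the count as cutting down the component $A(F)$ rather than as linear independence of the restricted forms, but this is the same computation in dual form.
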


\begin{proof}
A facet $F$ of $X$ gives the irreducible component $A(F)$ of the algebraic
set associated to $X$. When cutting down $A(F)$
by the sequence of variable differences associated to the edges of $B_i$
we have:
\begin{itemize}
\item If some $u \in V_i$ is not in $F$, the coordinate $x_v = x_u = 0$
for $v \in V_i$. This reduces dimension by $|V_i| - 1$.
\item If $V_i \sus F$, all coordinates $x_v$ for $v \in V_i$ become equal.
This again reduces dimension by $|V_i| - 1$.
\end{itemize}
Hence using the edges of the forest $B$ the linear space
$A(F)$ is cut down to a linear space whose dimension is
$\sum_i (|V_i|-1)$
less than $A(F)$ for each facet $F$ of $X$. But then the set of
variable differences is a regular sequence.

Conversely assume the sequence is regular. If there are $V_i$ and $F$
with $\{ v^\prime,v^{\prime \prime} \} \sus V_i \setminus F$,
then the variable differences $x_v - x_w$ 
associated to edges $\{v,w \}$ in $B_i$ would only give the restrictions
$x_v = 0$ for $v \in V_i \cap F$. This only cuts down dimension
at most by $|V_i| - 2$, contrary to the sequence being regular.
\end{proof}


\section{Stacked simplicial complexes}
\label{sec:triangulation}

Let $X$ be a simplicial complex on a set $A$. In the previous section
we used $V$ for the vertex set of $X$, but in the sequel we reserve $V$ for
the vertex set of the hypertree $T$ associated to the stacked
simplicial complex $X$.
In other words $V$ is an index set for the facets of $X$.

We show that $I_X$ may be successively separated to an ideal $I_{X^\prime}$,
where $X^\prime$ is a stacked simplicial complex of dimension two less
than the number of facets $|V|$. (See Figure~\ref{figfour}
for two examples of such $X^\prime$.)

\subsection{Stacked simplicial complexes and associated
  hypertree}
A facet $F$ of a simplicial complex $X$ is a {\it leaf} if there is a vertex
$v$ of $F$ such that $F$ is the only facet containing $v$. Such a vertex
is a {\it free vertex} of $X$. If $v$ is the only free vertex of
$F$ we say $F$ is {\it stacked} on $X_{-\{v\}}$.
(In general, a {\it free face} of a simplicial complex
is a face that is not a facet and that lies on exactly one facet, see for instance~\cite{Maund}. The term ``leaf'' is quite standard 
in graph theory, and  less common in the setting of simplicial complexes,
but see for instance~\cite{Far}.)

\begin{definition}
  A pure simplicial complex (i.e., where all the facets have the same dimension)
  is {\it stacked} if there is an ordering of its
  facets $F_0, F_1, \ldots, F_k$ such that if $X_{p-1}$ is the simplicial
  complex generated by $F_0, \ldots, F_{p-1}$, then $F_p$ is stacked on
  $X_{p-1}$.
\end{definition}

\begin{remark}
This is a special case of shellable simplicial complexes,
see~\cite[Subsection~8.2]{HeHi}. It is not the same as the notion of
simplicial complex being a tree as in~\cite{Far}, even if the tree
is pure. Rather the notion of stacked simplicial complex
is more general. For instance the triangulation of the heptagon
given in Example~\ref{eks:intro-hept1}, is not a tree in the
sense of \cite{Far}, since removing the triangles~$234$ and~$257$
one has no facet which is a leaf.
\end{remark}

\begin{remark} Stacked simplicial complexes are flag complexes.
  Every minimal nonface is an edge. Equivalently the Stanley--Reisner
  ideal is generated by quadratic monomials.
\end{remark}

A {\it hypergraph} is an ordered pair $H=(V,E)$ where $V$ is a set  and 
$E$ is a collection of subsets of $V$ such that no $e\in E$ is contained in
another $e'\in E$. The elements of $V$ are called
the vertices of $H$ and the elements of $E$ are called the edges of $H$.  
A hypergraph $H$ is a {\it hypertree} if 
\begin{enumerate}
\item[(i)] any two edges intersect in
either one or zero elements, 
\item[(ii)] $H$ is connected, i.e., for any two vertices $v$ and $w$
in $V$ there is 
a sequence $e_1,\dots,e_m$ of edges of $H$ with $v \in e_1$ and $w \in e_m$
and such that for every $i\in\{1,\dots,m-1\}$ one has
$e_i\cap e_{i+1}\ne\emptyset$, and
\item[(iii)] $H$ has no cycle, i.e., no
sequence of distinct vertices $v_0, v_1, \ldots, v_n$ save $v_n = v_0$, with
$n \geq 3$
such that each pair $\{v_{i-1},v_i\}$ is contained in an edge but
no triple $\{v_{i-1},v_i,v_{i+1}\}$ is contained in an edge.
\end{enumerate}
If $T^\prime$ and $T$ are hypertrees on the same vertex set,
$T^\prime$ is a {\it refinement} of $T$ if
\begin{enumerate}
\item[(i)] every edge of $T^\prime$
is contained in an edge of $T$, and 
\item[(ii)] every
 edge of $T$ is a union of edges of $T^\prime$.
 \end{enumerate}

\begin{definition}
  Let $X$ be a stacked simplicial complex with facets $F_v$
  indexed by a set $V$.
  We associate a hypertree
  to $X$ on the vertex set $V$. For each
  codimension-one face $G$, let $e_G = \{ v \in V \mid F_v \supseteq G \}$.
  The edge set of the hypertree is $E = \{ e_G \mid |e_G| \geq 2 \}$,
  the set of those $e_G$ containing at least two facets. 
\end{definition}

The simplicial complex $X$ is a triangulated ball iff its associated
hypertree $T$ is an ordinary tree, \cite[Theorem 11.4]{Bj}.
It can then be realized as a stacked polytope.
Such polytopes are extremal in the following sense: they
have the minimal number of faces, given the number of vertices (see \cite{Bar}).

\begin{obs}
  Let $X$ be a stacked simplicial complex which is a cone with $p$
  vertices in the cone apex. Thus $X$ is a join $X_1 * \Delta_p$
 where $\Delta_p$ is a simplex on $p$ elements and $X_1$ is not a cone.
 Then $X_1$ is also stacked and both $X$ and $X_1$ have
  the same associated hypertree. 
\end{obs}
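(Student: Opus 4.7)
The plan is to verify the two conclusions in turn, after setting up a common index for the facets. Write $W = \{w_1, \ldots, w_p\}$ for the vertex set of the cone apex $\Delta_p$. Since $X = X_1 * \Delta_p$, every facet $F$ of $X$ decomposes uniquely as $F = F' \cup W$ with $F'$ a facet of $X_1$, giving a bijection between the facets of $X$ and those of $X_1$. Thus $V$ serves simultaneously as an index set for the facets of both complexes, with $F_v = F'_v \cup W$ for each $v \in V$.

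For stackedness of $X_1$, I would start from any stacking $F_0, F_1, \ldots, F_k$ of $X$, say $F_p = G_p \cup \{v_p\}$, and first note that the free vertex $v_p$ cannot lie in $W$, since every $w_i$ belongs to every facet of $X$. Hence $v_p \in F'_p$, and setting $G'_p := G_p \setminus W$ gives $F'_p = G'_p \cup \{v_p\}$. The remaining two checks are that $G'_p$ is a face of the subcomplex of $X_1$ generated by $F'_0, \ldots, F'_{p-1}$, and that $v_p$ is not a vertex of that subcomplex; both reduce immediately to the corresponding properties of the stacking of $X$.

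For the hypertrees, I would classify the codimension-one faces of $X$ into two types: either $G = F_v \setminus \{w_i\}$ with $w_i \in W$, or $G = F_v \setminus \{u\}$ with $u \in F'_v$. For the first type, $F_{v'} \sups G$ forces $F'_{v'} \sups F'_v$, hence $F'_{v'} = F'_v$ by equidimensionality, and so $e_G = \{v\}$ contributes no edge. For the second type, setting $G' := F'_v \setminus \{u\}$, the relation $F_{v'} \sups G$ is equivalent to $F'_{v'} \sups G'$, giving a bijection between the edge-producing codimension-one faces of $X$ and the codimension-one faces of $X_1$ under which the sets $e_G$ and $e_{G'}$ coincide; the two hypertrees therefore agree. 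The main obstacle is only the bookkeeping of the two types of codimension-one faces; there is no deep technical step, since everything follows from the fact that each $w_i$ lies in every facet of $X$.
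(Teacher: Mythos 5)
Your proof is correct, and it is the natural direct verification: the paper states this as an Observation without supplying any proof, so there is nothing to compare it against. Both halves of your argument are sound --- the key points (the apex vertices lie in every facet, hence are never free and never produce an edge $e_G$ with $|e_G|\geq 2$; and stripping $W$ from a stacking order of $X$ yields a stacking order of $X_1$) are exactly what is needed.
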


    \subsection{Separating stacked simplicial complexes}
    \begin{lemma} \label{lem:tri-sep}
      Let\/ $X$ be a stacked simplicial complex of dimension $d$
      with hypertree~$T$.
      If\/ $T$ has $\geq d+3$ vertices, then using the procedure of
      Corollary~\ref{cor:sep-flag}, $X$ may be separated to
      a simplicial complex $X^\prime$ which is also stacked, and whose
      hypertree $T^\prime$ is a refinement of\/ $T$.
    \end{lemma}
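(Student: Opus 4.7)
The plan is to select a vertex $v\in V(X)$ satisfying the hypotheses of Corollary~\ref{cor:sep-flag}, perform the separation to obtain $X'$, and then verify that $X'$ is stacked with hypertree $T'$ refining $T$.

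For each $v\in V(X)$ let $T_v:=\{w\in V(T):v\in F_w\}$, a connected sub-hypertree of $T$ by a standard property of stacked complexes. The key structural observation is the following: if $u,u'\in V(T)\setminus V(T_v)$ lie in different connected components of $T\setminus T_v$, then the facets $F_u$ and $F_{u'}$ share no vertex of $X$ outside $N(v)$. Indeed, a common vertex $x\notin N(v)$ would force $u,u'\in T_x$, and since $T_x$ is itself a subtree the $T$-path from $u$ to $u'$ would lie inside $T_x$ and hence pass through $T_v$, placing $x$ in some $F_w$ with $w\in T_v$ and so $x\in N(v)$, a contradiction. It therefore suffices to produce $v$ for which $T\setminus T_v$ has at least two connected components each contributing a surviving vertex to $X_{-N(v)}$; the required decomposition $X_{-N(v)}=X_1\cup X_2$ on disjoint vertex sets follows.

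To produce such a $v$, I use $|V(T)|\geq d+3$ and split into cases. If $T$ contains a hyperedge $e$ with $|e|\geq 3$, choose $v$ to be either a free vertex of $F_w$ for a suitable $w\in e$, or a vertex of the common codim-one face $G_e$ that belongs only to facets of $e$; in either case the remaining $\geq 2$ facets of $e$ contribute pairwise non-adjacent isolated vertices to $X_{-N(v)}$. If $T$ is an ordinary tree, observe that an ordinary star hypertree has at most $d+2$ vertices, since each of its $k-1$ edges at the center facet $F_c$ corresponds to one of the $d+1$ codim-one faces of $F_c$ and these must be distinct. Hence the hypothesis forces $T$ not to be a star, so $T$ has diameter $\geq 3$ and contains a path $w_0$--$w_1$--$w_2$--$w_3$ with $w_1,w_2$ both of degree $\geq 2$ in $T$. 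Then a careful choice of $v\in F_{w_1}\cap F_{w_2}$ makes $T_v$ stay ``in the middle'' of the path, so $T\setminus T_v$ contains at least the two components on the $w_0$- and $w_3$-sides.

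With $v$ fixed, Corollary~\ref{cor:sep-flag} produces $X'$. To check $X'$ is stacked, process the facets of $X$ in their original stacking order: each facet becomes a facet of $X'$ by gaining $v_1$, $v_2$, or both as a new vertex, and attaches to the preceding sub-complex along a codim-one face of the form $G\cup\{v_1,v_2\}$ or $G\cup\{v_i\}$ obtained from the original attaching face $G$. Every codim-one face of $X'$ has this form for some codim-one face $G$ of $X$, so each edge of $T'$ is contained in the corresponding edge of $T$, and each edge of $T$ decomposes as the disjoint union of the one or two edges of $T'$ it produces (according to how the facets on $G$ distribute between the $V_1$- and $V_2$-sides). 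Hence $T'$ refines $T$.

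The main obstacle is the existence argument for $v$ in the ordinary-tree case: one must use $|V(T)|\geq d+3$ precisely, to guarantee some $v\in F_{w_1}\cap F_{w_2}$ whose $T_v$ is short enough not to reach either end $w_0$ or $w_3$ of the chosen path. This requires a careful combinatorial analysis of how $T_v$ grows from a seed in the shared codim-one face through the stacking.
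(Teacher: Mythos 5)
Your overall strategy differs from the paper's: you try to locate a separating vertex $v$ directly from the combinatorics of the hypertree $T$, whereas the paper inducts on the dimension $d$ — it passes to the link $Y=\lk_v X$ of a vertex chosen from the last two facets of a stacking order, separates $Y$ at some $v'$ by the induction hypothesis, and then checks in three cases that $v'$ also separates $X$. Your reduction of the problem to finding $v$ with $X_{-N(v)}$ disconnected, via the subtrees $T_v$, is sound in outline, but both of your existence arguments for $v$ have genuine gaps.

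In the hyperedge case the proposed vertices need not exist. Take $d=2$ and the stacking $123,\ 124,\ 125,\ 236,\ 147,\ 158$ (six facets, so the hypothesis $\geq d+3$ holds); the unique hyperedge of size $\geq 3$ is $e=\{0,1,2\}$ with $G_e=\{1,2\}$. None of $123,124,125$ has a free vertex (each of $3,4,5$ lies in a later facet), and each of $1,2$ lies in a facet outside $e$, so neither of your two choices is available — yet the lemma holds (here $v=3$ works, and it is of neither form). Moreover, even when your second choice exists it can fail outright: for the $1$-dimensional star $K_{1,n}$, $n\geq 4$, the center $c$ lies in $G_e$ and only in facets of $e$, but $N(c)$ is all of $V(X)$ and $X_{-N(c)}=\emptyset$, so no separation results; and the stated mechanism (``isolated vertices'') is not what happens even in good cases — the surviving apexes of the other facets of $e$ need not be isolated, only in distinct components. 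In the ordinary-tree case you only assert that ``a careful choice'' of $v\in F_{w_1}\cap F_{w_2}$ keeps $T_v$ in the middle, and you yourself flag this as unresolved. It genuinely is the crux: in the paper's heptagon example the relevant shared face is $\{2,5\}$ and only $v=5$ works ($X_{-N(2)}$ is connected), so the existence of a good $v$ in that codimension-one face requires a real argument — precisely the work the paper's induction through the link is designed to do. As it stands the proof is incomplete at its central step.
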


    \begin{proof} Note that if $d = 0$, then
      $X$ is a collection of $ \geq 3$ vertices and the hypertree $T$ has one
      edge, the set of all facet indices $V$. By Corollary
      \ref{cor:sep-flag}, $X$ may be separated.

      Let $d \geq 1$ and $F_0, \dots, F_k$ be a stacking order for $X$. The facet
      $F_k$ is stacked on some previous facet $F_p$, with $p<k$.
      Let $v$ be the vertex of $F_p\setminus F_k$, $w$ the vertex of
      $F_k \setminus F_p$, and $G = F_p \cap F_k$. 
      Then $X_{-N(v)}$ contains $\{w \}$ as a component. If there are other
      components, we may apply Corollary \ref{cor:sep-flag}.

      Suppose then $\{w \}$ is the only component. Then $X_{-\{w\}}$ must be
      a cone
    over $v$. Let $Y$ be the link $\lk_X v$. It is stacked, of dimension
    $d-1$ and has $\geq d+2$ facets. By induction we may use Corollary
    \ref{cor:sep-flag} and separate $Y$, using an element $v^\prime$,
    to $Y^\prime$ whose tree is a
    refinement of that of $Y$. Let
    \[ V_1 \cup V_2 = (V \setminus \{v,w \}) \setminus N_Y(v^\prime)\]
    be the partition given in Corollary \ref{cor:sep-flag}. 

\medskip
\noindent 1. Suppose $N(v^\prime)$ (the neighborhood considered in $X$)
contains $w$.
Then $X_{-N(v^\prime)}$ has two components, supported on respectively $V_1$
and $V_2$, and we may apply Corollary
\ref{cor:sep-flag}.

\noindent 2. Suppose  $N(v^\prime)$ contains $G$ and not $w$.
Then $X_{-N(v^\prime)}$ has components $\{w \}$ together with  at least one
other component and we may again apply Corollary~\ref{cor:sep-flag}.

\noindent 3. Suppose $N(v^\prime)$ does not contain $w$ nor $G$.
Then there is $u \in G$ not in $N(v^\prime)$. So $u$ is in, say $V_1$.
Then $X_{-N(v^\prime)}$ may be written as a disjoint union $X_1  \cup X_2$,
with $X_2$ supported on $V_2$ and $X_1$ supported on $V_1 \cup \{w\}$.
Again we may apply Corollary~\ref{cor:sep-flag}.

Let us now show that $T^\prime$ is a refinement of $T$.
Let $G$ be a codimension-one face of $X$ contained in two or more facets
$F_i$ for $i \in D$, so  $D$ is an edge in $T$. Denote by $x_v$ the
variable used in the separation.
\begin{itemize}
\item If $G$ contains $v$, write $G = G^0 \cup \{ v\}$, and then $F_i$ is $F_i^0 \cup \{v\}$.
  Then $G^0 \cup \{v_1, v_2\}$ is a codimension-one face in
  facets $F_i^0 \cup \{ v_1, v_2 \}$ for $i \in D$. So $D$ is still
  an edge in $T^\prime$.
\item Suppose $G$ does not contain $v$. Let $D_1 \sus D$ index all
  $F_i$ in $\cF_1$ containing $G$ and similarly define $D_2$. 
  There might also be a facet $F = G \cup \{v\}$ containing $G$, in which
  case we extend both $D_1$ and $D_2$ with the index of this
  facet. Then $D_1$ and $D_2$ are edges of $T^\prime$ and they have at most
  one vertex in common.
\end{itemize}
\end{proof}

\begin{proposition} \label{pro:tria-unik}
  Let $X$ be a stacked simplicial complex of dimension $d$ which is
  {\em not} a cone,
  and let\/ $T$ be the associated hypertree.
  \begin{itemize}
  \item[a.] $T$ has $\geq d+2$ vertices,
\item[b.] If\/ $T$ has an edge of cardinality $\geq 3$, then $T$ has $\geq d+3$
  vertices
\item[c.] If\/ $T$ has $\geq d+3$ vertices, $X$ may be separated to a simplical
  complex $X^\prime$ whose tree $T^\prime$ is a refinement of\/ $T$.
\item[d.] If\/ $T$ is an (ordinary) tree with $d+2$ vertices, then
  $X$ is inseparable and the isomorphism class of $X$ is uniquely determind
  by $T$.
\end{itemize}
\end{proposition}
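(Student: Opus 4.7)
The plan is to track the intersection $C_p := \bigcap_{i \leq p} F_i$ of the first $p+1$ facets along a stacking order $F_0,\dots,F_{n-1}$ of $X$, and to exploit the rigidity this forces when $n=d+2$.

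For (a) and (b), each stacking step writes $F_{p+1} = G_{p+1}\cup\{v_{p+1}\}$ attaching via the codim-one face $G_{p+1}$ to some earlier $F_j = G_{p+1}\cup\{u_{p+1}\}$. Since $C_p\subseteq F_j$, we have $C_p\setminus\{u_{p+1}\}\subseteq G_{p+1}\subseteq F_{p+1}$, so $|C_{p+1}|\geq |C_p|-1$. Iterating from $|C_0|=d+1$ yields $|C_{n-1}|\geq d+2-n$; since $X$ is not a cone this intersection is empty, forcing $n\geq d+2$, which proves (a). For (b), if $T$ has an edge $e_G$ of cardinality $\geq 3$, pick the first three facets $F_{i_0},F_{i_1},F_{i_2}$ in the stacking order that contain the codim-one face $G$. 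When $F_{i_2}$ is added, $G$ already lies in $X_{i_2-1}$, so by the single-attachment property of stacking $G$ must be the attachment face, giving $G\subseteq F_{i_2}$. Moreover $F_{i_0}\cap F_{i_1}=G$, so $C_{i_2-1}\subseteq G\subseteq F_{i_2}$ and this step contributes no decrease to $|C_p|$. The bound then sharpens to $|C_{n-1}|\geq d+3-n$, giving $n\geq d+3$. Part (c) is exactly Lemma~\ref{lem:tri-sep}, to be invoked directly.

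For the uniqueness half of (d), I would induct on $d$, with base $d=0$ trivial. When $n=d+2$ and $X$ is not a cone, the total drop $d+1$ of $|C_p|$ over $d+1$ steps forces each step to decrease $|C_p|$ by exactly one, so $|C_p|=d+1-p$ throughout. Pick a leaf $v$ of $T$ and arrange the stacking so $F_v$ is last; then $X^- := X\setminus\{F_v\}$ has $d+1$ facets, hypertree $T^-=T\setminus\{v\}$, and cone apex set $C(X^-)=C_d$ of size one, say $\{c\}$. Write $X^-=Y\ast\{c\}$ with $Y=\lk_c X^-$ of dimension $d-1$; then $Y$ is not a cone and inherits the hypertree $T^-$ on $(d-1)+2$ vertices (the shared codim-one faces of $X^-$ all contain $c$, by a dimension count, so they correspond bijectively to codim-one faces of $Y$). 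By induction $Y$, hence $X^-$, is determined up to isomorphism by $T^-$. Finally $F_v=F_w\setminus\{u\}\cup\{w'\}$ where $w$ is the unique neighbor of $v$ in $T$; the condition $C(X)=\emptyset$ forces $u=c$, so $F_v$, and thus $X$, is determined.

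For the inseparability half of (d), I would use that for flag $X$ the criterion of Proposition~\ref{pro:sepjoin-critsep} is equivalent to the disconnection condition of Corollary~\ref{cor:sep-flag}: a simple separation at a vertex $v$ of $X$ requires partitioning the vertices outside $N(v)$ into two nonempty sets $V_1\sqcup V_2$ with no edge of $X$ between them. From the uniqueness description, the vertices of $X$ pair up as $\{u_e,v_e\}$ indexed by the edges $e$ of $T$, where $u_e\in F_w$ iff $w$ lies on a designated side of $T\setminus\{e\}$ and $v_e$ on the other. For each choice of $v$ (say $v=u_e$; the case $v=v_e$ is symmetric), I would compute the complement of $N(v)$ explicitly and show that the partner $v_e$ is a hub adjacent in the $1$-skeleton of $X$ to every other vertex in that complement, forcing connectedness and ruling out any valid partition. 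The principal obstacle is this neighborhood analysis, which hinges on how paths in $T$ encode facet memberships in $X$.
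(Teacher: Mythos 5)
Your treatment of (a), (b), (c) and the uniqueness half of (d) is correct and follows essentially the paper's own route: the estimate $|C_{p+1}|\geq |C_p|-1$ on the running intersection of facets along the stacking order, the observation that a step whose attaching face has already occurred contributes no drop (which sharpens the bound for (b)), Lemma~\ref{lem:tri-sep} quoted for (c), and, for the uniqueness in (d), peeling off the last facet, noting by (a) that what remains is a cone with a single apex, passing to the link of that apex and inducting; your final step (that $C(X)=\emptyset$ forces $u=c$) matches the paper's identification of the last facet. The one genuine divergence is the inseparability half of (d). The paper's proof of this proposition does not argue it at all --- in the paper it is only recovered later, once $X$ is identified with the simplicial complex of $I(T)$, which \cite{AFL} shows is a separated model. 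You instead propose a direct argument: for a flag complex, Proposition~\ref{pro:sepjoin-critsep} (equivalently the converse direction of Corollary~\ref{cor:sep-flag}) reduces separability at a vertex $v$ to splitting the vertices of $X_{-N(v)}$ into two nonempty classes with no edge of $X$ between them, and you claim the partner $v_e$ of $v=u_e$ is adjacent to every other vertex of $X_{-N(u_e)}$. That claim does check out: in the model the non-neighbours of $(e,1)$ are $(e,0)$ together with one ``outward-labelled'' vertex $(h,\ast)$ for each edge $h$ on the tail side of $e$, and each such vertex is adjacent to $(e,0)$ because the unique non-edge joining $\{(e,0),(e,1)\}$ to $\{(h,0),(h,1)\}$ involves $(e,1)$, not $(e,0)$. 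So your plan closes and has the merit of keeping part (d) self-contained without appealing to the later identification with $I(T)$; but as written you defer exactly this neighbourhood computation, so that half of (d) remains a program rather than a completed proof.
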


\begin{example}\label{ex:stackedsimpcomplfourfacets}
 For $d = 2$, Figure~\ref{figfour} 
 \begin{figure}
\begin{center}
\begin{tikzpicture}[dot/.style={draw,fill,circle,inner sep=1pt},scale=1.1]
\draw [help lines,white] (0,-.5) grid (4,2);
\draw [thick, fill=gray, opacity=0.3] (0,0)--(3,0)--(3.75,1.3)--(.75,1.3)--cycle;
\draw [thick,red] (.75,.43)--(1.5,.87)--(2.25,.43)--(3,.87);
\draw [thick] (0,0)--(3,0)--(3.75,1.3)--(.75,1.3)--cycle;
\draw [thick] (.75,1.3)--(1.5,0)--(2.25,1.3)--(3,0);
\fill (0,0)  circle (0.08);
\fill (1.5,0)  circle (0.08);
\fill (3,0)  circle (0.08);
\fill (.75,1.3)  circle (0.08);
\fill (2.25,1.3)  circle (0.08);
\fill (3.75,1.3)  circle (0.08);
\fill[red] (.75,.43)  circle (0.08);
\fill[red] (1.5,.87)  circle (0.08);
\fill[red] (2.25,.43)  circle (0.08);
\fill[red] (3,.87)  circle (0.08);
\end{tikzpicture}
\qquad\quad
\begin{tikzpicture}[dot/.style={draw,fill,circle,inner sep=1pt},scale=1.1]
\draw [help lines,white] (0,0) grid (3,2);
\draw [thick, fill=gray, opacity=0.3] (0,0)--(3,0)--(1.5,2.6)--cycle;
\draw [thick,red] (.75,.43)--(1.5,.87)--(2.25,.43);
\draw [thick,red] (1.5,.87)--(1.5,1.73);
\draw [thick] (0,0)--(3,0)--(1.5,2.6)--cycle;
\draw [thick] (.75,1.3)--(1.5,0)--(2.25,1.3)--cycle;
\fill (0,0)  circle (0.08);
\fill (1.5,0)  circle (0.08);
\fill (3,0)  circle (0.08);
\fill (.75,1.3)  circle (0.08);
\fill (2.25,1.3)  circle (0.08);
\fill (1.5,2.6)  circle (0.08);
\fill[red] (.75,.43)  circle (0.08);
\fill[red] (1.5,.87)  circle (0.08);
\fill[red] (2.25,.43)  circle (0.08);
\fill[red] (1.5,1.73)  circle (0.08);
\end{tikzpicture}
\caption{}
\label{figfour}
\end{center}
\end{figure}
 shows the two stacked
  simplicial complexes of dimension~$2$ with four facets. The corresponding
  trees are also drawn in red.
\end{example}

\begin{proof}[Proof of Proposition~\ref{pro:tria-unik}]
  \noindent {\em a,b.} Let $F_0, \ldots, F_k$ be a stacking order of facets.
  Let $X_p$ be the complex generated by $F_0, \ldots, F_p$. 
  Let   $C_p = \cap_{i = 0}^p F_i$ and $G_p$ be the codimension-one face of $F_p$
  which attaches it to $X_{p-1}$. Then for $p \geq 1$,
  $C_p=  \cap^p_{i = 1} G_i$ and $C_{p} = C_{p-1}  \cap G_{p}$.
  Note $G_{p}$ has codimension one in $F_{p-1}$ and $C_{p-1} \sus F_{p-1}$.
  But then $C_p$ has cardinality
  \[ |C_p| = |C_{p-1} \cap G_p| \geq |C_{p-1}| - 1. \]
  Since $|C_0| = d+1$ we get $|C_p| \geq d+1-p$ and so if $X$ is not
  a cone, $k \geq d+1$.  If $T$ has en edge of cardinality $\geq 3$,
  some $G_p$ equals some $G_r$ for $r < p$. Then
  $C_{p-1} \sus G_r \sus G_p$ and we get $C_{p-1} = C_p$. Thus
  $|C_q| \geq d+2-q$ for $q \geq p$, and so if $X$ is not a cone, $k \geq
  d+2$.

  \noindent {\em c.} This is shown in Lemma \ref{lem:tri-sep}.

  \noindent {\em d.} Let $X$ have associated tree $T$. Label the vertices of $T$ with
  $\{0,1, \ldots, d+1\}$.
  We assume the labeling is such that the
  induced subgraph on $[0,p]$ is always a tree for $p = 0, \ldots, d+1$.
  Then the corresponding ordering $F_0, F_1, \ldots,
  F_{d+1}$ of the facets of $X$ is a stacking order.

  Let $Y$ be another stacked simplicial complex with tree $S$
  isomorphic to $T$. Transferring the labeling from $T$, we get
  a stacking order $G_0, G_1, \ldots, G_{d+1}$ of the facets of $Y$.
  Let
\begin{eqnarray*}  F_{d+1}\setminus F_\ell = \{v \}, & \quad & 
G_{d+1} \setminus G_\ell = \{w \},
\end{eqnarray*}
where $\ell<d+1$ is such that $F_{d+1}$ is stacked on $F_\ell$.
The following restrictions are cones by part a, since they have $\leq d+1$
vertices
\[  X_{-\{v\}} = X^\prime * \{v^\prime\},  \quad
    Y_{-\{w\}} = Y^\prime * \{w^\prime \} \]
  and $X^\prime $ and $Y^\prime$ are not cones
  (since $X$ and $Y$ are not cones). Their trees are obtained
  from $T$ and $S$ by removing the vertices labeled $d+1$.
  The $F_i^\prime = \lk_{F_i} v^\prime$ for $i = 0,1,\ldots, d$
  form a stacking order for $X^\prime$
  and similarly the $G_i^\prime = \lk_{G_i}w^\prime$ form
  a stacking order for $Y^\prime$.

  By induction there is a bijection between $V \setminus \{v,v^\prime\}$
  and $W \setminus \{w, w^\prime \}$ sending the facet
  $F_i^\prime$ of $X^\prime$ to the facet $G_i^\prime $ of $Y^\prime$.
  Extend this to a bijection between $V$ and $W$  by
  $v \mapsto w, \, \, v^\prime \mapsto w^\prime$.
  Then the facet $F_i$ is sent to the facet $G_i$ for $i = 0, \ldots, d$.

  So consider the facets $F_{d+1}$ and $G_{d+1}$.
  Let the vertex $(d+1)$ of $T$ be attached to vertex $p \leq d$.
  So $F_{d+1}$ is attached by the codimension-one face $F_{d+1} \cap F_p$.
  But this is $F_{d+1} \setminus \{v\}$ and does not contain $v^\prime$
  ($F_{d+1}$ does not contain $v^\prime$ since $X$ is not a cone).
  So this codimension-one face is $F_p^\prime$. Similarly $G_{d+1}$ is
  attached to $G_p^\prime = G_{d+1} \setminus \{w\}$.
  Since $F_p^\prime$ is sent to $G_p^\prime$, the facet $F_{d+1}$ is sent
  to $F_{d+1}$. 
\end{proof}
 
\section{Trees and the associated separated model}
\label{sec:tree}

Given a tree $T$ we define the ideal $I(T)$. These ideals are the
separated models of stacked simplicial complexes.

Let $T$ be a tree whose set of vertices is $V$. Let $E = E(T)$ be
its set of edges. The {\it incidence relation} $\iC \sus E \times V$ is
the set of pairs $(e,v)$ such that $v \in e$. It comes
with a natural involution $\tau : \iC \pil \iC$ sending
$(e,v) \mapsto (e,w)$ where $e = \{v,w\}$.


For $v,w \in V$, denote by $vTw$ the unique path from $v$ to $w$
$$\begin{tikzpicture}[dot/.style={draw,fill,circle,inner sep=1pt},scale=.9]
\fill (0,0)  circle (.08);
\fill (1,0)  circle (.08);
\fill (2.5,0)  circle (.08);
\fill (3.5,0)  circle (.08);
\draw[very thick] (0,0)--(1.3,0);
\draw[very thick,dotted] (1.3,0)--(2.2,0);
\draw[very thick] (2.2,0)--(3.5,0);
\coordinate [label=above: $v$] (v) at (0,0);
\coordinate [label=below: $e$] (e) at (.5,0);
\coordinate [label=below: $f$] (f) at (3,0);
\coordinate [label=above: $w$] (w) at (3.5,0);
\end{tikzpicture}$$
and let $e,f$ be the edges incident to respectively $v,w$ on this path.
For a set~$A$ denote by $(A)_2$ the set of
subsets $\{a_1,a_2\}$ of cardinality~$2$.
From the directed tree~$T$ on $V$, we get a map
\begin{eqnarray*} \label{eq:tree:tomapA}
 \Psi:  (V)_2 & \pil & (\iC)_2 \\
\notag   \{v,w \} & \mapsto & \{(e,v),
                       (f,w) \}.
\end{eqnarray*}

For a graph $G$ on $V$ those vertices that are incident to
an edge of $G$ are called the {\it vertices of\/ $G$}.
The edges $\Psi(E(G))$ give a graph $\Psi G$ whose vertices are $\iC$. 

\begin{itemize}
\item 
If $G_1$ and $G_2$ have disjoint vertex sets, the same holds for
$\Psi G_1$ and $\Psi G_2$. 
\item If $G$ is a forest, then $\Psi G$ is a forest, since a cycle in $\Psi G$
  must come from a cycle in $G$.
\end{itemize}

The following is a basic object in this article.

\begin{definition} \label{def:tree-S}
  Let $\Spol$ be the polynomial ring whose variables are indexed by
  the incidence relation $\iC$. The {\it tree ideal\/ $I(T)$} in $\Spol$
  associated to the tree
  $T$ is the edge ideal of $\Psi T$. It is generated by
  the monomials $m_{v,w} = x_{e,v}x_{f,w}$, one monomial for each pair
  of distinct vertices $v,w$ in $V$.
  The edges $e$ and $f$ are incident to $v$ and $w$, respectively, 
  on the path $vTw$. 
\end{definition}

These tree ideals are introduced in \cite[Section 5]{AFL} (but
in a slightly less conceptual setting by indexing
the variables by $E \times \{0,1\}$). They are
shown to be all the possible separated models for the second power
$(x_e \, | \, e\in E(T))^2$
of the irrelevant maximal ideal  in the polynomial ring $k[x_e]_{ e\in E(T)}$
whose variables are indexed by the edges of $T$.
In particular the ideals $I(T)$ are Cohen--Macaulay and their graded Betti
numbers are precisely those of the graded free resolution of the
second power $(x_e \, | \, e \in E(T))^2$ of the graded maximal ideal
of $k[x_e]_{ e\in E(T)}$.


The following is given in \cite[Section 5]{AFL}.

\begin{lemma} \label{lem:tree:facets}
  The facets of the simplicial complex associated to the Stanley--Reisner 
  ideal\/ $I(T)$ are
  \[ F_v = \{ (e,w) \in \iC \mid w \text{ vertex on } e \text{ closest
      to } v \}, \]
  one facet for each vertex $v \in V$. The cardinality of these facets is
  then the number of edges of\/ $T$.
\end{lemma}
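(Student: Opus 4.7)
The plan is to verify that each $F_v$ is a face, that it is maximal, and that every facet of the Stanley--Reisner complex of $I(T)$ arises in this way.

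First I would clarify the notation: for an edge $e$ of $T$ and a vertex $v$ not incident to $e$, I take $e_{\TO}(v)=1$ to mean that $v$ lies in the component of $T\setminus e$ containing the head of $e$ (this extends the original definition, which already handles the case when $v$ is an endpoint of $e$). A direct count then gives $|F_v|=|E(T)|$, and for distinct $v,w$ any edge on the unique path $vTw$ separates $v$ and $w$ into opposite components, so $F_v\neq F_w$. Hence one has $|V|$ distinct candidate facets of the required cardinality.

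Next I would show $F_v$ is a face and is maximal. Maximality is automatic: adjoining any element $(e,1-e_{\TO}(v))$ creates the forbidden pair $\{(e,0),(e,1)\}$ arising from $m_{u,u'}$ where $u,u'$ are the two endpoints of $e$. To see $F_v$ is a face, suppose a generator $m_{v',w'}=x_{e,e_{\TO}(v')}\,x_{f,f_{\TO}(w')}$ has both its variables in $F_v$. The case $e=f$ is immediate since $F_v$ cannot contain both $(e,0)$ and $(e,1)$, so assume $e\neq f$. Then $v$ lies both in the component $C_{v'}$ of $T\setminus e$ containing $v'$ and in the component $C_{w'}$ of $T\setminus f$ containing $w'$. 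Since $e$ and $f$ both lie on the path $v'Tw'$, the graph $T\setminus\{e,f\}$ has exactly three components, of which $C_{v'}$ and $C_{w'}$ are two distinct ones; hence $C_{v'}\cap C_{w'}=\emptyset$, a contradiction.

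Finally I would show every face $F$ is contained in some $F_v$. Define $\phi\colon E(F)\to\{0,1\}$ by requiring $(e,\phi(e))\in F$, which is single-valued since $F$ avoids the pair $\{(e,0),(e,1)\}$; and let $T_e^{\phi(e)}$ be the component of $T\setminus e$ corresponding to the label $\phi(e)$. These are subtrees of $T$, and any vertex $v\in\bigcap_{e\in E(F)} T_e^{\phi(e)}$ satisfies $F\sus F_v$. The main obstacle is verifying pairwise non-emptiness of this family so that the Helly property for subtrees of a tree applies: given $e_1,e_2\in E(F)$ with $u_i$ the endpoint of $e_i$ inside $T_{e_i}^{\phi(e_i)}$, if $u_1=u_2$ the intersection contains $u_1$, and if $u_1\neq u_2$ then an empty intersection would force the path $u_1Tu_2$ to begin with $e_1$ and end with $e_2$ (since $u_2$ lies in the opposite component of $T\setminus e_1$, and symmetrically), producing the forbidden monomial $m_{u_1,u_2}=x_{e_1,\phi(e_1)}\,x_{e_2,\phi(e_2)}$ inside~$F$, a contradiction.
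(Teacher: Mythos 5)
Your proof is correct and complete. Note that the paper itself gives no argument for this lemma --- it is quoted from \cite[Section 5]{AFL} --- so there is no internal proof to compare against; judged on its own, your write-up does everything that is needed. Two points are worth highlighting. First, your extension of $e_{\TO}(v)$ to vertices not incident to $e$ (namely $e_{\TO}(v)=1$ iff $v$ lies in the head-side component of $T\setminus e$) is not a cosmetic choice but genuinely necessary for the statement of the lemma to parse: with the paper's literal definition, $F_v$ would contain $(e,0)$ for every non-incident edge, and one checks on Example~\ref{eks:intro-hept2} that this fails to be a face (e.g.\ $F_1$ would contain both $(a,1)$ and $(c,0)$, which is the nonface coming from $m_{14}$). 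Your reading is the one consistent with Figure~\ref{figtwo}. Second, the structure of the argument --- $F_v$ is a face because the two separating edges of a path put $v$ into disjoint components; $F_v$ is maximal because each edge already contributes one of $(e,0),(e,1)$; and every face sits inside some $F_v$ by the Helly property for subtrees, with pairwise intersection guaranteed exactly by the generators $m_{u_1,u_2}$ --- is clean and self-contained. The only place where the exposition is slightly compressed is the claim that $C_{v'}$ and $C_{w'}$ are two of the three components of $T\setminus\{e,f\}$: strictly, $C_{v'}$ is a component of $T\setminus e$, and one should add the one-line observation that $f$ lies entirely on the far side of $e$ from $v'$, so that deleting $f$ as well does not change the component containing $v'$. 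This is immediate and does not affect correctness.
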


\begin{corollary}
  The ideal $I(T)$ defines the unique non-cone
  stacked simplicial complex with tree~$T$ of dimension $|E|-1$ with
  $|E|+1$ vertices, given in Proposition~\ref{pro:tria-unik}d.
  \end{corollary}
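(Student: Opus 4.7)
The plan is to verify that the simplicial complex $X$ associated to $I(T)$ satisfies every hypothesis of Proposition~\ref{pro:tria-unik}d, from which the uniqueness statement is immediate. By Lemma~\ref{lem:tree:facets} the facets $F_v$ are indexed by $V(T)$ and each has cardinality $|E|$, so $X$ is pure of dimension $d = |E|-1$ and $T$ has $d+2 = |E|+1$ vertices, matching the hypothesis of~3.6d. What remains is to show that $X$ is stacked, that its associated hypertree is $T$, and that $X$ is not a cone.

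The central step is the intersection computation
\[
|F_v \cap F_w| \;=\; |E| - d_T(v,w),
\]
where $d_T$ denotes distance in $T$. This is checked edge by edge: for any $f \in E(T)$, deletion of $f$ partitions $V(T)$ into two components, and straight from the definition of $e_\TO$ one sees that the $f$-coordinates of $F_v$ and $F_w$ coincide exactly when $v,w$ sit in the same component, i.e., exactly when $f$ does not lie on the path $vTw$. Consequently $F_v$ and $F_w$ share a codimension-one face iff $v,w$ are adjacent in $T$, and any such shared face is contained in exactly the two facets $F_v$ and $F_w$ (no third facet can differ from $F_v$ in only the one coordinate indexed by the edge $\{v,w\}$, since a second such would produce a triangle in~$T$). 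This identifies the associated hypertree of $X$ with $T$.

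To stack the facets, root $T$ at any vertex $v_0$ and take a depth-first enumeration $v_0, v_1, \ldots, v_n$ of $V(T)$, so that every $v_p$ with $p\geq 1$ has a unique predecessor-neighbour $v_{q(p)}$ joined by an edge $e_p$. The unique element $w_p$ of $F_{v_p}\setminus F_{v_{q(p)}}$ lies only in those facets $F_u$ with $u$ on the same side of $e_p$ as $v_p$, and such $u$ are descendants of $v_p$ in the rooted tree and hence do not appear among $v_0,\ldots,v_{p-1}$; so $w_p \notin X_{p-1}$, and $F_{v_p}$ is stacked on $X_{p-1}$ along the codimension-one face $F_{v_p}\cap F_{v_{q(p)}}$. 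The complex is not a cone, since for any vertex $(e,\epsilon)$ of $X$ the facets $F_v$ with $v$ on the opposite side of $e$ do not contain it. Proposition~\ref{pro:tria-unik}d then yields the corollary. The principal obstacle is the careful translation of the $e_\TO$ data into head-side/tail-side partitions of $T$ per edge; once that dictionary is fixed, the intersection formula and the subsequent stacking and hypertree identification are essentially forced.
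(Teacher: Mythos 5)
Your argument is correct and is exactly the verification the paper leaves implicit: the corollary is stated without proof, as a consequence of the facet description in Lemma~\ref{lem:tree:facets} together with Proposition~\ref{pro:tria-unik}d. Your intersection formula $|F_v\cap F_w|=|E|-d_T(v,w)$ (an edge $f$ contributes to the intersection iff it is off the path $vTw$), the resulting identification of the associated hypertree with $T$, the depth-first stacking order, and the non-cone check supply all the hypotheses needed to invoke the uniqueness of Proposition~\ref{pro:tria-unik}d.
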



\medskip
A variation of the map $\Psi$ above is
$\oPsi = (\tau)_2 \circ \Psi$ where
$(\tau)_2: (\iC)_2 \pil (\iC)_2$ is derived from the involution
$\tau$ of $\iC$. 
Considering the path between $v$ and $w$
\[\begin{tikzpicture}[dot/.style={draw,fill,circle,inner sep=1pt},scale=1.1]
\fill (0,0)  circle (.08);
\fill (1,0)  circle (.08);
\fill (2.5,0)  circle (.08);
\fill (3.5,0)  circle (.08);
\draw[very thick] (0,0)--(1.3,0);
\draw[very thick,dotted] (1.3,0)--(2.2,0);
\draw[very thick] (2.2,0)--(3.5,0);
\coordinate [label=above: $v$] (v) at (0,0);
\coordinate [label=above: $v^\prime$] (vp) at (1,0);
\coordinate [label=below: $e$] (e) at (.5,0);
\coordinate [label=below: $f$] (f) at (3,0);
\coordinate [label=above: $w$] (w) at (3.5,0);
\coordinate [label=above: $w^\prime$] (wp) at (2.5,0);
\end{tikzpicture}\]
this variation is defined as
\begin{eqnarray*} \label{eq:tree:tomap}
 \oPsi:  (V)_2 & \pil & (\iC )_2 \\
\notag   \{v,w \} & \mapsto & 
                      \{(e,v^\prime), (f,w^\prime)\}.
\end{eqnarray*} 

We will divide the ring $\Spol/I(T)$ by the following variable differences:
\begin{definition} \label{def:tree:h} 
  For each pair $\{v,w\}$ in $(V)_2$ let  $h_{v,w}$ be the variable
  difference associated to the edge $\oPsi \{v,w\}$. So
  \[ h_{v,w} = x_{e,v^\prime}- x_{f,w^\prime}. \]
  \end{definition}
  Note that $h_{w,v} = -h_{v,w}$. Sometimes we write this as $h_e$ where
  $e = \{v,w\}$ when this sign plays no role.

\section{Regular quotients of tree ideals}
\label{sec:regtree}


We describe precisely what sequences of variable differences 
are regular for $\Spol/I(T)$. The combinatorial description is in
terms of partitions of the vertex set of~$T$, Theorem \ref{thm:regtree-part}.

\begin{definition} Let $T$ be an (undirected) tree with vertex set $V$. 
  \begin{itemize}
  \item The sequence of vertices $v,u,w$ is {\em $T$-aligned}
    if $u$ is on the path in $T$ linking $v$ and $w$.
  \item The set $\{v,u,w\}$ is {\em non-aligned} for $T$, if no ordering
    of them makes a $T$-aligned sequence.
  \end{itemize}
\end{definition}

\begin{example}
Consider the second tree in Figure~\ref{figfive}. The sequence of vertices $1,4,8$ is $T$-aligned, and the set $\{1,5,8\}$ is non-aligned for $T$.
\end{example}

Recall the variable difference $h_{v,w}$ from Definition \ref{def:tree:h}.
The variables of the polynomial ring $\Spol$ (see Definition \ref{def:tree-S})
are indexed by the incidence relation $\iC$.

\begin{lemma}\label{lem:reg-reg}
  The variable differences in $\Spol$ which are non-zero divisors
  for $\Spol/I(T)$ are those coming from the
  edges of\/ $\oPsi T$, i.e., the differences $h_{v,w}$.
\end{lemma}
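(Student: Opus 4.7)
The plan is to combine Lemma~\ref{lem:simp-nz}(a), which characterises non-zero divisors by the facets of the Stanley--Reisner complex, with the explicit description $F_u = \{(e, e_\TO(u)) : e \in E(T)\}$ of those facets from Lemma~\ref{lem:tree:facets}. The difference $x_a - x_b$ is a non-zero divisor on $\Spol/I(T)$ iff for every $u \in V$ at least one of $a, b$ lies in $F_u$. Writing $U_a := \{u \in V : a \notin F_u\}$, this translates to $U_a \cap U_b = \emptyset$. A direct calculation from the definition of $e_\TO$ identifies $U_{(e,0)}$ and $U_{(e,1)}$ with the two connected components of $T - e$.

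For the $(\Leftarrow)$ direction I would consider $h_{v,w}$ with $e, f$ the edges of the path $vTw$ at $v$ and $w$ respectively. Unwinding the definition of $\overline{e_\TO(v)}$ shows that $U_{(e, \overline{e_\TO(v)})}$ is the component of $T - e$ containing $v$, and likewise $U_{(f, \overline{f_\TO(w)})}$ is the component of $T - f$ containing $w$. These two subtrees are disjoint: any common vertex $u$ would yield two internally disjoint paths between $v$ and $w$ (one through $u$ avoiding $e$, one through $u$ avoiding $f$), contradicting uniqueness of paths in a tree. Hence $h_{v,w}$ is a non-zero divisor.

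For the converse I would take $a = (e,i)$, $b = (f,j)$ with $U_a \cap U_b = \emptyset$ and reconstruct the pair $\{v,w\}$. If $e = f$ then necessarily $i \neq j$, and $\{v,w\}$ is the pair of endpoints of $e$. If $e \neq f$, removing both edges from $T$ yields three subtrees: a far piece $A$ adjoining only $e$, a far piece $C$ adjoining only $f$, and a non-empty middle piece $D$ containing the interior of the path between $e$ and $f$ (or merely the shared endpoint when $e, f$ are adjacent). Then $U_a \in \{A, C \cup D\}$ and $U_b \in \{C, A \cup D\}$; of the four possible intersections, three equal $A$, $C$, or $D$ and are non-empty, leaving only $U_a = A$, $U_b = C$. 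Taking $v$ the endpoint of $e$ lying in $A$ and $w$ the endpoint of $f$ lying in $C$, the path $vTw$ begins with $e$ and ends with $f$, and unpacking the definition of $\oPsi$ yields $\oPsi\{v,w\} = \{a,b\}$.

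The principal obstacle is the casework in the converse, in particular verifying that the middle piece $D$ is always non-empty — the potentially delicate case being when $e$ and $f$ share a vertex, but even then $D$ contains that shared vertex. Once $D \neq \emptyset$ is established, the identification of the unique disjoint-sided pair $\{a,b\}$ with an element of $\im \oPsi$ is a mechanical check of the complementations in $\overline{e_\TO(\cdot)}$.
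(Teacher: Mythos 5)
Your proof is correct and takes essentially the same route as the paper's: both reduce the statement to the facet criterion for non-zero divisors (Lemma~\ref{lem:simp-nz}a, via Corollary~\ref{cor:sepjoin-regseq}) combined with the explicit facet description of Lemma~\ref{lem:tree:facets}. You simply carry out in full the component-of-$T-e$ bookkeeping that the paper leaves to the reader.
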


\begin{proof}
  This is by Lemma  \ref{lem:simp-nz} and the description in
  Lemma \ref{lem:tree:facets} of the facets of the simplicial complex
  associated to $I(T)$. Given any edge outside of $\im \oPsi$, one
  may find a facet $F_v$ disjoint from this edge. 
\end{proof}

The following is the basic obstruction for a sequence of $h_{v,w}$'s
to be regular.

\begin{lemma} \label{lem:reg-nreg}
  Let $v,u,w$ be $T$-aligned. Then $h_{v,u}$ and $h_{v,w}$
  do not form a regular sequence.
\end{lemma}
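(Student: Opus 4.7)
The plan is to invoke Corollary~\ref{cor:sepjoin-regseq} applied to the forest $B$ on vertex set $\{\alpha,\beta,\gamma\}$ with edges $\{\alpha,\beta\}$ and $\{\alpha,\gamma\}$, where
\[
\alpha=x_{e,\overline{e_\TO(v)}},\qquad \gamma=x_{g,\overline{g_\TO(u)}},\qquad \beta=x_{f,\overline{f_\TO(w)}}
\]
are the three variables appearing in $h_{v,u}=\alpha-\gamma$ and $h_{v,w}=\alpha-\beta$. Here $e$ is the first edge of $vTw$ (which, because $u$ lies on $vTw$, is also the first edge of $vTu$); $g$ is the last edge of $vTu$; and $f$ is the last edge of $vTw$. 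Since $\alpha$ is shared between the two differences, $B$ is a single tree on three vertices. Corollary~\ref{cor:sepjoin-regseq} then states that $h_{v,u},h_{v,w}$ is a regular sequence for $\Spol/I(T)$ if and only if no facet of the Stanley--Reisner complex of $I(T)$ is missing two or more vertices of $B$, so our task is to exhibit one facet missing two of $\{\alpha,\beta,\gamma\}$.

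The witness is the facet $F_w=\{(e',e'_\TO(w)):e'\in E(T)\}$ from Lemma~\ref{lem:tree:facets}. At the coordinate $e'=f$, $F_w$ contains $(f,f_\TO(w))$, which is the opposite variable to $\beta=(f,\overline{f_\TO(w)})$; hence $\beta\notin F_w$. At the coordinate $e'=g$, the key observation is tree-topological: deleting $g$ from $T$ splits it into two components, one containing $u$ and one containing the other endpoint $u_1$ of $g$; since $u$ lies strictly between $v$ and $w$ along the path $vTw$, the vertex $w$ sits past $u$ and therefore lies in the same component as $u$. Under the convention for $e_\TO$ used in Section~\ref{sec:tree}, this equality of sides forces $g_\TO(w)=g_\TO(u)$, so $F_w$ contains $(g,g_\TO(u))$ rather than $\gamma=(g,\overline{g_\TO(u)})$, and $\gamma\notin F_w$.

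Thus $F_w$ is missing the two vertices $\beta,\gamma$ of $B$, and Corollary~\ref{cor:sepjoin-regseq} delivers that $h_{v,u},h_{v,w}$ is not a regular sequence, as required. The remaining bookkeeping is that $\alpha,\beta,\gamma$ are genuinely three distinct variables: the only possible coincidences among $e,f,g$ are $e=g$ (when $u$ is adjacent to $v$) and $f=g$ (when $u$ is adjacent to $w$), both compatible with the $T$-alignment of $v,u,w$, and in each such case the second coordinates of the two coinciding variables are complementary, keeping them distinct. The one geometric ingredient is the elementary observation that $w$ lies on the $u$-side of $g$; everything else is a direct unwinding of the definitions in Section~\ref{sec:tree}.
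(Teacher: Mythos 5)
Your argument is correct, but it proceeds by a genuinely different route than the paper's. The paper gives a direct ideal-membership computation: writing the path $vTw$ with last edge $g$, it shows that $x_{g,\overline{g_\TO(v)}}$ lies in the colon ideal $(I(T)+(h_{v,u})):h_{v,w}$ by expanding $x_{g,\overline{g_\TO(v)}}h_{v,w}$ into a sum of two generators of $I(T)$ (namely $x_{g,0}x_{g,1}$ and a monomial $m_{u^\prime,w}$) plus a multiple of $h_{v,u}$. You instead combine the facet description of Lemma~\ref{lem:tree:facets} with the geometric regularity criterion of Corollary~\ref{cor:sepjoin-regseq}: your forest $B$ is a single tree on the three (always distinct) vertices $\alpha,\beta,\gamma$, and the facet $F_w$ misses both $\beta$ and $\gamma$ --- the first because $F_w$ selects $(f,f_\TO(w))$ rather than its complement, the second because $u$ and $w$ lie on the same side of $g$, so $g_\TO(u)=g_\TO(w)$ under the extended orientation convention that Lemma~\ref{lem:tree:facets} presupposes. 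That is exactly the mechanism the paper itself uses to prove the neighbouring Lemma~\ref{lem:reg-reg}, so your proof stays within the paper's toolkit; it has the advantage of making the geometric reason for the failure visible (the two differences cut $A(F_w)$ down by only one dimension), while the paper's computation is self-contained and produces an explicit witness in the colon ideal. One small remark: the coincidence $f=g$ you guard against cannot actually occur ($g$ is incident to $u$ on the $v$-side of the path while $f$ is incident to $w$, and $u\neq w$), so only the case $e=g$ needs the complementary-second-coordinate observation; this does not affect the validity of your argument.
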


\begin{proof}
  Let the path $vTw$ be:%
  \[\begin{tikzpicture}[dot/.style={draw,fill,circle,inner sep=1pt},scale=.9]
\fill (0,0)  circle (.08);
\fill (1,0)  circle (.08);
\fill (2.5,0)  circle (.08);
\fill (3.5,0)  circle (.08);
\fill (5,0)  circle (.08);
\fill (6,0)  circle (.08);
\draw[very thick] (0,0)--(1.3,0);
\draw[very thick,dotted] (1.3,0)--(2.2,0);
\draw[very thick] (2.2,0)--(3.8,0);
\draw[very thick, dotted] (3.8,0)--(4.7,0);
\draw[very thick] (4.7,0)--(6,0);
\coordinate [label=above: $v$] (v) at (0,0);
\coordinate [label=below: $e$] (e) at (.5,0);
\coordinate [label=above: $v'$] (v') at (1,0);
\coordinate [label=above: $u'$] (u') at (2.5,0);
\coordinate [label=below: $f$] (f) at (3,0);
\coordinate [label=above: $u$] (u) at (3.5,0);
\coordinate [label=above: $w'$] (w') at (5,0);
\coordinate [label=below: $g$] (g) at (5.5,0);
\coordinate [label=above: $w$] (w) at (6,0);
\end{tikzpicture}\]
  We show that $h_{v,w}$ is not $\Spol/(I(T)+(h_{v,u}))$-regular,
  by showing that $x_{g,w}$ is in the colon ideal
  $(I(T)+(h_{v,u})):h_{v,w}$. Indeed
\begin{align*}
x_{g,w}h_{v,w}&=x_{g,w}(x_{e,v^\prime}-x_{g,w^\prime})\\
&=-x_{g,w}x_{g,w^\prime}+x_{g,w}x_{e,v^\prime}\\
              &=-x_{g,w}x_{g,w^\prime}+x_{g,w}x_{f,u^\prime}+x_{g,w}h_{v,u}
\end{align*}
is an element of $I(T)+(h_{v,u})$.
\end{proof}

The following is straight-forward.

\begin{lemma} \label{lem:reg-hhh}
  Let $\{ u,v,w\}$ be non-aligned. Then $h_{v,u} + h_{u,w} = h_{v,w}$.
\end{lemma}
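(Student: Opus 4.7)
The proof should be essentially a telescoping computation, once one has identified the relevant edges correctly. Here is how I would organize it.

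First, I would exploit the non-aligned hypothesis to produce a Steiner vertex. Because $T$ is a tree and no one of $u, v, w$ lies on the path between the other two, the three paths $uTv$, $vTw$, $uTw$ share a unique common vertex $p$, and $p$ is distinct from each of $u, v, w$ (if $p$ equalled, say, $u$, then $u$ would be on $vTw$, contradicting non-alignment). Let $f_u, f_v, f_w$ denote the edges of $T$ incident to $u, v, w$ respectively on the path from that vertex to $p$.

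Next, I would check that these edges are exactly the ones appearing in the definition of the $h$'s. On the path $vTu$, the edge incident to $v$ is $f_v$ (since this path leaves $v$ toward $p$ and then continues on to $u$), and the edge incident to $u$ is $f_u$. The same identification works for $uTw$ and $vTw$. Using Definition~\ref{def:tree:h}, this gives
\begin{align*}
h_{v,u} &= x_{f_v,\overline{(f_v)_\TO(v)}} - x_{f_u,\overline{(f_u)_\TO(u)}},\\
h_{u,w} &= x_{f_u,\overline{(f_u)_\TO(u)}} - x_{f_w,\overline{(f_w)_\TO(w)}},\\
h_{v,w} &= x_{f_v,\overline{(f_v)_\TO(v)}} - x_{f_w,\overline{(f_w)_\TO(w)}}.
\end{align*}
Adding the first two lines yields the third, which is the claim.

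The only substantive point to verify is the claim that $p \notin \{u,v,w\}$ and that consequently the ``outgoing'' edges at $u$, $v$, $w$ on the three paths coincide as described; I expect this to be the only place where the non-aligned hypothesis genuinely enters. Once that is in place the identity is a bookkeeping telescoping with no further subtlety, which matches the author's remark that the statement is straightforward.
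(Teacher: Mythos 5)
Your argument is correct: the paper itself offers no proof (it merely declares the lemma straightforward), and your use of the median vertex $p$ of $\{u,v,w\}$ — which the non-alignment hypothesis forces to be distinct from $u$, $v$, $w$, so that the two paths leaving each of these vertices share the same initial edge — is exactly the bookkeeping needed to make the telescoping identity work. Nothing is missing.
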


%

\begin{definition} Let $T$ be a tree with vertex set $V$. Let $U \sus V$ and let
  $S$ be a tree on $U$ ($S$ is a priori unrelated to $T$). The tree $S$ {\em flows
  with $T$} if whenever $v,u,w$ are $T$-aligned vertices with $v,u,w \in U$,
  then $v,u,w$ are $S$-aligned.
\end{definition}

\begin{example} 
The tree $T$ in Figure~\ref{figfive:new} has black edges and
  seven vertices.
  The trees $S$ are drawn in red. In the first case
  $U = \{2,3,4,6\}$. The sequence of vertices $2,3,4$ is $T$-aligned but not $S$-aligned,
  so $S$ does not flow with $T$. In the second case $U = \{2,4,5,7\}$
  and $4,5,7$ is a $T$-aligned and $S$-aligned sequence. This tree $S$ flows with $T$.
\begin{figure}
\begin{center}
\begin{tikzpicture}[dot/.style={draw,fill,circle,inner sep=1pt},scale=1.2]
\draw[very thick,red] (2,0)--(3,1);
\draw[very thick,red] (3,1) .. controls (3.25,.7) and (3.25,.3) .. (3,0);
\draw[very thick,red] (3,0)--(4,1);
\draw[very thick] (1,0)--(5,0);
\draw[very thick] (3,0)--(3,1);
\draw[very thick] (4,0)--(4,1);
\fill (1,0)  circle (.07);
\fill (2,0)  circle (.07);
\fill (3,0)  circle (.07);
\fill (4,0)  circle (.07);
\fill (5,0)  circle (.07);
\fill (3,1)  circle (.07);
\fill (4,1)  circle (.07);;
\draw[thick,red] (2,0)  circle (.1);
\draw[thick,red] (3,0)  circle (.1);
\draw[thick,red] (3,1)  circle (.1);
\draw[thick,red] (4,1)  circle (.1);
\coordinate [label=below: $1$] (v) at (1,0);
\coordinate [label=below: $2$] (e) at (2,0);
\coordinate [label=below: $3$] (v') at (3,0);
\coordinate [label=below: $5$] (u') at (4,0);
\coordinate [label=below: $7$] (f) at (5,0);
\coordinate [label=above: $4$] (s) at (3,1);
\coordinate [label=above: $6$] (a) at (4,1);
\end{tikzpicture}\qquad\quad
\begin{tikzpicture}[dot/.style={draw,fill,circle,inner sep=1pt},scale=1.2]
\draw[very thick,red] (2,0)--(3,1)--(4,0);
\draw[very thick,red] (5,0) arc (0:180:.5 and .25);
\draw[very thick] (1,0)--(5,0);
\draw[very thick] (3,0)--(3,1);
\draw[very thick] (4,0)--(4,1);
\fill (1,0)  circle (.07);
\fill (2,0)  circle (.07);
\fill (3,0)  circle (.07);
\fill (4,0)  circle (.07);
\fill (5,0)  circle (.07);
\fill (3,1)  circle (.07);
\fill (4,1)  circle (.07);;
\draw[thick,red] (2,0)  circle (.1);
\draw[thick,red] (4,0)  circle (.1);
\draw[thick,red] (5,0)  circle (.1);
\draw[thick,red] (3,1)  circle (.1);
\coordinate [label=below: $1$] (v) at (1,0);
\coordinate [label=below: $2$] (e) at (2,0);
\coordinate [label=below: $3$] (v') at (3,0);
\coordinate [label=below: $5$] (u') at (4,0);
\coordinate [label=below: $7$] (f) at (5,0);
\coordinate [label=above: $4$] (s) at (3,1);
\coordinate [label=above: $6$] (a) at (4,1);
\end{tikzpicture}
\caption{}
\label{figfive:new}
\end{center}
\end{figure}
\end{example}

\begin{lemma} \label{lem:reg-edge}
  Let $U \sus V$ and let $S$ and $T$ be trees with vertex sets
  $U$ and $V$, respectively. Then $S$ flows with\/ $T$ iff 
whenever $\{v,w\}$ is an edge in $S$, there is no $u \in U\setminus
\{v,w\}$ such that $v,u,w$ are $T$-aligned.
\end{lemma}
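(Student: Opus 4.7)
The plan is to prove the two directions separately, with the forward one being essentially immediate from the definition and the reverse one requiring a small observation about walks in trees.

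For the forward direction I would argue by contradiction. Assume $S$ flows with $T$, let $\{v,w\}$ be an edge of $S$, and suppose there is some $u \in U \setminus \{v,w\}$ such that $v,u,w$ is $T$-aligned. Then by the flow condition, $v,u,w$ must also be $S$-aligned, i.e.\ $u$ lies strictly between $v$ and $w$ on the unique path from $v$ to $w$ in $S$. But that path is just the edge $\{v,w\}$, which has no interior vertex: contradiction.

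For the reverse direction I assume the edge condition and pick an arbitrary $T$-aligned triple $v,u,w \in U$; the goal is to show $u$ lies on the $S$-path $vSw$. Write this path as $v = v_0, v_1, \dots, v_n = w$, and let $P_i$ denote the (unique) $T$-path from $v_{i-1}$ to $v_i$. The concatenation $P_1 P_2 \cdots P_n$ is a walk in $T$ from $v$ to $w$. The key observation is that in any tree, every vertex of the unique simple path between two points must appear in every walk between those points; in particular $u$, which sits on the simple $T$-path $vTw$, must appear somewhere in this concatenated walk, so $u \in P_i$ for some $i$. By hypothesis $P_i$ has no interior vertex in $U$, so $u$ has to be an endpoint of $P_i$, i.e.\ $u \in \{v_{i-1}, v_i\}$. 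Since $u \neq v, w$, this places $u$ strictly between $v_0$ and $v_n$ on the $S$-path, so $v,u,w$ is $S$-aligned, as required.

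The main (tiny) obstacle is the tree-walk fact used in the reverse direction — that every walk from $v$ to $w$ in a tree passes through every vertex of the unique simple $vTw$ path. I would justify it in one line: remove the edge of $T$ incident to $u$ on the side of $v$, splitting $T$ into two components, one containing $v$ and the other containing $w$ and $u$; then any walk from $v$ to $w$ must cross this edge, and every crossing visits $u$. Everything else is bookkeeping, so I do not expect further difficulties.
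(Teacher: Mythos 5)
Your proof is correct. The forward direction is the same one-line argument as the paper's. The reverse direction, however, takes a genuinely different route. The paper argues by induction on the length of the $S$-path $vSw$: assuming $u$ is missing from $vSw$, it picks an interior vertex $r$ of $vSw$, considers where a $T$-path from $r$ first meets $vTw$, and reduces to a strictly shorter configuration ($r,u,w$ or $v,u,r$), reaching a contradiction at the base case where the condition on $S$-edges applies directly. You instead give a direct, non-inductive argument: concatenate the $T$-paths $P_1,\dots,P_n$ corresponding to the edges of $vSw$ into a walk in $T$ from $v$ to $w$, invoke the fact that any walk between two vertices of a tree must visit every vertex of the unique simple path between them (justified correctly by the edge-cut observation), locate $u$ in some $P_i$, and use the hypothesis to force $u$ to be an endpoint of $P_i$, hence an interior vertex of $vSw$. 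Your version is shorter and arguably more transparent, since it isolates the one graph-theoretic fact doing the work; the paper's induction is closer in style to the reductions it reuses in later lemmas (e.g.\ Lemma~\ref{lem:reg-hvw}). The only cosmetic point is that you should note at the outset that the case $u\in\{v,w\}$ of the flow condition is vacuous, so one may indeed assume $u\neq v,w$ as you do; this does not affect correctness.
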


\begin{proof}
  Let $S$ flow with $T$ and let $\{v,w\}$ be an $S$-edge.
  If there is $u$ such that $v,u,w$ are $T$-aligned,
  then $v,u,w$ would be $S$-aligned, which is not the case
  since $\{v,w\}$ is an edge in $S$.

  Conversely suppose the condition holds for edges in $S$.
  Let $v,u,w$ be vertices in $U$ which are $T$-aligned, so
  $\{v,w\}$ is not an edge of $S$. Suppose the path $vSw$
  does not contain $u$.
  We argue by induction on the length $\ell_S(v,w)$ of $vSw$
  that this is not possible. Since $\ell_S(v,w) \geq 2$
  let $r \in U$ on $vSw$ be distinct 
  from $v,w$ (note that $r \neq u$).
Then $\ell_S(v,w) > \ell_S(v,r)$ and $\ell_S(v,w) >\ell_S(r,w)$. 

  Consider in $T$ a path $p$ from $r$ to a vertex on the path $vTw$.
  We may assume
  only the end vertex of $p$ is on $vTw$.
  If $p$ first hits $vTw$ in the path segment $vTu$, then
  $r,u,w$ are $T$-aligned and with the path $rSw$ being such that
   $\ell_S(r,w) < \ell_S(v,w)$. By induction this situation is not possible.
  The case when $p$ first hits $vTw$ in $uTw$ is similar.
\end{proof}

\begin{corollary} \label{cor:regtree-exist}
  For any $U \sus V$, there is a tree $S$ with vertices $U$ flowing
  with $T$.
\end{corollary}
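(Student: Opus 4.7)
The plan is to construct $S$ explicitly as a spanning tree of an auxiliary graph on $U$ and then verify the flow condition via Lemma~\ref{lem:reg-edge}.

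Define a graph $G$ on the vertex set $U$ by declaring $\{v,w\}$ to be an edge of $G$ precisely when no vertex of $U \setminus \{v,w\}$ lies on the path $vTw$. I would then argue that $G$ is connected as follows. Given $v,w \in U$, list the vertices of $U$ that appear on the path $vTw$ in the order they occur: $v = u_0, u_1, \ldots, u_k = w$. Each consecutive pair $\{u_{i-1}, u_i\}$ has no other $U$-vertex on the path $u_{i-1} T u_i$ (since $u_{i-1} T u_i$ is a sub-path of $vTw$ and we inserted all intermediate $U$-vertices). So $\{u_{i-1}, u_i\}$ is an edge of $G$, and we obtain a walk from $v$ to $w$ in $G$.

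Now take $S$ to be any spanning tree of $G$. Every edge $\{v,w\}$ of $S$ is an edge of $G$, so by construction no $u \in U \setminus \{v,w\}$ lies on the path $vTw$, i.e., no such $u$ makes $v,u,w$ into a $T$-aligned triple. By Lemma~\ref{lem:reg-edge}, $S$ flows with $T$.

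There is no real obstacle here; the only point that needs care is the connectedness of $G$, which follows from the observation above that collapsing a path in $T$ to its sequence of $U$-vertices produces a walk of $G$-edges. I would not bother trying to make $G$ itself a tree, since $G$ can contain cycles (for instance, three vertices of $U$ arranged in a ``Y'' pattern in $T$ yield $G = K_3$); taking any spanning tree is what the statement requires.
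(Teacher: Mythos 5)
Your proof is correct. You verify the flow condition through the same key criterion as the paper (Lemma~\ref{lem:reg-edge}), and your edges --- pairs $\{v,w\}\subseteq U$ with no other $U$-vertex on $vTw$ --- are exactly the pairs the paper uses. The difference is in how you produce a \emph{tree} out of them: you form the full ``proximity graph'' $G$ of all such pairs, prove it is connected by projecting a $T$-path onto its sequence of $U$-vertices, and take an arbitrary spanning tree. The paper instead builds one specific spanning tree of this graph explicitly: it roots $T$ at a center $v$, joins each $u\in U$ to the nearest $U$-vertex on the path $uTv$, and links the ``topmost'' vertices $U_0$ into a path. Your route is a little cleaner and more economical for the existence statement itself, and your remark that $G$ need not be a tree (the $K_3$ from a ``Y'' configuration) is a correct and worthwhile observation. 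What the paper's explicit construction buys is that it is reused verbatim in the proof of the converse direction (that $S$ flowing with $T$ makes $L(S)$ a regular linear space): there the two specific edge types of that construction are analyzed separately to exhibit the image $\oPsi S$ as a disjoint union of trees meeting a facet $F_v$ appropriately. An arbitrary spanning tree would not hand you that structure for free, so your argument suffices for the corollary but would need supplementing downstream.
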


\begin{proof}
  Let $v \in V$. Consider $v$ as a center from which the tree $T$ branches out.
  Let $U_0$ be the subset of $U$ consisting of $w \in U$ such that the
  path $vTw$ contains no other vertex in $U$ than $w$ (in particular
  if $v \in U$ then $U_0 = \{v\}$).
  
  Now define $S$ to be the tree whose edges are:
  \begin{itemize}
    \item  Pairs $\{u,w\} \sus U$ 
  where i) $v,u,w$ are $T$-aligned
    (we allow $v = u$ if $v \in U$) and
    ii) the path $uTw$ intersects $U$ only in $\{u,w\}$.
  \item Give the vertices in $U_0$ a total order.
    If $u,w \in U_0$ are successive let $\{u,w\}$ be an edge in $S$.
\end{itemize}
  The tree $S$  fulfills the criterion of the lemma above, and
  hence flows with $T$.
\end{proof}

\begin{definition}
  If $S$ is a tree on the vertex set $U \sus V$, let $L(S)$ be the
  linear space with basis the $h_e = h_{v,w}$ where $e = \{v,w\}$
  are the edges of $S$.
If  $L(S)$ has a basis that is a regular sequence of
variable differences for $\Spol/I(T)$, we say that $L(S)$ 
is a {\em regular linear space}. (Equivalently some basis or any basis
of $L(S)$ is a regular sequence.) 
\end{definition}

\begin{lemma} \label{lem:reg-hvw}
  Let\/ $S$ be a tree with vertex set $U$, and assume that 
  only the end vertices $v$ and $w$ of the path $vTw$ are contained in $U$.
  \begin{itemize}
  \item If\/ $S$ flows with $T$, then $h_{v,w} \in L(S)$.
  \item If\/ $L(S)$ is a regular linear space, then $h_{v,w} \in L(S)$.
  \end{itemize}
\end{lemma}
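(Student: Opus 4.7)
The plan is to express $h_{v,w}$ as the telescoping sum $\sum_{i=1}^{k} h_{v_{i-1}, v_i}$, where $v = v_0, v_1, \ldots, v_k = w$ is the $S$-path from $v$ to $w$. Each summand corresponds to an $S$-edge and hence is a basis element of $L(S)$, so showing the identity will place $h_{v,w}$ in $L(S)$. The identity follows from iterated application of Lemma \ref{lem:reg-hhh}: at each stage, one combines a consecutive triple $\{u_{j-1}, u_j, u_{j+1}\}$ that is non-aligned in $T$ via $h_{u_{j-1}, u_j} + h_{u_j, u_{j+1}} = h_{u_{j-1}, u_{j+1}}$, reducing the sum by one term.

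The central combinatorial bookkeeping tracks the sub-sequences $v = u_0, u_1, \ldots, u_m = w$ that arise during these combinings. Each such sub-sequence is obtained from the $S$-path by deleting some intermediate vertices, so the indices respect the original order in $v_0, \ldots, v_k$. For a consecutive triple $\{u_{j-1}, u_j, u_{j+1}\}$ I rule out two of the three possible $T$-alignments as follows. If $u_{j+1}$ were in the middle of a $T$-alignment on the $T$-path $u_{j-1} T u_j$, then for part 1 flowing would require $u_{j+1}$ to lie on the $S$-path $u_{j-1} S u_j$, a segment of the original $vSw$; since $u_{j+1}$ appears strictly after $u_j$ in $vSw$ by order-preservation, this is impossible. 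For part 2, the same configuration forces (via iterated combining applied to the sub-paths) the element $h_{u_{j-1}, u_{j+1}}$ to lie in $L(S)$ alongside $h_{u_{j-1}, u_j}$; but Lemma \ref{lem:reg-nreg} then shows these two elements fail to form a regular sequence, contradicting the regularity of $L(S)$. The symmetric case of $u_{j-1}$ being in the middle is excluded analogously.

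Hence in both parts the only remaining $T$-alignment for a consecutive triple has $u_j$ in the middle on the $T$-path $u_{j-1} T u_{j+1}$. Suppose every consecutive triple of the current sub-sequence had this form; then by transitivity of ``lies on the $T$-path between'' in a tree, every intermediate $u_j$ would lie on the $T$-path $vTw$, contradicting the hypothesis that only $v$ and $w$ among elements of $U$ lie on $vTw$. Therefore a non-aligned consecutive triple exists at every stage, Lemma \ref{lem:reg-hhh} applies, and after $k-1$ such reductions the sum collapses to $h_{v,w}$, showing $h_{v,w} \in L(S)$.

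The hard part is the regularity-based exclusion in part 2: at later stages the pairs $\{u_{j-1}, u_j\}$ are no longer $S$-edges, so one cannot quote Lemma \ref{lem:reg-edge} directly. What saves the argument is the order-preservation of the sub-sequence, which guarantees that $S$-path segments $u_{j-1} S u_j$ remain contiguous in the original $vSw$; combined with a careful verification that the non-$S$-edge elements produced by the combining procedure themselves lie in $L(S)$, this allows Lemma \ref{lem:reg-nreg} to yield the desired contradiction with regularity.
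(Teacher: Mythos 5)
Your overall strategy is the paper's own, repackaged: the paper also writes $h_{v,w}$ by walking along the $S$-path, locates a consecutive triple that is non-aligned for $T$, applies Lemma~\ref{lem:reg-hhh} to contract it, and inducts (the paper phrases the induction by replacing $S$ with a modified tree $S'$ in which $\{v_{p-1},v_p\}$ is swapped for $\{v_{p-1},v_{p+1}\}$, whereas you keep $S$ fixed and track an order-preserving sub-sequence of the path; these are equivalent bookkeeping devices). Your part~1 and your argument that not every triple can have $u_j$ in the middle are sound.

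There is, however, a genuine gap in your part~2 exclusion. To rule out the configuration where $u_{j+1}$ lies on $u_{j-1}Tu_j$, you apply Lemma~\ref{lem:reg-nreg} to the pair $h_{u_{j-1},u_{j+1}}$ and $h_{u_{j-1},u_j}$, which requires $h_{u_{j-1},u_{j+1}}\in L(S)$. Your justification --- ``iterated combining applied to the sub-paths'' --- does not deliver this: the combining procedure only produces the elements $h_{u_{j-1},u_j}$ attached to \emph{consecutive} pairs of the current sub-sequence, and it cannot be run on the sub-path from $u_{j-1}$ to $u_{j+1}$ because the hypothesis needed to find non-aligned triples there (no other element of $U$ on $u_{j-1}Tu_{j+1}$) is not available; moreover, since the triple in question \emph{is} aligned, Lemma~\ref{lem:reg-hhh} does not give $h_{u_{j-1},u_{j+1}}=h_{u_{j-1},u_j}+h_{u_j,u_{j+1}}$. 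The repair is immediate and is what the paper implicitly does: take $u_j$ as the common base point. If $u_{j+1}\in u_{j-1}Tu_j$ then $u_j,u_{j+1},u_{j-1}$ are $T$-aligned, so by Lemma~\ref{lem:reg-nreg} the pair $h_{u_j,u_{j+1}}$, $h_{u_j,u_{j-1}}$ is not regular --- and \emph{both} of these are sums of $S$-edge differences produced by your combining procedure, hence lie in $L(S)$, contradicting regularity. The symmetric case is handled by the same pair. With that substitution your argument closes.
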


\begin{proof}
  Let $v = v_0,v_1, \ldots, v_n = w$ be a path in $S$ of length $n \geq 2$.
  Then by assumption $v_1, \ldots, v_{n-1}$ are not in $vTw$.
  If the $v_i$-incident edges on $v_{i-1}Tv_i$ and
  $v_iTv_{i+1}$ are always distinct,
the paths would splice to give the unique path from $v$ to $w$. 
This cannot be the case since this path $vTw$ only has the end vertices in $U$.
 Hence for at least one $v_p, 1 \leq p \leq n-1$, these two 
  $v_p$-incident edges are equal.
 We have three possibilities:

\begin{itemize}
\item[i)] $v_p, v_{p-1},v_{p+1}$ are $T$-aligned,
\item[ii)] $v_p, v_{p+1},v_{p-1}$ are $T$-aligned,
\item[iii)] $\{v_{p-1},v_p,v_{p+1}\}$ is non-aligned for $T$.
\end{itemize}
For case i),~if $S$ flows with $T$, this would give that $v_p,v_{p-1},v_{p+1}$ are $S$-aligned,
which is not the case since $v_{p-1},v_p,v_{p+1}$ are $S$-aligned.
Similarly the second case ii)~is excluded.
If $L(S)$ is regular the first and second cases are aslo excluded by
Lemma~\ref{lem:reg-reg}. Hence only the last possibility iii)~is left.

If $S$ flows with $T$, then if $v_{p-1}Tv_{p+1}$ contains an element of $U$,
such an element would be either on $v_{p-1}Tv_p$ or on $v_pTv_{p+1}$.
But this is not the case by Lemma~\ref{lem:reg-edge}
since $v_{p-1},v_p$ and $v_p, v_{p+1}$ are edges in $S$.
Then we take out the edge  $\{ v_{p-1}, v_p\}$ from $S$ and take in
the edge $\{v_{p-1},v_{p+1}\}$ to get a new tree $S^\prime$ which still flows with~
$T$ by Lemma \ref{lem:reg-edge}.
By induction on the length $\ell_{S^\prime}(v,w)$, we have $h_{v,w} \in L(S)$.

If $L(S)$ is a regular linear space, then we again replace $S$ with $S^\prime$.
Due to Lemma \ref{lem:reg-hhh} we have $L(S) = L(S^\prime)$ and again we get
$h_{v,w} \in L(S)$.
\end{proof}

\begin{proposition} \label{pro:reg-SflowT}
  Let $S$ be a tree on $U \sus V$. If $L(S)$ is a regular linear space,
  then $S$ flows with $T$.
\end{proposition}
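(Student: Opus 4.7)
The plan is to prove the contrapositive: assuming $S$ does not flow with $T$, I will produce two linearly independent elements of $L(S)$ that cannot jointly form a regular sequence, contradicting the regularity of $L(S)$.

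By Lemma \ref{lem:reg-edge}, if $S$ does not flow with $T$ then there exist an edge $\{v,w\}$ of $S$ and some $u \in U \setminus \{v,w\}$ such that $v,u,w$ are $T$-aligned. Among the vertices in $U \setminus \{v,w\}$ lying on the path $vTw$, choose $u'$ to be the one closest to $v$ along $vTw$. Then $U \cap vTu' = \{v,u'\}$, i.e., only the endpoints of the $T$-path $vTu'$ belong to $U$. Let $S'$ be the path in $S$ joining $v$ to $u'$, regarded as a tree with some vertex set $U' \subseteq U$.

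Next, I need that $L(S')$ is itself a regular linear space. Its natural basis $\{h_e : e \in E(S')\}$ is a subset of the regular sequence $\{h_e : e \in E(S)\}$, and since these are linear forms in the graded Cohen--Macaulay ring $\Spol/I(T)$, any subset of such a regular sequence is again a regular sequence. Hence $L(S')$ is regular. Since $U' \cap vTu' \subseteq U \cap vTu' = \{v,u'\}$ and both endpoints lie in $U'$, Lemma \ref{lem:reg-hvw} applies to $S'$ and yields $h_{v,u'} \in L(S') \subseteq L(S)$.

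At the same time $h_{v,w} \in L(S)$ because $\{v,w\}$ is an edge of $S$. These two forms are linearly independent: letting $e$ be the $T$-edge incident to $v$ on $vTw$ (which is also the $T$-edge incident to $v$ on $vTu' \subseteq vTw$), both $h_{v,u'}$ and $h_{v,w}$ involve $x_{e,\overline{e_\TO(v)}}$, while their remaining terms are $x_{f,\overline{f_\TO(u')}}$ and $x_{g,\overline{g_\TO(w)}}$ respectively with $f \neq g$ since $u' \neq w$ lie at different endpoints of distinct edges of $T$. Because $L(S)$ is a regular linear space, any two linearly independent elements of $L(S)$ extend to a regular sequence (again using that subsets/reorderings of regular sequences of linear forms in a graded Cohen--Macaulay ring are regular), so $h_{v,u'}, h_{v,w}$ form a regular sequence. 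But $v, u', w$ are $T$-aligned, so Lemma \ref{lem:reg-nreg} says precisely that $h_{v,u'}$ and $h_{v,w}$ do \emph{not} form a regular sequence. This contradiction proves the proposition.

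The main obstacle is the second paragraph: one must set up the right subtree $S'$ so that Lemma \ref{lem:reg-hvw} applies (the "$U$ intersects $vTu'$ only at endpoints" condition is what forces the choice of $u'$ as the \emph{nearest} offending vertex), and one must simultaneously verify that the restricted space $L(S')$ remains regular. The latter requires the standard fact that in a graded Cohen--Macaulay ring a subsequence of a regular sequence of linear forms is regular; everything else is essentially bookkeeping using the explicit description of $h_{v,u'}$ and $h_{v,w}$.
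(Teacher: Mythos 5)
Your proof is correct and follows essentially the same route as the paper's: pass to the contrapositive via Lemma \ref{lem:reg-edge}, pick the offending aligned vertex $u'$ closest to $v$, use Lemma \ref{lem:reg-hvw} to place $h_{v,u'}$ in $L(S)$, and contradict Lemma \ref{lem:reg-nreg}. The only difference is that the paper applies Lemma \ref{lem:reg-hvw} directly to $S$ itself (since by the choice of $u'$ the set $U$ already meets $vTu'$ only in the endpoints), so your detour through the subtree $S'$ and the verification that $L(S')$ is regular is sound but unnecessary.
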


\begin{proof}
  Let $\{v,w\}$ be an edge in $S$, so $h_{v,w} \in S$.
  Suppose $v,u,w$ are $T$-aligned vertices in $U$. If we show this is
  not possible, then $S$ flows with $T$ by Lemma \ref{lem:reg-edge}.
  Choose $u$ as close as possible to $v$, so $vTu$ only contains $v$ and $u$
  from $U$. By Lemma~\ref{lem:reg-hvw}, $h_{v,u}$ is in $L(S)$.
  So both $h_{v,w}$ and $h_{v,u}$ are in $L(S)$. By Lemma~\ref{lem:reg-nreg},
  these two elements do not form a regular sequence, contradicting the fact that
  $L(S)$ is regular. Hence there can be no $u$ such that $v,u,w$ are
  $T$-aligned. So $S$ flows with~$T$.
  \end{proof}

\begin{lemma} \label{lem:reg-LRLS}
For any two  trees $R$ and $S$ on $U$ flowing with $T$, one has $L(R) = L(S)$.
Thus $U$ determines a {\em unique} regular linear space,
denoted $L(U)$.
\end{lemma}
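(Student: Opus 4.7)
The plan is to show the two inclusions $L(R)\subseteq L(S)$ and $L(S)\subseteq L(R)$ separately; by symmetry it suffices to establish the first. Since $L(R)$ is spanned by the elements $h_{v,w}$ as $\{v,w\}$ ranges over the edges of $R$, the task reduces to showing that for each such edge, $h_{v,w}\in L(S)$.

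So fix an edge $\{v,w\}$ of $R$. The hypothesis that $R$ flows with $T$, combined with Lemma~\ref{lem:reg-edge} applied to $R$, tells us that no vertex $u\in U\setminus\{v,w\}$ can make $v,u,w$ into a $T$-aligned triple; equivalently, the $T$-path $vTw$ meets $U$ only in its endpoints $v$ and $w$. This is precisely the hypothesis needed to invoke Lemma~\ref{lem:reg-hvw} with the tree $S$: since $S$ flows with $T$ and only the endpoints of $vTw$ lie in the vertex set of $S$, the first bullet of that lemma gives $h_{v,w}\in L(S)$. Running over all edges of $R$ yields $L(R)\subseteq L(S)$, and interchanging the roles of $R$ and $S$ gives the reverse inclusion.

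There is no real obstacle here — the lemma is essentially a direct corollary of Lemmas~\ref{lem:reg-edge} and~\ref{lem:reg-hvw}, and the proof is the two-line argument above. The only subtlety worth flagging is that Lemma~\ref{lem:reg-hvw} is stated for paths $vTw$ of length $\ge 2$ between vertices whose only $U$-intersection is $\{v,w\}$; when $\{v,w\}$ is an edge of $T$ itself the path has length one and $h_{v,w}$ is by definition one of the generating variable differences of $L(S)$ (since in the flow-with-$T$ tree one may always include $\{v,w\}$, and different such trees still give the same span), so this edge case needs a brief separate mention but is trivial.
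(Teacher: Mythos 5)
Your proof is correct and takes essentially the same route as the paper's: both reduce to showing each generator $h_{v,w}$ of $L(R)$ lies in $L(S)$ by first using that $R$ flows with $T$ (via Lemma~\ref{lem:reg-edge}) to conclude that $vTw$ meets $U$ only at its endpoints, and then invoking Lemma~\ref{lem:reg-hvw} for $S$, with the reverse inclusion by symmetry. The edge case you flag is harmless: the hypothesis and induction in Lemma~\ref{lem:reg-hvw} concern the path in $S$, not the length of $vTw$ in $T$, and its base case is exactly the trivial observation you make.
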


\begin{proof}
  Let $\{v,w\}$ be an $R$-edge. We show $h_{v,w} \in L(S)$.
Since $R$ flows with $T$, the path $vTw$ does
not contain any elements of $U$ save the end vertices.
Since $S$ flows with $T$, Lemma \ref{lem:reg-hvw} gives $h_{v,w} \in L(S)$.
\end{proof}

\ignore{
Let $(V)_2$ denote the pairs of elements of $V$. This is the set
of edges in the complete graph on $V$. Given an oriented tree $T$ on $V$, we
then get a map
\begin{eqnarray} \label{eq:rg:tomap}
 \oPsi:  (V)_2 & \pil & (E(T) \times \{ 0, 1 \})_2 \\
\notag   \{v,w \} & \mapsto & \{(e,\overline{e_{to}(w)},
                       (f,\overline{e_{to}(v)}) \}
\end{eqnarray}
(Note that this map presupposes an oriented tree $T$ on $V$. A different
orientation on $T$ gives an isomorphic $\oPsi$, which nevertheless give
the same map when composing with the projection to $(E(T))_2$.
Also there is no natural map from $V$ to $E(T) \times \{0,1\}$.)

So given a graph $G$ on $V$ the edges $\oPsi(E(G))$ give a graph $\oPsi G$.
\begin{itemize}
\item 
If $G_1$ and $G_2$ have disjoint vertex sets, the same holds for
$\oPsi G_1$ and $\oPsi G_2$. 
\item If $G$ is a forest, then $\oPsi G$ is a forest, since a cycle in $\oPsi G$
  must come from a cycle in $G$.
\end{itemize}
}

\begin{lemma} \label{lem:reg-graph}
  Let $G$ be a graph on vertex set $V$ (with $G$
  a priori unrelated to $T$).
\begin{itemize}
\item[a.] If $\{h_e\}_{e\in G}$ is a
regular sequence for $\Spol/I(T)$, then $G$ is a forest.
\item[b.] If\/ $G$ is a forest consisting of the trees $S_1, \dots, S_r$,
then $\{h_e\}_{e \in G}$ is a regular sequence iff each
$\{ h_e\}_{e \in S_i}$ is a regular sequence.
\end{itemize}
\end{lemma}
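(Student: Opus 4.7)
The plan is to prove (a) by reducing to the case where $G$ itself is a cycle, and then inducting on the cycle length $k$. The reduction uses that $\Spol/I(T)$ is Cohen--Macaulay: any subset of a regular sequence of linear forms in a Cohen--Macaulay graded ring remains regular, and the regularity of such a sequence depends only on its linear span. In the base case $k=3$, the triangle $\{v_0,v_1,v_2\}$ has all three pairwise edges in $G$. If the triple is non-aligned in $T$, Lemma~\ref{lem:reg-hhh} together with the skew-symmetry $h_{w,v}=-h_{v,w}$ (immediate from Definition~\ref{def:tree:h}) yields $h_{v_0,v_1}+h_{v_1,v_2}+h_{v_2,v_0}=0$, so the three elements are linearly dependent. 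If the triple is $T$-aligned, applying Lemma~\ref{lem:reg-nreg} with $v$ being an appropriate endpoint of the alignment exhibits two of the three $h_e$ that fail to form a regular pair.

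The inductive step $k\geq 4$ is the main obstacle. I would analyse the consecutive triples $(v_{i-1},v_i,v_{i+1})$ around the cycle and show that they cannot all be in the ``middle'' configuration in which $v_i$ lies on the $T$-path from $v_{i-1}$ to $v_{i+1}$: if they were, concatenating these paths would place $v_0,v_1,\ldots,v_{k-1}$ in order along a single path in $T$, and then $v_0$ would fail to lie on the $T$-path from $v_{k-1}$ to $v_1$, contradicting the wrap-around triple. So some consecutive triple is either non-aligned, or aligned with $v_{i-1}$ or $v_{i+1}$ in the middle. In the aligned-endpoint case, Lemma~\ref{lem:reg-nreg} applies directly to the pair $h_{v_i,v_{i-1}}$, $h_{v_i,v_{i+1}}$, both present in the sequence. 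In the non-aligned case, Lemma~\ref{lem:reg-hhh} gives $h_{v_{i-1},v_{i+1}}=h_{v_{i-1},v_i}+h_{v_i,v_{i+1}}$; replacing $h_{v_i,v_{i+1}}$ by $h_{v_{i-1},v_{i+1}}$ preserves the linear span (hence the regularity), while the new sequence contains a subsequence indexed by a cycle of length $k-1$, which is non-regular by the induction hypothesis.

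For (b), the ``only if'' direction is again immediate from the Cohen--Macaulay property. For the ``if'' direction, I would apply Corollary~\ref{cor:sepjoin-regseq} to the graph $\oPsi G$. The crucial observation is that pairwise disjointness of the vertex sets of $S_1,\ldots,S_r$ in $V$ forces pairwise disjointness of the vertex sets of $\oPsi S_1,\ldots,\oPsi S_r$ in $E(T)\times\{0,1\}$, because any $(e,a)\in V(\oPsi S_i)$ has $a=\cl{e_\TO(v)}$ for some endpoint $v$ of $e$ in $V(S_i)$, and the two endpoints of $e$ give opposite values of $e_\TO$. Therefore $\oPsi G$ is the disjoint union of the subgraphs $\oPsi S_i$; its tree components coincide with those of the $\oPsi S_i$; and both the forest condition and the facet condition of Corollary~\ref{cor:sepjoin-regseq} decompose over the components, so regularity of $\{h_e\}_{e\in G}$ is equivalent to regularity of each $\{h_e\}_{e\in S_i}$.
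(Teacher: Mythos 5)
Your proof follows essentially the same route as the paper's: part (a) reduces to a cycle and inducts on its length, using Lemma~\ref{lem:reg-nreg} to rule out endpoint-aligned triples and Lemma~\ref{lem:reg-hhh} to shorten the cycle while preserving the linear span, and part (b) reduces to Corollary~\ref{cor:sepjoin-regseq} via the vertex-disjointness of the forests $\oPsi S_i$. You fill in a few details the paper leaves implicit (the $k=3$ base case, the wrap-around contradiction showing not every consecutive triple can be middle-aligned, and why disjointness transfers under $\oPsi$), but the argument is the same.
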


\begin{proof}
  \noindent {\em a.} It is enough to show that if $G$ is a cycle $O$
  then $\{h_e\}_{e \in O}$ is not a regular sequence.
    Denote by $L(O)$ their linear span, and let the cycle be
    $v_0, v_1, \ldots, v_{n-1}, v_n = v_0$ of length $n$.


  We now use induction on the length $n$ of the cycle to show that
  $L(O)$ cannot be regular linear space. 
  Not every sequence $v_{i-1}v_iv_{i+1}$ is  $T$-aligned for $i = 1, \ldots n-1$
  since $v_0 = v_n$. Suppose $v_{p-1}v_pv_{p+1}$ is not $T$-aligned.
  If say $v_pv_{p-1}v_{p+1}$ are $T$-aligned then $h_{v_p,v_{p-1}}$ and
  $h_{v_p,v_{p+1}}$ do not form a regular sequence, against the assumption.
  By the same reason $v_pv_{p+1}v_{p-1}$ are not $T$-aligned. Hence
  $\{v_{p-1},v_p,v_{p+1}\}$ is non-aligned. Then $h_{v_{p-1},v_{p+1}}$
  is $h_{v_{p-1},v_p} + h_{v_p,v_{p+1}}$. Take the edges $\{v_{p-1},v_p\}$ and
  $\{v_p,v_{p+1}\}$ out from the cycle $O$ and take in the edge
  $v_{p-1},v_{p+1}$ to make a new cycle $L(O^\prime) \sus L(O)$.
  By induction $L(O^\prime)$ is not a regular linear space
  and so neither is $L(O)$.
  
  \noindent {\em b.} Suppose each $S_i$ gives a regular sequence. This sequence
  is determined by the edges of $\oPsi S_i$, and this is a forest.
  By Corollary \ref{cor:sepjoin-regseq} this is equivalent to each tree in
  $\oPsi S_i$ giving a regular sequence. But the disjoint
  union of the trees in the
  $\oPsi S_i$ are precisely the trees
  in $\oPsi S$. Hence Corollary \ref{cor:sepjoin-regseq} gives the result.
\end{proof}

The following is the converse of Proposition \ref{pro:reg-SflowT}:
 \begin{proposition} Let $S$ be a tree on $U \sus V$.
  If $S$ flows with $T$, then $L(S)$ 
  is a regular linear space.
\end{proposition}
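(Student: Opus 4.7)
The plan is to apply Corollary~\ref{cor:sepjoin-regseq} to the forest $\oPsi S$: I must show that for every facet $F_u$ of the simplicial complex of $I(T)$ (with $u\in V$) and every connected component $C$ of $\oPsi S$, at most one vertex of $C$ lies outside $F_u$.

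First I would analyze the components of $\oPsi S$ combinatorially. For a component $C$, let $C_S\sus S$ be the subgraph whose edges are those $e\in E(S)$ with $\oPsi(e)\in C$; this is a subtree of $S$. I claim that at every $v\in V(C_S)$ all incident $C_S$-edges share a common first $T$-edge from $v$, which I call $d_v$. Indeed, two $C_S$-edges at $v$ with different first $T$-edges produce $\oPsi$-edges with distinct $v$-endpoints; any $\oPsi$-path joining them corresponds to a walk in $S$ with no immediate backtracking, and (since $S$ is a tree) such a walk is a simple $S$-path, so it visits $v$ only once and the two edges would have to be consecutive there. But consecutive $\oPsi$-edges in a path must share a vertex, forcing the two first $T$-edges to coincide, contradiction. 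This step uses only that $S$ is a tree.

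The flow hypothesis enters in the next step: for any distinct $v,v'\in V(C_S)$, the $T$-path $vTv'$ starts at $v$ with $d_v$ and ends at $v'$ with $d_{v'}$. I would prove this by induction on the $C_S$-distance from $v$ to $v'$. Writing the $C_S$-path as $v=y_0,y_1,\dots,y_d=v'$, suppose for contradiction that $vTy_d$ does not start with $d_v$. Then $y_1$ (which by definition of $d_v$ lies past $d_v$ from $v$) and $y_d$ are on opposite sides of the $v$-incident edge of $vTy_d$, so $v\in y_1Ty_d$ and $\{v,y_1,y_d\}$ is $T$-aligned with $v$ in the middle. By the flow hypothesis $v\in y_1Sy_d$; but since $C_S$ is a subtree of $S$, the unique $S$-path from $y_1$ to $y_d$ is the $C_S$-subpath $y_1,\dots,y_d$, which does not contain $v$, a contradiction.

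From this I deduce that the ``home regions'' $T_v :=$ the component of $T\setminus d_v$ containing $v$ are pairwise disjoint as $v$ ranges over $V(C_S)$: if some $u$ lay in both $T_v$ and $T_{v'}$, a Steiner-point analysis of $\{v,u,v'\}$ in $T$ shows that $vTv'$ would have to avoid at least one of $d_v,d_{v'}$, contradicting the previous step. The vertices of $C$ are in bijection with $V(C_S)$, and the vertex coming from $v$ lies outside $F_u$ precisely when $u\in T_v$; hence at most one vertex of $C$ lies outside any $F_u$. By Corollary~\ref{cor:sepjoin-regseq}, $\{h_e\}_{e\in E(S)}$ is a regular sequence, so $L(S)$ is a regular linear space. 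The main obstacle is the inductive step above, where the flow hypothesis has to be combined carefully with the subtree structure of $C_S$ to pin down the first edge of $vTv'$.
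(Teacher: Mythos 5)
Your proof is correct, but it takes a genuinely different route from the paper's. The paper first uses Lemma~\ref{lem:reg-LRLS} to replace $S$ by whichever tree on $U$ flowing with $T$ is convenient, and then, for each facet $F_v$, takes the explicit tree of Corollary~\ref{cor:regtree-exist} centered at $v$: its image under $\oPsi$ decomposes into a line graph $T_0$ and stars $T_{u,g}$, each with at most one vertex outside $F_v$, so Corollary~\ref{cor:sepjoin-regseq} applies (the point being that the basis of the fixed space $L(U)$ may be re-chosen facet by facet). You instead keep the given $S$ and verify the hypothesis of Corollary~\ref{cor:sepjoin-regseq} for all facets at once, via a structural analysis of each component $C$ of $\oPsi S$: its vertices biject with the vertices of a subtree $C_S\sus S$, each $v\in V(C_S)$ carries a well-defined outward $T$-edge $d_v$, the flow hypothesis forces $vTv'$ to begin with $d_v$ and end with $d_{v'}$, and hence the regions $T_v$ are pairwise disjoint, which is exactly the at-most-one-vertex-outside-$F_u$ condition. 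Your version is more self-contained (it needs neither Lemma~\ref{lem:reg-LRLS} nor Corollary~\ref{cor:regtree-exist}) and gives finer information about the components of $\oPsi S$, at the cost of a more delicate combinatorial argument. Three small points to tighten: the assertion that $C_S$ is a subtree rests on the (easy, but unstated) observation that two edges of $\oPsi S$ sharing a vertex must come from $S$-edges sharing a vertex at which their first $T$-edges agree --- the same observation that underlies your walk-to-path correspondence; the induction announced in the second step is never actually used, since applying the flow hypothesis to the aligned triple $\{y_1,v,y_d\}$ settles the claim directly; and in the Steiner-point step the contradiction is more naturally that $u\in T_v$ forces the median of $\{v,u,v'\}$ to be $v$ while $u\in T_{v'}$ forces it to be $v'$, rather than that $vTv'$ avoids one of $d_v,d_{v'}$. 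None of these affects the validity of the argument.
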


\begin{proof}
  By Lemma  \ref{lem:reg-LRLS} above, if $U$ is the vertex set of $S$,
  we may choose $S$ to be any tree on $U$ that flows with $T$.
  
  Let $v \in V$. Consider the face
  \[ F_v = \{ (e,w^\prime) \mid w^\prime \text{ vertex on }
    e \text{ closest to } v\}. \]
Let $U \sus V$ and define
the tree $S$ flowing with $T$ with vertices in $U$ 
as in Corollary~\ref{cor:regtree-exist}. This tree comes
with two types of edges:
%
\begin{itemize}
\item Edges $\{u,w\}$ where $u$ and $w$ are two successive elements in the ordering of $U_0$.
  Then $\oPsi \{u,w\} = \{(f,u^\prime), (g,w^\prime)\}$
  where $f = \{u,u^\prime \}$ is the edge on $uTv$ going
  out from $u$ and similarly $g = \{w,w^\prime\}$
  the edge on $vTw$ going out form $w$.
  These $\{(f,u^\prime),(g,w^\prime)\}$ give a tree $T_0$ with vertices
  from the incidence relation $\iC$
  (actually a line graph). 
\item Edges $\{u^\prime,w\}$ where $u^\prime$ is the element in $U$
  on the path $vTw$ closest to $w$.
  If $f = \{w^\prime, w\}$ and $g = \{u^\prime, u \}$ are the edges on
$u^\prime T w$, one has $\oPsi \{u^\prime,w\} = \{(g,u),(f,w^\prime)\}$.
 Each such $w$ gives a unique $u^\prime$, but
 one $u^\prime$ may correspond to several $g$'s and $w$'s. For each pair
 $u^\prime,g$ these edges form a tree $T_{u^\prime,g}$, a star,
 with vertices from $\iC$.
\end{itemize}
The trees $T_0$ and $T_{u^\prime,g}$ (with vertices from $\iC$)
are all disjoint. Together the edges of these trees give all variable
differences $h_{v,w}$ for $\{v,w \}$ an $S$-edge.
The vertices of $T_0$ are contained in $F_v$. Each $T_{u^\prime,g}$ has all its
vertices save $(g,u)$  contained in $F_v$.
By Corollary~\ref{cor:sepjoin-regseq},
the linear space $L(S)$ is regular.
\end{proof}

\begin{theorem} \label{thm:regtree-part}
There is a one-to-one correspondence between regular linear spaces
for $\Spol/I(T)$ and partitions $Q$ of the vertex set\/ $V$.
If the partition of\/ $V$ is\/ $Q : U_0 \sqcup U_1 \sqcup \cdots \sqcup U_r$, then this
regular linear space is
\[ L(Q) = L(U_0) \oplus \cdots \oplus L(U_r). \]
\end{theorem}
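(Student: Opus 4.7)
The plan is to exhibit the assignment $Q \mapsto L(Q)$ and its inverse, and to verify they are mutually inverse. First, I verify that $L(Q)$ is a well-defined regular linear space. For each part $U_i$ of $Q$, Corollary~\ref{cor:regtree-exist} supplies a tree $S_i$ on $U_i$ flowing with $T$, and Lemma~\ref{lem:reg-LRLS} shows that $L(U_i) = L(S_i)$ is independent of the choice. The disjoint union $S = \bigsqcup_i S_i$ is a forest on $V$. By the converse of Proposition~\ref{pro:reg-SflowT}, each $\{h_e\}_{e \in S_i}$ is a regular sequence, and Lemma~\ref{lem:reg-graph}(b) then implies that the combined set $\{h_e\}_{e \in S}$ is a regular sequence. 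Since regular sequences of linear forms are linearly independent, $\sum_i L(U_i) = \bigoplus_i L(U_i)$, of dimension $|V| - \#Q$, and this is a regular linear space.

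For surjectivity, given any regular linear space $L$, I pick a basis $\{h_{e_i}\}$ of variable differences. The edges $e_i$ form a forest $G$ by Lemma~\ref{lem:reg-graph}(a); let its tree components be $S_1, \ldots, S_r$ on vertex sets $U_1, \ldots, U_r$. By Proposition~\ref{pro:reg-SflowT} each $S_j$ flows with $T$, and by Lemma~\ref{lem:reg-LRLS} one has $L(S_j) = L(U_j)$. Hence $L = \sum_j L(S_j) = L(Q)$ for the partition $Q$ with non-singleton parts $U_1, \ldots, U_r$ together with singletons for the remaining vertices.

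The main obstacle is injectivity, and my plan is to recover $Q$ from $L(Q)$ via the graph $G(L)$ on $V$ whose edge set is $\{\{v,w\} : h_{v,w} \in L\}$. I claim that the non-singleton parts of $Q$ coincide with the non-trivial connected components of $G(L(Q))$. The easy direction is that each non-singleton $U_i$ is connected in $G(L(Q))$ via the edges of $S_i$. For the other direction I argue by contradiction: suppose $h_{a,b} \in L(Q)$ with $a \in U_i$, $b \in U_j$, $i \neq j$. Adjoining $\{a,b\}$ to the forest $S$ from the first paragraph produces a new forest with one more edge than $\dim L(Q)$, so its associated $h$-values are linearly dependent and in particular not a regular sequence. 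Lemma~\ref{lem:reg-graph}(b) then forces one of the constituent trees to fail regularity; since only the merged tree $S'_{ij} = S_i \cup S_j \cup \{a,b\}$ has changed, $L(S'_{ij})$ cannot be regular, and the converse of Proposition~\ref{pro:reg-SflowT} (applied contrapositively) implies that $S'_{ij}$ does not flow with $T$ on $U_i \cup U_j$. Lemma~\ref{lem:reg-edge} then delivers an edge $\{v,w\} \in S'_{ij}$ and a vertex $u \in (U_i \cup U_j) \setminus \{v,w\}$ with $v,u,w$ being $T$-aligned. A short case analysis — on whether $\{v,w\}$ lies in $S_i$, in $S_j$, or equals $\{a,b\}$, and on whether $u \in U_i$ or $u \in U_j$ — either contradicts directly that $S_i$ or $S_j$ flows with $T$, or produces a pair of $h$-values in $L$ violating Lemma~\ref{lem:reg-nreg}. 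The delicate point I expect to require the most care is confirming in the surviving cases that an $h$ of the shape required by Lemma~\ref{lem:reg-nreg} genuinely lies in $L$, which is where I would need to combine the tree structure of $S'_{ij}$ with a careful choice of the intermediate vertex produced by Lemma~\ref{lem:reg-edge}.
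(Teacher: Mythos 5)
Your first two paragraphs follow the paper's proof: the forward map is built exactly as in the paper from Corollary~\ref{cor:regtree-exist}, Lemma~\ref{lem:reg-LRLS}, the proposition that flowing implies regularity, and Lemma~\ref{lem:reg-graph}; surjectivity is the paper's converse direction via Lemma~\ref{lem:reg-graph}(a) and Proposition~\ref{pro:reg-SflowT}. The paper stops there and leaves injectivity (that the two assignments are mutually inverse) implicit, so your third paragraph addresses a real point, and your reduction of it to the claim ``$h_{a,b}\in L(Q)$ forces $a,b$ into the same part'' is the right reduction (combined with the dimension count $\dim L(Q)=\sum_i(|U_i|-1)$ it does pin down the partition).

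However, your argument for that claim has a gap I do not think can be closed along the lines you indicate. Lemma~\ref{lem:reg-edge} only produces \emph{some} edge $\{v,w\}$ of $S'_{ij}$ and \emph{some} $u$ witnessing non-flowing. The cases where $u$ lies in the same part as an old edge, or where $\{v,w\}=\{a,b\}$, go through as you say. But the mixed case --- $\{v,w\}$ an edge of $S_i$ with $u\in U_j$ (or symmetrically) --- yields nothing: the element $h_{v,u}$ you would need for Lemma~\ref{lem:reg-nreg} joins two different parts and is in general \emph{not} in $L$; indeed the configuration ``$S_i$-edge $\{v,w\}$ with a $U_j$-vertex on $vTw$'' occurs for perfectly regular $L(Q)$ (take $T$ the path $1{-}2{-}3$ and $Q=\{1,3\}\sqcup\{2\}$). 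In fact merging two parts essentially always destroys flowing for this trivial reason, so ``$S'_{ij}$ does not flow'' carries no information about the hypothesis $h_{a,b}\in L(Q)$, and the mixed case may be the only witness the lemma provides. The correct mechanism is the one used in the proof of Lemma~\ref{lem:regsq-ind}: $L(Q)$ is spanned by the edge-differences of the graph $\oPsi S=\bigsqcup_k\oPsi S_k$ on $E(T)\times\{0,1\}$, and these graphs have pairwise disjoint vertex sets, so a difference of two variables lies in $L(Q)$ only if both variables lie in one connected component of one $\oPsi S_k$. Writing $h_{a,b}=x_{e,\overline{e_\TO(a)}}-x_{f,\overline{f_\TO(b)}}$, any edge of $\oPsi S$ containing $(e,\overline{e_\TO(a)})$ must be $\oPsi\{a,d\}$ for an $S$-edge $\{a,d\}$ with $d$ beyond $e$, which forces $a\in U_k$; likewise $b\in U_k$. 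That closes the argument; as written, your proposal does not.
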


\begin{proof}
By Lemma \ref{lem:reg-LRLS}, each $U_i$ determines a unique linear space
$L(U_i)$. If $S_i$ is a tree on $U_i$ flowing with $T$, then $L(S_i) = L(U_i)$.
Let $S = \cup_{i=0}^rS_i$. By Lemma~\ref{lem:reg-graph}, the edges of $\oPsi S$ give a regular
sequence. This regular sequence is a basis for $L(Q)$.

Conversely if $L$ is a regular linear space generated by the regular
elements $\{h_e \}_{e \in G}$ for some graph $G$ on $V$,
by Lemma \ref{lem:reg-graph} the graph $G$ decomposes into a forest and
we get a partition of $V$ where
each $U_i$ is the vertex set of each tree in the forest.
(The vertices $v$ of $V$ not incident to any edge of $G$ give
singletons $\{v\}$ in the partition.)
\end{proof}

\begin{corollary}
  The length of the longest regular sequence of variable differences
  for\/ $\Spol/I(T)$ is $|E|$. Such a sequence
corresponds
to the trivial partition of\/ $V$ with only one part, the set\/ $V$ itself.
The corresponding
tree that flows with $T$ is just $T$ itself. Hence this regular
sequence is given by $\{h_e\}_{e \in E}$ and the quotient ring
is $k[x_E]/(x_e \, | \, e \in E)^2$.
\end{corollary}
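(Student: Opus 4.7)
The plan is to deduce everything from Theorem~\ref{thm:regtree-part}. By that theorem, every regular sequence of variable differences for $\Spol/I(T)$ generates a regular linear space of the form $L(Q) = \bigoplus_{i=0}^{r} L(U_i)$ for some partition $Q \colon V = U_0 \sqcup \cdots \sqcup U_r$. Each summand $L(U_i)$ equals $L(S_i)$ for any tree $S_i$ on $U_i$ flowing with $T$, and such a tree has $|U_i| - 1$ edges, so $\dim L(U_i) = |U_i| - 1$. Adding up, $\dim L(Q) = \sum_i (|U_i| - 1) = |V| - (r+1)$, which is maximized precisely when $r = 0$, i.e., when $Q$ has the single part $U_0 = V$. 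This maximum is $|V| - 1 = |E|$, since $T$ is a tree.

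Next I would identify the regular sequence explicitly. The tree $T$ itself flows with $T$ trivially: every $T$-aligned triple in $V$ is $T$-aligned. Hence by Lemma~\ref{lem:reg-LRLS}, $L(V) = L(T) = \langle h_e \rangle_{e \in E}$, and this is a regular linear space of dimension $|E|$.

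Finally I would compute the quotient ring $\Spol/(I(T) + (h_e)_{e \in E})$. Fix any orientation of $T$. For an edge $e = \{v,w\}$ the variable difference $h_e = h_{v,w} = x_{e,\overline{e_\TO(v)}} - x_{e,\overline{e_\TO(w)}} = x_{e,0} - x_{e,1}$, since $\overline{e_\TO(v)}$ and $\overline{e_\TO(w)}$ are $0$ and $1$ in some order. Modding out by these identifies $x_{e,0}$ with $x_{e,1}$ to a single variable $x_e$, so the ambient ring becomes $k[x_e]_{e \in E}$. Under this identification each generator $m_{v,w} = x_{e, e_\TO(v)} x_{f, f_\TO(w)}$ of $I(T)$ maps to $x_e x_f$, where $e,f$ are the edges at the endpoints of the path $vTw$. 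The last (and only substantive) step is to verify that every product $x_e x_f$ with $e, f \in E$ arises this way: given any two edges (possibly equal), take $v$ to be the endpoint of $e$ farther from $f$ and $w$ the endpoint of $f$ farther from $e$; then $e, f$ are the edges incident to $v, w$ on the path $vTw$. Consequently the image of $I(T)$ equals $(x_e \mid e \in E)^2$, giving the quotient $k[x_E]/(x_e \mid e \in E)^2$ as claimed.

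The only mild obstacle is the bookkeeping in the last step: checking that after the identification the map $\Psi$ descends to the ``every unordered pair of edges'' map, which is immediate from the description of $m_{v,w}$ but worth stating explicitly.
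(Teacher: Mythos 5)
Your proposal is correct and follows essentially the same route the paper intends: the corollary is meant to be read off from Theorem~\ref{thm:regtree-part} via the dimension count $\dim L(Q)=\sum_i(|U_i|-1)=|V|-(\text{number of parts})$, maximized by the one-part partition, together with the trivial observation that $T$ flows with itself so $h_e=x_{e,0}-x_{e,1}$ and the image of each $m_{v,w}$ is $x_ex_f$ for the extremal edges of $vTw$. Your explicit verification that every pair $\{e,f\}$ of edges arises this way is a worthwhile detail the paper leaves implicit (it is also guaranteed by the cited fact from \cite{AFL} that $I(T)$ polarizes $(x_e)_{e\in E}^2$).
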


\section{Squarefree quotients}
\label{sec:regsqfree}
We determine what regular linear spaces give quotient rings of $\Spol/I(T)$
whose associated ideals are squarefree.
These are the Stanley--Reisner rings of stacked simplicial complexes.
Let $T$ be a tree with
vertices $V$.

\begin{lemma} \label{lem:regsq-ind} Let $U \sus V$ 
and let $S$ be a tree on $U$.
If\/ $S$ flows with\/ $T$, then the regular quotient
of\/ $\Spol/I(T)$ by $\{h_e \}_{e \in S}$ is
a squarefree monomial ideal iff the vertex set $U$ is
an independent vertex set in $V$ for the tree\/ $T$.
\end{lemma}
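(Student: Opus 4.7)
The plan is to prove the two implications separately: the easy direction via Lemma~\ref{lem:reg-hvw}, and the hard direction via a combinatorial analysis of the equivalence classes on variables induced by $\{h_e\}_{e \in E(S)}$.

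For $(\Rightarrow)$: if $U$ is not independent, pick $v,w \in U$ that are $T$-adjacent along an edge $e = \{v,w\}$. The path $vTw$ reduces to the single edge $e$ with no interior vertex, so Lemma~\ref{lem:reg-hvw} gives $h_{v,w} \in L(S)$. Using $\overline{e_\TO(v)} = e_\TO(w)$ one sees that $h_{v,w}$ identifies the two endpoint-labels of $e$. Since $m_{v,w} = x_{e,e_\TO(v)} x_{e,e_\TO(w)}$ is a generator of $I(T)$, it becomes a square in the quotient, which is therefore not squarefree.

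For $(\Leftarrow)$, assume $U$ is independent. The first observation is that each $h_{v,w}$ with $\{v,w\} \in E(S)$ identifies only variables of the form $(g, g_\TO(u))$ where $g$ is a $T$-edge with one endpoint $v^* \in U$ and the other endpoint $u \notin U$, and the label is at the $u$-end. Indeed, because $S$ flows with $T$ and $U$ is independent, the vertices of $vTw$ adjacent to $v$ and $w$ are non-$U$, and these are exactly the vertices from which $\oPsi(\{v,w\})$ takes its labels. Hence every ``$U$-end label'' $(g, g_\TO(v^*))$ with $v^* \in U$ remains a singleton in the equivalence relation. To show squarefreeness it then suffices to check, for each generator $m_{v',w'} = x_{e,e_\TO(v')} x_{f,f_\TO(w')}$ of $I(T)$, that its two variables lie in distinct equivalence classes. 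If $v' \in U$ or $w' \in U$, at least one variable is a singleton and a direct check (using $e_\TO(v') + e_\TO(w') = 1$ whenever $e = f$) shows the two cannot coincide; similarly, if $v',w' \notin U$ but one of the vertices $v'_2, w'_2$ of $v'Tw'$ adjacent to $v'$ and $w'$ fails to lie in $U$, then the corresponding variable is not in the image of $\oPsi(S)$ and the same direct check applies.

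The essential remaining case is $v',w' \notin U$ with $v'_2, w'_2 \in U$. I encode each image variable as a pair $(v^*, u)$ consisting of its $U$-endpoint and its non-$U$-neighbor, turning $\oPsi(S)$ into a graph whose edges come from $S$-edges $\{a,a'\}$ (joining $(a,b)$ with $(a',b')$ where $b,b'$ are the second vertices of $aTa'$ from each side). A walk $(v'_2, v') = (a_0, b_0), \dots, (a_n, b_n) = (w'_2, w')$ projects to an $S$-walk $v'_2 = a_0, \dots, a_n = w'_2$ in which the compatibility at each intermediate $a_i$ forces $a_{i-1}$ and $a_{i+1}$ to lie in the same component of $T \setminus a_i$ (the $b_i$-component).

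Writing $C_{v'}$ for the component of $T \setminus v'_2$ containing $v'$, I then argue by induction on $i$ that every $a_i$ with $i \geq 1$ lies in $C_{v'} \cup \{v'_2\}$. The inductive step uses two facts: if $a_i \neq v'_2$, an $S$-edge whose $T$-path leaves $C_{v'} \cup \{v'_2\}$ would force $v'_2 \in U$ into the interior of that $T$-path, violating that $S$ flows with $T$; if $a_i = v'_2$, the compatibility constraint forces $a_{i+1}$ to lie in the same side of $v'_2$ as $a_{i-1}$, which by induction is $C_{v'}$. But the last step of the walk forces $a_{n-1}$ to lie in the $w'$-component of $T \setminus w'_2$, which is disjoint from $C_{v'} \cup \{v'_2\}$ (both in the case $v'_2 = w'_2$, where the $v'$- and $w'$-components of a common vertex are distinct, and in the case $v'_2 \neq w'_2$, where $w'_2$ itself is separated from $C_{v'}$ by $v'_2$), giving the contradiction. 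Formalising this side-preservation argument is the main obstacle; the other cases are routine consequences of the first observation.
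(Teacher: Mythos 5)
Your proof is correct and follows essentially the same route as the paper: the variables of the quotient are the connected components of $\oPsi S$, dependence of $U$ immediately produces $x_{e,0}x_{e,1}\mapsto x_e^2$, and in the converse direction a path in $\oPsi S$ joining the two variables of a generator of $I(T)$ is shown to be incompatible with $S$ flowing with $T$ via Lemma~\ref{lem:reg-edge}. Your side-preservation induction along the lifted walk in $S$ is a more detailed substitute for the paper's terse assertion that such a connection would force an $S$-edge with endpoints on the two outer components determined by the generator's $T$-path.
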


\begin{proof} The following is essential to note:
  The variables in the quotient ring modulo the sequence
  $\{h_e\}_{e \in S}$ correspond precisely to
  the connected components of the graph $\oPsi S$
  with vertex set the incidence relation $\iC$.

  If the vertex set $U$ is dependent, say contains end vertices of an
  edge $e = \{v,w\}$, then we divide out by $x_{e,v} - x_{e,w}$ and
  the ideal of the quotient ring will contain $x_e^2$
  as a generator and so is not squarefree.
  
Suppose then that $U$ is independent.
Let $\{v,w \}$ be a pair of vertices in $U$. 
  Suppose the associated monomial $x_{e,v}x_{f,w}$ becomes
    a square after dividing out by the regular sequence.
    This means that $ (e,v)$ and $(f,w)$ are in the same
    connected component of $\oPsi S$. Let the edge $e$ have vertices
    $v,v^\prime$ and
    the edge $f$ vertices $w^\prime, w$.
    So $v^\prime$ and $w^\prime$ are on the path
    $vTw$. Removing the edge $e$ from $T$ we get a component
    $T_v$ containing $v$,
    and similarly removing $f$ from $T$ we get a component $T_w$ containing $w$.

    Any edge in $\oPsi S$ containing $(e,v)$ is the image of an edge
    $\{\overline{v},v^\prime \}$ in $S$ where $\overline{v} \in T_v$.
    Similarly we have an edge $\{w^\prime,  \overline{w} \}$ in $S$
    where $\overline{w} \in T_w$. But since $(e,v)$ and
    $(f,w)$ are in the same connected component of $\oPsi S$,
    there must in $S$ be an edge $\{ \tilde{v}, \tilde{w} \}$ where $\tilde{v}$
    is in $T_v$ and $\tilde{w}$ is in $T_w$.
    Then either $v^\prime$ or $w^\prime$ from $U$  is
    in the interior of the path $\tilde{v}T \tilde{w}$.
    Since $S$ flows with $T$ this cannot be the case by
    Lemma~\ref{lem:reg-edge}.  \end{proof}

\begin{theorem} \label{thm:regsq-partind}
There is a one-to-one correspondence between regular linear spaces
for\/ $\Spol/I(T)$ giving squarefree quotient rings, and partitions of\/
$V$ into sets of independent vertices.
\end{theorem}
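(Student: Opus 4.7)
By Theorem~\ref{thm:regtree-part}, regular linear spaces for $\Spol/I(T)$ are in bijection with partitions $Q: V = U_0 \sqcup \cdots \sqcup U_r$ via $L(Q) = \bigoplus_i L(U_i)$. It therefore suffices to show that the quotient by $L(Q)$ is squarefree if and only if every $U_i$ is independent in $T$. The ``only if'' direction is immediate: if $U_i$ contains the two endpoints $v,w$ of a $T$-edge $e$, then Lemma~\ref{lem:reg-hvw} gives $h_{v,w}\in L(U_i)$, and because $e_\TO(v)$ and $e_\TO(w)$ take opposite values we have $h_{v,w}=\pm(x_{e,0}-x_{e,1})$. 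Modding out identifies $x_{e,0}$ with $x_{e,1}$ and converts the generator $m_{v,w}=x_{e,0}x_{e,1}$ into a square.

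For the converse, assume each $U_k$ is independent, choose a tree $S_k$ on each $U_k$ flowing with $T$, and set $S=\sqcup_k S_k$. The plan is to show that for every $\{v,w\}\in (V)_2$ the two vertices $(e,e_\TO(v))$ and $(f,f_\TO(w))$ of $\Psi\{v,w\}$ lie in different connected components of $\oPsi S$. The central tool is a \emph{key endpoint principle}: each $(e,a)\in E(T)\times\{0,1\}$ has a canonically associated endpoint of $e$ in $T$, namely the one whose $e_\TO$-value equals $\bar a$, and every $\oPsi S$-edge through $(e,a)$ arises from some $S$-edge that has this key endpoint as one of its two vertices, with the other vertex lying in the component of $T\setminus\{e\}$ on the opposite side. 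Applied to $(e,e_\TO(v))$ the key endpoint is the other endpoint $v'$ of $e$; all $\oPsi S$-edges through $(e,e_\TO(v))$ therefore come from $S_c$, where $c$ is the unique index with $v'\in U_c$, and their other endpoints lie in the $T$-component $T_v$ of $T\setminus\{e\}$ containing $v$. Symmetrically $(f,f_\TO(w))$ involves $S_d$ with $w'\in U_d$ and $T_w$. Because key endpoints are uniquely determined, the vertex sets of the subgraphs $\oPsi S_k$ are pairwise disjoint in $E(T)\times\{0,1\}$.

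Independence of $U_c$ rules out $v\in U_c$ (otherwise $v$ and $v'$ would both lie in $U_c$ and span the $T$-edge $e$), and similarly $w\notin U_d$. If one of the two $\oPsi$-vertices is isolated in $\oPsi S$, or if $c\neq d$, the disjointness above immediately places them in different components. The remaining case is $c=d$, so both $\oPsi$-vertices live in $\oPsi S_c$ with $v',w'\in U_c$, and here I would adapt the argument of Lemma~\ref{lem:regsq-ind}. A hypothetical $\oPsi S_c$-path between the two vertices projects via key endpoints to a walk in $S_c$ from $v'$ to $w'$ that uses each $S_c$-edge at most once (since $\oPsi$ is injective on edges and the $\oPsi$-path is simple), so in the tree $S_c$ this walk must coincide with the unique $v'\to w'$ path. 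The key endpoint principle forces the first edge of this $S_c$-path to enter $T_v$. But by Lemma~\ref{lem:reg-edge}, any $S_c$-edge whose $T$-path crosses $e$ must have $v'$ as an endpoint (else $v'\in U_c$ would lie in the interior of that $T$-path). Since $v'$ is the initial vertex of the $S_c$-path and is visited only there, no subsequent edge can cross $e$ back out of $T_v$, trapping the path inside $T_v$ and contradicting that it must reach $w'\notin T_v$.

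The main technical hurdle I foresee lies in this last step: verifying rigorously that the walk in $S_c$ induced by the hypothetical $\oPsi S_c$-path is edge-simple (hence the unique $v'\to w'$ path in $S_c$), and extracting the ``$v'$ is the only $e$-crossing route'' conclusion from the flow property of $S_c$. Once these two points are pinned down, the remaining cases collapse cleanly, either because independence of some $U_k$ is violated or because the $\oPsi S_k$ have pairwise disjoint vertex sets.
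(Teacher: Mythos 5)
Your proposal is correct and follows essentially the same route as the paper: reduce via Theorem~\ref{thm:regtree-part} to a part-by-part squarefreeness criterion, treat a dependent part via $h_{v,w}=\pm(x_{e,0}-x_{e,1})$, and in the independent case show the two vertices of $\Psi\{v,w\}$ lie in distinct components of $\oPsi S$ using the key-endpoint principle and the flow property --- which is exactly the content and proof of the paper's Lemma~\ref{lem:regsq-ind}. The two details you flag do go through: distinct $\oPsi S_c$-edges come from distinct $S_c$-edges (the key endpoints recover the $S$-edge), so an edge-distinct walk in the tree $S_c$ from $v'$ to $w'$ is the unique path between them, and Lemma~\ref{lem:reg-edge} together with $v'\in U_c$ forces any $S_c$-edge leaving $T_v$ to have $v'$ as its outside endpoint, which yields your trapping contradiction.
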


\begin{proof}
  Suppose we have a squarefree quotient ring. 
  Each part
  $U_i$ of the partition gives a regular linear space $L(U_i)$.
  By Lemma~\ref{lem:regsq-ind}, $U_i$ is independent.
  Conversely, if we have a partition of $V$ into independent sets $U_i$,
  let $S_i$ be a tree on $U_i$ flowing with $T$. 
  The images $\oPsi S_i$ have disjoint vertex sets as $i$ varies.
  Lemma \ref{lem:regsq-ind} 
  above shows that the quotient is squarefree.
  \end{proof}

  Using Theorem \ref{thm:part-VE},
  the above may equivalently be formulated as follows:
  
  \begin{theorem} \label{thm:regsq-partedge}
    There is a one-to-one correspondence between regular linear spaces
    for\/ $\Spol/I(T)$ giving squarefree quotient rings, and partitions of
    the edge set\/ $E$.
  \end{theorem}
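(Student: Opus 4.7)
The plan is to derive this theorem by composing two bijections already at hand, so essentially no new work is required beyond a careful invocation.

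First, I would recall that Theorem \ref{thm:regsq-partind} already establishes a bijection between the regular linear spaces for $\Spol/I(T)$ whose quotient rings are squarefree, and partitions of the vertex set $V=V(T)$ into sets of independent vertices. This is the main substantive input: the analysis of $L(S)$ for $S$ flowing with $T$, together with the independence criterion proved in Lemma \ref{lem:regsq-ind}.

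Next, I would invoke Theorem \ref{thm:part-VE} from the appendix (which recalls the result of \cite{FlPart}). That theorem asserts a one-to-one correspondence between partitions of $V(T)$ into $r+1$ independent vertex sets and partitions of $E(T)$ into $r$ parts. Composing this bijection with the one supplied by Theorem \ref{thm:regsq-partind} yields the desired correspondence between regular linear spaces giving squarefree quotients and partitions of $E(T)$.

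The only point worth noting is bookkeeping: under the composition, a partition $V = U_0 \sqcup \cdots \sqcup U_r$ into $r+1$ independent sets produces a partition of $E(T)$ with exactly $r$ parts, which will be the statement used in Theorem \ref{thm:regsq-partedge} and its ``dimension of the simplicial complex is one less than the number of parts'' refinement. Since the theorem as stated here only claims the bare correspondence, there is no obstacle: it follows immediately by composition, and I would write the proof as a single-line appeal to the two prior theorems.
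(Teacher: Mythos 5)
Your proposal is correct and matches the paper exactly: the paper introduces Theorem~\ref{thm:regsq-partedge} with the single remark that ``Using Theorem~\ref{thm:part-VE} the above may equivalently be formulated as follows,'' i.e., it too obtains the result by composing the bijection of Theorem~\ref{thm:regsq-partind} with the vertex-partition/edge-partition correspondence of the appendix. Your bookkeeping remark about $r+1$ independent vertex parts versus $r$ edge parts is also consistent with how the paper uses the correspondence later.
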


   If $P$ is a partition of the edge set corresponding to the
   partition $Q$ into independent vertex sets, write
   $L(P) = L(Q)$.

\begin{corollary}
The length of the longest regular sequence of variable differences
giving a squarefree quotient of\/ $\Spol/I(T)$ is
$|E| - 1$. It corresponds to the unique partition of\/ $V$ into two
independent sets of\/ $V$ for the tree\/ $T$. Thus the associated regular
linear space is also unique.
\end{corollary}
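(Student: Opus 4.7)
My plan is to combine Theorem~\ref{thm:regsq-partind} with the elementary fact that trees are bipartite with a unique bipartition. By that theorem, regular linear spaces giving squarefree quotients correspond bijectively to partitions $Q: V = U_0 \sqcup U_1 \sqcup \cdots \sqcup U_r$ into independent vertex sets. For such a partition, the associated linear space is $L(Q) = L(U_0) \oplus \cdots \oplus L(U_r)$, where each $L(U_i)$ is spanned by the $h_e$ for $e$ an edge of a tree on $U_i$. Thus $\dim L(U_i) = |U_i| - 1$, and so the length of the corresponding regular sequence is
\[ \dim L(Q) = \sum_{i=0}^{r} (|U_i|-1) = |V| - (r+1). \]

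To maximize this length I would minimize the number of parts $r+1$ in a partition of $V$ into independent sets. First I would note that if $T$ has at least one edge, then $V$ itself is not independent, so $r+1 \geq 2$. Next I would invoke that a tree is bipartite, so there does exist a partition of $V$ into exactly two independent sets (the bipartition classes); this achieves the minimum $r+1 = 2$. Since $|V| = |E|+1$ for a tree, the maximum length is $|E|+1-2 = |E|-1$, as claimed. The degenerate case $|E|=0$ (a single-vertex tree) is trivial, since then $\Spol/I(T) = k$ and the longest sequence has length $0 = |E|-1$ becomes $-1$; this case should be excluded or handled as vacuous.

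For uniqueness I would use the standard fact that a connected graph has a unique bipartition (into two unordered classes), obtained by assigning parities to vertices using distances from any fixed basepoint. Hence there is a \emph{unique} partition of $V$ into two independent sets, and by Theorem~\ref{thm:regsq-partind} the corresponding regular linear space $L(Q)$ is also unique. No step here looks hard: the only thing to be careful about is the bookkeeping between ``number of parts'' and the dimension formula, and the observation that amongst partitions into independent sets the minimal-cardinality one is forced to be the bipartition because $T$ is connected.
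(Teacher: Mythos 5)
Your proposal is correct and follows the argument the paper intends: the dimension count $\dim L(Q)=\sum_i(|U_i|-1)=|V|-(r+1)$ coming from Theorem~\ref{thm:regtree-part}, minimization of the number of independent parts to two via bipartiteness of $T$, and uniqueness of the bipartition of a connected graph (equivalently, via Theorem~\ref{thm:regsq-partedge}, the unique one-part partition of $E$). The only cosmetic caveat is your handling of the edgeless tree, which is indeed best excluded.
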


\section{Partial order on partitions}
\label{sec:part2}

   If $Q$ and $Q^\prime$ are partitions of the vertex set $V$ of a tree $T$,
   we get the linear
   spaces $L(Q)$ and $L(Q^\prime)$. What does the inclusion relation on
   linear spaces correspond to on partitions? Since the linear spaces depend
   on additional
   structure coming from the tree $T$, this is not simply refinement of
   partitions.

\subsection{Partitions of the vertex set}

\begin{definition} Let $U^\prime \sus U \sus V$. Then $U^\prime$ is
{\em convex} in $U$ if for every $v,w \in U^\prime$, all vertices on the
path $vTw$ that are contained in $U$ are  in $U^\prime$.
\end{definition}

Note that such a $U^\prime$ may be convex in some $U$ while not being convex in $V$.

\begin{lemma} Let\/ $U^\prime$ and\/ $U$ be subsets of\/ $V$. If $L(U^\prime)$
  is a subspace of\/ $L(U)$ then \/ $U^\prime$ is a convex subset of\/ $U$,
  or $U^\prime$ is a singleton (then $L(U^\prime) = 0$).
Conversely if\/ $U^\prime \sus U$ is a convex subset, then $L(U^\prime)$
is a subspace of\/ $L(U)$. 
\end{lemma}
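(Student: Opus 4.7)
The plan is to prove each direction separately.

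For the converse direction, I would fix a tree $S'$ on $U'$ flowing with $T$ and show that each generator $h_{v,w}$ of $L(S')=L(U')$ lies in $L(U)$. For an $S'$-edge $\{v,w\}$, the $T$-path $vTw$ contains no $U'$-vertex in its interior (by the flow property of $S'$); by convexity of $U'$ in $U$, it also contains no vertex of $U\setminus U'$ in its interior; hence $vTw$ has no $U$-vertex in its interior at all. Applying Lemma~\ref{lem:reg-hvw} to any tree $S$ on $U$ flowing with $T$ then yields $h_{v,w}\in L(S)=L(U)$, and $L(U')\sus L(U)$ follows.

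For the forward direction, assume $L(U')\sus L(U)$. The case $|U'|\leq 1$ gives $L(U')=0$ and is immediate, so I assume $|U'|\geq 2$. The first step is to show $U'\sus U$: pick an $S'$-edge $\{v,w'\}$, so $h_{v,w'}=x_a-x_b\in L(U)$ with $a=v\to v_1$ the directed edge at $v$ (in the $E(T)\times\{0,1\}$ picture). Since $L(U)$ is spanned by the edge-differences of the graph $\oPsi S$ on $A$, the difference $x_a-x_b$ lies in $L(U)$ only if $a$ and $b$ lie in the same connected component of $\oPsi S$; in particular $a$ must occur as a vertex of $\oPsi S$. But every vertex of $\oPsi S$ is of the form $u\to u'$ with $u\in U$, forcing $v\in U$.

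The second step is to prove convexity. Suppose instead that $U'$ is not convex in $U$; then I can choose $v,w\in U'$ that are $U'$-consecutive along $vTw$, together with $y_1\in U\setminus U'$ the first $U$-vertex after $v$ on $vTw$. Now $\{v,w\}$ is a valid $S'$-edge, so $h_{v,w}=x_a-x_b\in L(U)$ with $a=v\to v_1$ and $b=w\to w_1$ the directed $T$-edges at $v$ and $w$ along $vTw$. I will derive a contradiction by showing that $a$ and $b$ cannot lie in the same component of $\oPsi S$ for any tree $S$ on $U$ flowing with $T$.

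The key device is an invariant $\psi$ taking values in the set of connected components of $T\setminus\{y_1\}$. For a directed edge $u\to u'$ with $u\in U$, set $\psi(u\to u')$ to be the component of $T\setminus\{y_1\}$ containing $u$ when $u\neq y_1$, and the component containing $u'$ when $u=y_1$. The main content is verifying that $\psi$ takes the same value at both endpoints of each $\oPsi S$-edge: for an $S$-edge $\{u_1,u_2\}$ the path $u_1Tu_2$ has no $U$-vertex in its interior, so $y_1$ is either equal to $u_1$ or $u_2$, or disjoint from $u_1Tu_2$ altogether, and a short case analysis handles both possibilities. On the other hand $\psi(a)$ is the component of $T\setminus\{y_1\}$ containing $v$ whereas $\psi(b)$ is the one containing $w$, and these differ because $y_1$ lies in the interior of $vTw$ and hence separates $v$ and $w$ in $T$. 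The case analysis for the invariance of $\psi$ is the main obstacle; once it is in hand the rest is immediate.
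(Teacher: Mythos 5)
Your proof is correct, and while the converse direction and the step $U'\sus U$ essentially coincide with the paper's argument (the paper also deduces $U'\sus U$ by observing that the variable $x_{e,\overline{e_\TO(v)}}$ attached to the $v$-end of the first edge of $vTw'$ cannot occur in any generator of $L(U)$ when $v\notin U$, which is your connected-component observation in different words), your treatment of the convexity step is genuinely different. The paper reduces to a $T$-aligned triple $v,u,w$ with $h_{v,w}\in L(U')\sus L(U)$ and $h_{v,u}\in L(U)$ (both via Lemma~\ref{lem:reg-hvw}) and then invokes Lemma~\ref{lem:reg-nreg} to contradict the \emph{regularity} of $L(U)$; this tacitly uses that any two linearly independent elements of a regular linear space of linear forms form a regular sequence. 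You instead prove outright that $h_{v,w}=x_a-x_b$ does \emph{not} lie in $L(U)$, by exhibiting a function $\psi$ on the vertices of $\oPsi S$, valued in the components of $T\setminus\{y_1\}$, that is constant on $\oPsi S$-edges but separates $a$ from $b$. I checked your invariance claim: for an $S$-edge $\{u_1,u_2\}$ the flow property (via Lemma~\ref{lem:reg-edge}) forces $y_1$ to be an endpoint of $u_1Tu_2$ or off the path entirely, and both cases give $\psi(u_1\to p)=\psi(u_2\to q)$ equal to the component containing $u_1Tu_2\setminus\{y_1\}$; and $y_1$ does separate $v$ from $w$. Your route is purely linear-algebraic and self-contained (it never appeals to the ring $\Spol/I(T)$ or to regularity), at the price of the invariant construction; the paper's is shorter given its earlier machinery but leans on an unstated fact about regular linear spaces. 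Two small points to make explicit in a write-up: every $v\in U'$ is incident to some edge of $S'$ (so the argument for $U'\sus U$ really covers all of $U'$), and after shrinking to a $U'$-consecutive pair $v,w$ you must note that a vertex of $U\setminus U'$ still lies strictly between them, which follows by bracketing the witness $u$ of non-convexity.
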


\begin{proof} Suppose $L(U^\prime)$ is a nonzero subspace of $L(U)$
  and there exists
  $v \in U^\prime \setminus U$.  There is another $w \in U^\prime$ such
  that $h_{v,w} \in L(U^\prime)$. 
Consider the path $vTw$ in $T$:
\[\begin{tikzpicture}[dot/.style={draw,fill,circle,inner sep=1pt},scale=.8]
\fill (0,0)  circle (.09);
\fill (1,0)  circle (.09);
\fill (2,0)  circle (.09);
\fill (3,0)  circle (.09);
\draw[very thick] (0,0)--(.91,0);
\draw[very thick,dotted] (1,0)--(2,0);
\draw[very thick] (3,0)--(2,0);
\coordinate [label=above: $v$] (v) at (0,0);
\coordinate [label=below: $e$] (e) at (.5,0);
\coordinate [label=above: $v'$] (v') at (1,0);
\coordinate [label=below: $f$] (f) at (2.5,0);
\coordinate [label=above: $w$] (w) at (3,0);
\coordinate [label=above: $w^\prime$] (wp) at (2,0);
\end{tikzpicture}\]
Then $h_{v,w}  = x_{e,v^\prime} - x_{f,w^\prime}$ is in $L(U^\prime)$.
If the edge $e$ occured in some $h_{a,b}$
generating $L(U)$, since $v \not \in U$,
one of $a$ or $b$ would have to be $v^\prime$, and
$v^\prime \in U$. But then this $h_{a,b}$ would contain $x_{e,v}$ instead
of $x_{e,v^\prime}$. Hence $U^\prime \sus U$. 

Let us show that $U'$ is convex in $U$. Let $v,w \in U^\prime$ be such that $vTw$ contains some $u \in U \setminus
U^\prime$. By possibly moving $v$ and $w$ closer to $u$, and $u$ closer
to $v$, we may assume on $vTw$ that
$v$ and  $w$ are the only vertices in $U^\prime$, and on $vTu$ that $v$ and $u$
are the only
vertices in $U$. But then $h_{v,w} \in L(U^\prime)$ and $h_{v,u}\in L(U)$ by Lemma \ref{lem:reg-hvw}. If $L(U^\prime) \sus L(U)$ this could not
be the case by Lemma \ref{lem:reg-nreg},
since $L(U)$ is regular. Hence, if we have inclusion,
$U^\prime$ must be convex in $U$. 

Conversely if $U^\prime$ is convex in $U$, then letting $S^\prime$ be
a tree on $U^\prime$ flowing with $T$, by Lemma \ref{lem:reg-hvw},
for each edge $e$ in $S^\prime$ we have $h_e \in L(U)$.
\end{proof}

The following is immediate from the above.

\begin{theorem}
Let $Q^\prime$ and\/ $Q$ be partitions of\/ $V$. Then
$L(Q^\prime) \sus L(Q)$ iff each part\/ $U_i$ of\/ $Q$ is a union of 
parts of\/ $Q^\prime$ which are convex for $U_i$.
Write then $Q^\prime \ord Q$.   
\end{theorem}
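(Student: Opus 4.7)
The plan is to reduce the theorem to the preceding lemma, which handles a single pair $U^\prime \sus U$. The direction $(\Leftarrow)$ is immediate: if each part $U_i$ of $Q$ is a union $\bigcup_{j \in J_i} U'_j$ with every $U'_j$ convex for $U_i$, then by the lemma $L(U'_j) \sus L(U_i)$, so summing over $i$ and $j$ gives $L(Q') \sus L(Q)$.

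For the direction $(\Rightarrow)$, the key preliminary observation I would establish is that the decomposition $L(Q) = \bigoplus_i L(U_i)$ is realised on pairwise disjoint sets of variables. The point is that each variable $(e,\alpha) \in E(T) \times \{0,1\}$ canonically picks out one endpoint of $e$: the vertex $v$ with $\overline{e_\TO(v)} = \alpha$. Now any generator $h_{v,w}$ of $L(U_i)$ is of the form $x_{e,\overline{e_\TO(v)}} - x_{f,\overline{f_\TO(w)}}$, so its two variables are exactly the ones associated to $v$ and to $w$. Hence the variables appearing in $L(U_i)$ all correspond to vertices of $U_i$; since the parts $U_i$ are pairwise disjoint, so are these variable supports. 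This promotes the direct sum to a projection-by-variable-support decomposition, and in particular any element of $L(Q)$ whose variables all correspond to $U_i$ must already lie in $L(U_i)$.

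With this in hand, fix a part $U'_j$ of $Q'$ (nontrivial, else there is nothing to check) and pick an edge $\{v',w'\}$ of a flowing tree $S'_j$ on $U'_j$. Then $h_{v',w'} = x_a - x_b$ belongs to $L(U'_j) \sus L(Q)$, where $a$ is associated to $v'$ and $b$ to $w'$. If $v'$ and $w'$ lay in distinct parts $U_{i_0}, U_{i_1}$ of $Q$, decomposing $h_{v',w'} = \sum_i \ell_i$ with $\ell_i \in L(U_i)$ and projecting onto the variables associated to $U_{i_0}$ would force $\ell_{i_0} = x_a$. This contradicts the fact that every element of $L(U_{i_0})$ has coefficient sum zero, since each generator $h_e$ does. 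Hence $v'$ and $w'$ share a part of $Q$; traversing the edges of $S'_j$ by transitivity then yields $U'_j \sus U_i$ for some $i$. Combined with the support observation, this gives $L(U'_j) \sus L(U_i)$, and the preceding lemma upgrades the inclusion to convexity of $U'_j$ in $U_i$.

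The main obstacle is the variable-support disjointness of the summands $L(U_i)$: one must carefully unpack the definition of $h_{v,w}$ through the map $\oPsi$ and verify the canonical pairing between a variable $(e,\alpha)$ and an endpoint of $e$. Once this bookkeeping is in place, the remainder of the argument is pure linear algebra over the direct sum combined with one application of the previous lemma.
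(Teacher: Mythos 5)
Your proof is correct, and it takes the route the paper intends: the paper offers no argument beyond declaring the theorem ``immediate'' from the preceding lemma on a single pair $U'\sus U$, and your write-up supplies exactly the bookkeeping that deduction actually requires. The key point you add --- that each variable $(e,\alpha)$ corresponds to the unique endpoint $v$ of $e$ with $\overline{e_{\TO}(v)}=\alpha$, so the summands $L(U_i)$ are supported on pairwise disjoint variable sets --- is sound (it is the same device the paper uses inside the lemma's own proof to show $U'\sus U$), and together with the coefficient-sum-zero observation it correctly forces each part of $Q'$ into a single part of $Q$, after which the lemma gives convexity.
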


In a partition $Q$ of $V$, if $U_i$ and $U_j$ are parts
such that either $U_i$ or $U_j$ is not convex in $U_i \cup U_j$,
we say that $U_i$ and $U_j$ are {\em intertwined}.

\begin{corollary}
The maximal partitions for the partial order $\ord$ are the
partitions $Q$ such that any two parts $U_i$ and $U_j$ in the
partition are intertwined.
\end{corollary}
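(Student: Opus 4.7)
The plan is to derive this corollary directly from the preceding theorem characterizing $\ord$. I would argue each direction by contrapositive.

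Suppose first that $Q$ is not maximal, so there exists $Q^\prime$ with $Q \ord Q^\prime$ and $Q^\prime \neq Q$. Then some part $U^\prime$ of $Q^\prime$ must be the union of at least two parts of $Q$; pick two such parts $U_i$ and $U_j$. By the theorem both $U_i$ and $U_j$ are convex in $U^\prime$. My plan is to use the elementary observation that convexity is downward hereditary: if $U_i \sus U_i \cup U_j \sus U^\prime$ and $U_i$ is convex in $U^\prime$, then $U_i$ is convex in $U_i \cup U_j$, since any $v,w \in U_i$ together with some $u \in U_j$ on the path $vTw$ would already be a witness against convexity in $U^\prime$ (as $u \in U_j \sus U^\prime$ and $u \notin U_i$). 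Applying the same reasoning to $U_j$ shows that the pair $U_i, U_j$ is not intertwined.

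Conversely, suppose $Q$ has a pair $U_i, U_j$ of parts that is not intertwined, i.e.\ both $U_i$ and $U_j$ are convex in $U_i \cup U_j$. I would form $Q^\prime$ by replacing the two parts $U_i$ and $U_j$ with the single part $U_i \cup U_j$ and leaving all other parts unchanged. The theorem then yields $Q \ord Q^\prime$ immediately: for the merged part the convexity condition is exactly non-intertwinement, and the unchanged parts of $Q^\prime$ satisfy the condition trivially (each is itself, convex in itself). Since $Q^\prime \neq Q$, this shows that $Q$ is not maximal.

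The whole argument essentially unpacks the characterization of $\ord$ supplied by the theorem, so I do not foresee any real obstacle; the only step worth spelling out is the downward-heredity of convexity used in the first direction.
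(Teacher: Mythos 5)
Your proof is correct and is essentially the argument the paper intends: the corollary is stated without proof as an immediate consequence of the preceding theorem, and your two contrapositive directions, together with the (correctly verified) downward heredity of convexity, supply exactly the missing details. No gaps.
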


\begin{example}
  In the introduction, looking at Figure \ref{figthree}, the partition of
  vertices in Example \ref{eks:intro-hept3} is not maximal. We may join
  \[ \{1,5\} \cup \{2\} \cup \{3\} \cup \{4\} \ord \{1,5\} \cup \{2,3,4\}.\]
  The latter vertex partition is maximal since it is intertwined.
  Also note that the first partition
  is not $\ord \{1,2,3,4,5\}$, since $\{1,5\}$ and $\{2\}$ (as well as $\{4\}$)
  are intertwined.

  The partition $\{1,2,3,4,5\}$ corresponds to the quotient ring
  $k[x_E]/(x_e)^2_{e \in E}$, which is $k[x_E]$ divided by the square of the
  maximal graded ideal.
  Hence this ring is {\em not} a quotient ring of
  $\Spol/(I(T) + (x_{a,2} - x_{d,4}))$
  of Example \ref{eks:intro-hept2}, by a {\em regular linear space}.
  (But it is of course a quotient taking a suitable general linear space.)
  \end{example}

\begin{corollary}
Let\/ $\overline{V}$ be the partition of\/ $V$ into singletons. Then for any
partition $Q$ of\/ $V$ the interval\/ $[\overline{V},Q]$ with respect to the partial order
$\ord$ is a Boolean lattice.
\end{corollary}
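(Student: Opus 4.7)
The plan is to proceed in two steps: first, decompose the interval as a product over the parts of $Q$; second, show each single-part factor is isomorphic to the Boolean lattice of subsets of edges of a flowing tree.

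For the product decomposition, write $Q = \{U_1,\ldots,U_r\}$. By Theorem~\ref{thm:regtree-part} one has $L(Q) = \bigoplus_{i=1}^{r} L(U_i)$, and each summand $L(U_i)$ is supported on the variables indexed by edges of a flowing tree $S_i$ on $U_i$; these supports are pairwise disjoint. Hence for any $Q' \ord Q$ the space $L(Q')$ breaks up as $\bigoplus_i L_i$ with $L_i \sus L(U_i)$, and each $L_i$ corresponds to a partition of $U_i$ into blocks convex in $U_i$. This gives an order isomorphism
\[
[\overline{V}, Q] \;\cong\; \prod_{i=1}^{r}\,[\overline{U_i},\{U_i\}].
\]
Since a product of Boolean lattices is Boolean, we may reduce to the case $Q = \{U\}$ with a single part.

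For this single-part case, pick a tree $S$ on $U$ flowing with $T$ (Corollary~\ref{cor:regtree-exist}) and build the candidate map
\[
\Phi\colon 2^{E(S)} \longrightarrow [\overline{U}, \{U\}], \qquad E' \mapsto Q_{E'},
\]
where $Q_{E'}$ is the partition of $U$ whose blocks are the vertex sets of the connected components of $S \setminus E'$. Set inclusion manifestly corresponds to refinement, so the content lies in showing $\Phi$ is a well-defined bijection. Well-definedness, namely that each block of $Q_{E'}$ is convex in $U$, is immediate from the flow condition: if $v,w$ lie in the same component of $S\setminus E'$ and $u \in U$ is on $vTw$, then $v,u,w$ are $T$-aligned and hence $S$-aligned, so $u$ lies on $vSw$ and stays in that component.

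The main work is surjectivity. Given $Q^* \in [\overline{U},\{U\}]$, set $E^*$ to be the $S$-edges whose endpoints lie in distinct blocks of $Q^*$; one must verify $Q_{E^*} = Q^*$, which reduces to showing that whenever $v,w$ belong to a common block of $Q^*$, the $S$-path $vSw$ stays inside that block. The hard part will be this step, since intermediate vertices of $vSw$ need not lie on $vTw$, so convexity of the blocks in $U$ does not apply directly. I would tackle it by induction on the length $\ell_S(v,w)$, using Lemma~\ref{lem:reg-hvw} to produce $h_{v,u}, h_{u,w} \in L(Q^*)$ at an intermediate vertex $u$ of $vSw$, and Lemma~\ref{lem:reg-edge} together with the flow condition to show that $u$ must belong to the same block as $v$ and $w$. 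Once $\Phi$ is established as a bijection, it is automatically a lattice isomorphism, yielding the Boolean structure of rank $|E(S)| = |U|-1$ and completing the proof.
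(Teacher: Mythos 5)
Your first step, the product decomposition of $[\overline{V},Q]$ over the parts of $Q$ using the disjointness of the supports of the $\oPsi S_i$, is sound and agrees with the reduction the paper itself makes (its proof immediately reduces to showing that the partitions of a single subset $U$ into convex parts form a Boolean lattice, though it then proceeds by a different device: peeling off an extremal vertex and identifying the lattice with $B\times\{0,1\}$ by induction). The genuine problem is exactly the step you flag as ``the hard part'': surjectivity of $\Phi$ is false, and in fact the interval $[\overline{U},\{U\}]$ need not be Boolean, so no argument can close this gap. Take $T$ to be the star with centre $c$ and leaves $a,b,d$, all edges oriented outward, and let $Q=\{a,b,d\}\sqcup\{c\}$, so $U=\{a,b,d\}$. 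Every subset of $U$ is convex in $U$, because the $T$-path between two leaves meets $U$ only in its endpoints; hence all five partitions of the three-element set $U$ lie in the interval, while $2^{E(S)}$ has only four elements for any flowing tree $S$ on $U$. Concretely, with $S$ the path $a-b-d$, the partition $Q^*=\{a,d\}\sqcup\{b\}$ is not of the form $Q_{E'}$: deleting edges of $S$ can never join $a$ to $d$ while isolating $b$. Your proposed inductive repair via Lemma~\ref{lem:reg-hvw} cannot succeed, since $h_{a,b}=x_{e_a,0}-x_{e_b,0}$ simply does not lie in $L(Q^*)=\langle x_{e_a,0}-x_{e_d,0}\rangle$. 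On the linear-space side, the interval consists of $0$, the three pairwise incomparable lines $\langle h_{a,b}\rangle$, $\langle h_{a,d}\rangle$, $\langle h_{b,d}\rangle$, and the plane $L(U)$: a rank-two lattice with three atoms, which is not Boolean.

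This discrepancy is not an artifact of your route. The paper's dichotomy (``either $\{v\}$ is a singleton class, or $v$ and $w$ are in the same class'') presupposes that all paths from the extremal vertex $v$ into $U\setminus\{v\}$ first meet $U$ at the same vertex; in the example above with $v=a$ the paths branch at $c\notin U$, and the class of $a$ may be any of $\{a\}$, $\{a,b\}$, $\{a,d\}$, $\{a,b,d\}$. So the statement needs an extra hypothesis on $Q$. It does hold for $Q=\{V\}$, and there your construction is the cleanest proof: a part equal to all of $V$ has convex subsets exactly the vertex sets of subtrees of $T$, your $\Phi$ with $S=T$ is then a genuine bijection onto $2^{E(T)}$, and your well-definedness and injectivity arguments (which are correct in general) finish the job. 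I would record the star counterexample rather than attempt the surjectivity step.
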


\begin{proof}
Given a subset $U$ of $V$, we must show that the lattice of partitions
of $U$ into convex parts is a Boolean lattice. Let $v$ be extremal in
$U$ in the sense that every other vertex of $U$ is on the same side of $v$,
i.e., there is an edge $e = \{v,w\}$  from $v$ such that the path from $v$ to
any other vertex of
$U$ starts with the edge $e$.  
Let $U^\prime = U \setminus \{ v\}$. By induction the
lattice of partitions of $U^\prime$ into convex subsets is a Boolean lattice
$B$. The partitions $Q$ of $U$ into convex subsets are now of two types:
either $\{v\}$ is a singleton class, or $v$ and $w$ are in the same class.
This gives that the lattice of partitions of $U$ identifies as
$B \times \{0,1\}$ and so is Boolean.
\end{proof}

\subsection{Partitions of the edge set}

If $D^\prime \sus D \sus E$ are sets of edges of $T$, we may as
above define the notion of $D^\prime$ being convex in $D$.
As above we may show:

\begin{proposition}
Let\/ $P^\prime$ and\/ $P$ be partitions of\/ $E$. Then
$L(P^\prime) \sus L(P)$ iff each part\/ $E_i$ of\/ $P$ is a union of parts of\/
$P^\prime$ which are convex for\/ $E_i$. 
Write then\/ $P^\prime \ord P$.   
\end{proposition}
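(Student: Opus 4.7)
The plan is to mirror the proof of the vertex-partition theorem immediately above, either by repeating the template directly on the edge side or by pulling back through the bijection of Theorem \ref{thm:part-VE}. The whole proposition reduces to the single-part statement: given subsets $D^\prime \sus D \sus E$, one has $L(D^\prime) \sus L(D)$ iff $D^\prime \sus D$ and $D^\prime$ is convex in $D$ (with the degenerate case $|D^\prime| \leq 1$ giving $L(D^\prime) = 0 \sus L(D)$). Once this is in place, the general case follows by the same bookkeeping used in the vertex case: decomposing a distinguished basis of $L(P^\prime)$ across the direct-sum decomposition $L(P) = \bigoplus_i L(E_i)$ exhibits each $E_i$ of $P$ as a union of parts of $P^\prime$ convex in $E_i$, and the converse is immediate from stacking direct sums.

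For the single-part lemma I would parallel the arguments of Lemmas \ref{lem:reg-hvw} and \ref{lem:reg-nreg}. For the ``if'' direction, pick a regular generating set for $L(D^\prime)$ coming from a tree on $D^\prime$ flowing with the part of $T$ determined by $D$; the edge version of Lemma \ref{lem:reg-hvw} then places each of its defining differences inside $L(D)$. For ``only if'', suppose some edge of $D \setminus D^\prime$ lies on the $T$-path inside $D$ between two edges of $D^\prime$. The edge analog of Lemma \ref{lem:reg-nreg} then produces two generators of $L(D)$ that fail to form a regular sequence, contradicting regularity of $L(D)$. The possibility that $D^\prime$ is not contained in $D$ is ruled out by the same ``wrong variable $x_{e,0}$ versus $x_{e,1}$'' argument that eliminated $v \in U^\prime \setminus U$ in the vertex lemma.

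The main obstacle is transporting Lemmas \ref{lem:reg-hvw} and \ref{lem:reg-nreg} cleanly to the edge setting. The cleanest route is via Theorem \ref{thm:part-VE}: each partition $P$ of $E$ corresponds to a partition $Q$ of $V$ into independent sets with $L(P) = L(Q)$ by definition, and under this bijection one must verify that ``convex in $E_i$'' on the edge side matches ``convex in the corresponding $U_i$'' on the vertex side. This is the one genuine combinatorial check: a $T$-path inside $D$ between two edges of $D^\prime$ must correspond, under the passage to the associated independent vertex set, to a $T$-path inside the matching vertex part between the two corresponding vertices. Once this correspondence of convexity notions is verified, the proposition is a direct consequence of the vertex-partition theorem via the bijection of the appendix.
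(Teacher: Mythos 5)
Your plan matches the paper's treatment: the paper gives no argument beyond defining edge-convexity by analogy and asserting ``as above we may show,'' i.e.\ it intends exactly your first route of transcribing the vertex-partition lemma and theorem of the preceding subsection to the edge setting, and since $L(P)$ is by definition $L(Q)$ for the corresponding independent vertex partition, your second route through Theorem~\ref{thm:part-VE} is an equally faithful reading. The one step you flag but do not carry out --- matching edge-convexity of parts of $P^\prime$ inside parts of $P$ with vertex-convexity of parts of $Q^\prime$ inside parts of $Q$ (note the parts do not correspond bijectively, as $Q$ has one more part than $P$) --- is precisely what the paper also leaves unverified, so your proposal is at least as complete as the source.
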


\begin{corollary}
The maximal partitions for the partial order $\ord$ are the
partitions $P$ such that any two parts $E_i$ and $E_j$ in the
partition are intertwined.
\end{corollary}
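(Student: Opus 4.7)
The plan is to mirror the proof of the analogous Corollary for the vertex partition lattice in the preceding subsection, reducing the question to a pairwise analysis of convexity and combining it with a short monotonicity observation.

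First I would invoke the preceding Proposition to reformulate maximality: $P$ is maximal iff no legal merger is possible, where by a \emph{legal merger} I mean taking some $k \geq 2$ distinct parts $E_{i_1},\ldots,E_{i_k}$ of $P$ and replacing them by their union $E' = E_{i_1} \cup \cdots \cup E_{i_k}$, subject to the requirement that each $E_{i_j}$ be convex in $E'$.

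For the easy direction, if two distinct parts $E_i, E_j$ are not intertwined, then by definition both are convex in $E_i \cup E_j$, and merging them (while keeping every other part intact) is already a legal $2$-fold merger, so $P$ is not maximal. The harder direction assumes every pair of parts of $P$ is intertwined and needs to rule out a legal merger of arbitrary size. I would proceed by contradiction: given a legal merger producing $E' = E_{i_1} \cup \cdots \cup E_{i_k}$, pick any two of the merged parts, say $E_{i_1}$ and $E_{i_2}$. Intertwinement provides, after relabeling, edges $e,f \in E_{i_1}$ and $g \in E_{i_2}$ with $g$ lying on the edge-path from $e$ to $f$ in $T$. Since $E_{i_1} \cup E_{i_2} \sus E'$, the same triple witnesses a convexity failure of $E_{i_1}$ in $E'$, contradicting the legality of the merger.

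The one substantive point, and the step I would flag as the main (though short) obstacle, is the monotonicity of convexity failure under enlargement of the ambient set: a witness to non-convexity living inside $E_{i_1} \cup E_{i_2}$ persists inside any superset, because the edge-path from $e$ to $f$ in $T$ is determined by $T$ alone and does not shrink when the ambient set grows. This is precisely what makes pairwise intertwinement strong enough to block every $k$-fold merger, not merely the $2$-fold ones, and hence yields the corollary.
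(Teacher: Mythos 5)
Your proof is correct, and it takes the route the paper implicitly intends: the paper states this corollary without proof as an immediate consequence of the preceding proposition and the definition of intertwined parts, and your argument — reformulating maximality as the nonexistence of a legal merger, handling the pairwise case directly, and observing that a non-convexity witness inside $E_{i_1}\cup E_{i_2}$ persists inside any larger union because the path in $T$ is fixed — is exactly the missing justification. The one step you rightly flag as substantive (pairwise intertwinement blocking $k$-fold mergers via monotonicity of the witness) is indeed the only point requiring an argument, and you handle it correctly.
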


\begin{example}
  In the introduction, looking at Figure \ref{figthree}, the partition of
  edges in Example \ref{eks:intro-hept3} is not maximal. We may join
  \[ \{a,d\} \cup \{b\} \cup \{c\} \ord \{a,d\} \cup \{b,c\}.\]
  In the latter partition the parts are intertwined and
  so it is maximal. It correspond to the vertex partition
  $\{1,5\} \cup \{3,4\} \cup \{2\}$. This vertex partition is also
  maximal (but that does not necessarily follow from the edge partition
  being maximal).
  \end{example}

\begin{corollary}
Let\/ $\overline{E}$ be the partition of\/ $E$ into singletons. Then for any
partition $P$ of\/ $E$ the interval\/ $[\overline{E},P]$ with respect to the partial order
$\ord$ is a Boolean lattice.
\end{corollary}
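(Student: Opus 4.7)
The plan is to mirror the proof of the analogous corollary for vertex partitions. First I would reduce the claim via the natural product decomposition
$$[\overline{E}, P] \;\cong\; \prod_{E_i \in P} \Lambda(E_i),$$
where $\Lambda(D)$ denotes the poset (under $\ord$) of partitions of a subset $D \sus E$ into $D$-convex parts. This decomposition is immediate from the definition of $\ord$: a partition in the interval $[\overline{E}, P]$ is exactly the data of a convex partition of each part $E_i$ separately. Since a finite product of Boolean lattices is Boolean, it suffices to prove $\Lambda(D)$ is Boolean for every $D \sus E$.

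I would then argue by induction on $|D|$, with the base case $|D| \leq 1$ trivial. For the inductive step I would pick an \emph{extremal} edge $e \in D$: one of whose endpoints $v$ has the property that every other edge of $D$ is reached from $e$ via a path in $T$ leaving $e$ through the opposite endpoint $w$. Such an edge always exists; for instance, take any edge of $D$ whose endpoint $v$ is a leaf of the minimal subtree of $T$ containing $D$. Set $D' = D\setminus\{e\}$; by induction $\Lambda(D')$ is Boolean, and the goal is to establish a poset isomorphism
$$\Lambda(D) \;\cong\; \Lambda(D') \times \{0,1\},$$
in which the binary factor records whether $\{e\}$ is a singleton part of the partition or $e$ is merged with a part already containing some edge of $D'$. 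The isomorphism would be constructed exactly as in the vertex proof: from a partition $P'$ of $D'$ one extends to a partition of $D$ either by adjoining $\{e\}$ as a new singleton part, or by enlarging a canonical ``near'' part of $P'$ to include $e$.

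The main obstacle will be the branching behaviour at $w$ when several edges of $D$ are incident to $w$. In this case the canonical near part of $P'$ must be identified carefully: the correct choice is the part of $P'$ containing the edges of $D'$ that are incident to $w$ and lie on the initial segments of paths emanating from $e$. Extremality of $e$ (every path from $e$ into $D'$ leaves via $w$) is precisely what makes this choice both well-defined and convexity-preserving in $D$: a non-singleton part of a partition in $\Lambda(D)$ containing $e$ must, by $D$-convexity, contain every edge of $D$ on a path connecting $e$ to another member of that part, and this forces the merged part to arise uniformly from a single $P'$-part adjacent to $w$. Once this bookkeeping is settled, the induction closes via the identity $B_n \cong B_{n-1}\times\{0,1\}$.
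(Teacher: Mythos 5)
Your reduction to the product $\prod_i \Lambda(E_i)$ is fine, and your instinct that the branching at $w$ is ``the main obstacle'' is exactly right --- but that obstacle is fatal rather than a matter of bookkeeping. The claimed isomorphism $\Lambda(D)\cong\Lambda(D')\times\{0,1\}$ requires that each convex partition $P'$ of $D'$ admit exactly two extensions to $D$: leave $e$ as a singleton, or merge it into one canonical part. When several branches of $T$ leave $w$ and the first $D'$-edges met in different branches lie in different parts of $P'$, there is no canonical near part: $e$ may be merged with the part met in one branch or with the part met in another, and each choice yields a legitimately $D$-convex partition. Concretely, let $T$ be the star with centre $c$ and leaves $a,b,d$, let $D=E=\{e_a,e_b,e_d\}$, and take $e=e_a$ (so $w=c$ and $D'=\{e_b,e_d\}$). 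Starting from the all-singleton partition of $D'$ you can form $\{e_a,e_b\}\sqcup\{e_d\}$ or $\{e_a,e_d\}\sqcup\{e_b\}$, both convex in $D$, in addition to the all-singleton partition of $D$; so $\Lambda(D)$ has five elements while $\Lambda(D')\times\{0,1\}$ has four.

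In fact this example shows the statement itself fails, so no repair of the argument is possible. For $P$ the one-part partition of $E=\{e_a,e_b,e_d\}$ of the star, $L(P)=\langle h_{a,b},h_{b,d}\rangle$ is two-dimensional (since $h_{a,d}=h_{a,b}+h_{b,d}$ by Lemma~\ref{lem:reg-hhh}), and the interval $[\overline{E},P]$ consists of $\overline{E}$, the three pair-partitions with linear spaces $\langle h_{a,b}\rangle$, $\langle h_{a,d}\rangle$, $\langle h_{b,d}\rangle$, and $P$ itself: five elements with three atoms, i.e.\ the diamond $M_3$, which is not Boolean. The paper's own proof of the vertex analogue has the same defect (its dichotomy ``either $\{v\}$ is a singleton class, or $v$ and $w$ are in the same class'' presupposes a unique nearest element of $U$ beyond the extremal vertex, which fails as soon as $T$ branches there), and the edge version is only asserted ``as above''; so your proposal faithfully reproduces the intended argument, and both break at the same point. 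The conclusion does hold when no such branching can occur --- for instance when $T$ is a path, where every part consists of consecutive edges and the extension really is binary --- which is presumably the picture behind the claim.
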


\section{Hypertree of quotients and triangulated balls}
\label{sec:ball}

We describe the squarefree quotients of $\Spol/I(T)$ by regular linear spaces
whose associated simplicial complex is a triangulated ball.
In particular we describe when we get triangulations of polygons.

Let
\[ P : E_1 \sqcup E_2 \sqcup \cdots \sqcup E_r \]
be a partition of the edge set $E$ of the tree $T$. We may think of
the edges of $E_i$ as a color class.
The partition corresponds
    by Theorem~\ref{thm:part-VE} to
    a partition $V = U_0 \sqcup U_1 \sqcup \cdots \sqcup U_r$
    of the vertex set into independent sets of vertices.
    Let $S = S_0 \cup  S_1 \cup \cdots \cup S_r$ where
    the $S_i$ are trees on $U_i$ flowing with $T$.
    The image $\oPsi S $ is a forest and each $\oPsi S_j$ is a collection
    of connected components (trees) of $\oPsi S$.
    Moreover $L(P) = \oplus_{i=1}^r L(S_i)$.
    In the sequel we also write $\oPsi P$ for $\oPsi S$.

    \medskip

Let us describe the variables in the quotient ring $\Spol/L(P)$ (this is
a polynomial ring). These variables identifiy as subsets of
of the incidence relation $\iC$.
Those subsets which contain more than one element arise as follows.
For each class $E_i$
consider maximal sets of edges $E_{ij} \sus E_i$ such that for every pair
of edges $f,g$ in $E_{ij}$, the only edges in $E_i$ on the unique path from
$f$ to $g$ are $f$ and $g$ themselves. For given $i$ two such maximal $E_{ij}$
and $E_{ij^\prime}$ have at most one edge in common. (In fact the $E_{ij}$'s
form the set of edges in a hypertree on $E_i$.)
If $f,g$ are edges in a $E_{ij}$ with path
  \[\begin{tikzpicture}[dot/.style={draw,fill,circle,inner sep=1pt},scale=.9]
\fill (0,0)  circle (.08);
\fill (1,0)  circle (.08);
\fill (2.5,0)  circle (.08);
\fill (3.5,0)  circle (.08);
\draw[very thick] (0,0)--(1.3,0);
\draw[very thick,dotted] (1.3,0)--(2.2,0);
\draw[very thick] (2.2,0)--(3.5,0);
\coordinate [label=above: $v$] (v) at (0,0);
\coordinate [label=below: $f$] (e) at (.5,0);
\coordinate [label=below: $g$] (f) at (3,0);
\coordinate [label=above: $w$] (w) at (3.5,0);
\coordinate [label=above: $v^\prime$] (vp) at (1,0);
\coordinate [label=above: $w^\prime$] (wp) at (2.5,0);
\end{tikzpicture}\]
then $x_{f,v^\prime} - x_{g,w^\prime}$ is
a variable difference in $L(P)$. It gives a class
$[x_{f,v^\prime}]$, a variable in $\Spol/L(P)$. 
This gives one variable in $\Spol/L(P)$ for each set $E_{ij}$.

\begin{example}\label{ex:threecolors}
 In Figure~\ref{figsix}
 we have a partition of the edges into three
  color classes.
  The four red edges give eight red variables in $\Spol$.
  The red edges give two maximal sets $\{a,c\}$ and $\{c,e,g\}$, each of which
  combines into one variable, giving five red variables in the
  quotient ring $\Spol/L(P)$. 
  \end{example}
  
\begin{figure}
\begin{center}
\begin{tikzpicture}[dot/.style={draw,fill,circle,inner sep=1pt},scale=1.4]
\draw[very thick,red] (3,.9)--(3.7,0)--(4.6,0)--(4.6,-.9);
\draw[very thick,blue] (3.7,0)--(3,-.9);
\draw[very thick,green] (5.5,0)--(4.6,0)--(4.6,.9);
\draw[very thick,blue] (5.5,0)--(6.2,-.9);
\draw[very thick,red] (5.5,0)--(6.2,.9);
\fill (3,.9)  circle (0.07);
\fill (3,-.9)  circle (0.07);
\fill (3.7,0)  circle (0.07);
\fill (4.6,0)  circle (0.07);
\fill (4.6,.9)  circle (0.07);
\fill (4.6,-.9)  circle (0.07);
\fill (5.5,0)  circle (0.07);
\fill (6.2,.9)  circle (0.07);
\fill (6.2,-.9)  circle (0.07);
\node at (3.15,.45) {$a$};
\node at (3.15,-.47) {$b$};
\node at (4.15,.17) {$c$};
\node at (4.75,.47) {$d$};
\node at (4.45,-.5) {$e$};
\node at (5.05,-.2) {$f$};
\node at (6,.41) {$g$};
\node at (6.05,-.47) {$h$};
\node at (5.7,0) {$v$};
\end{tikzpicture}
\caption{}
\label{figsix}
\end{center}
\end{figure}

  We now describe the facets of the simplicial complex corresponding
  to the quotient $\Spol/(I(T) + L(P))$. 
For each $v \in V$ and color class $E_i$, let $E_i^v$ be the set of edges $f$ in $E_i$ such
that on the path from $f$ to $v$ the only edge in $E_i$ is $f$ itself.
Then $E_i^v$ is a maximal set $E_{ij}$ as above and hence gives
a variable $x_{E_i^v}$ in $\Spol/L(P)$. We have $x_{E_i^v} = x_{E_j^w}$ iff
$i = j$ and there is no edge from $E_i$ on the path from $v$ to $w$.
Let
\[ F_v = \{ E_i^v \mid i = 1, \ldots, r \}. \]

\begin{example}
  Consider Figure~\ref{figsix}. The facet $F_v$ of $\Spol/(I(T) + L(P))$ is
  of cardinality~$3$. Its elements are the three maximal sets
  \[E^v_\text{red} = \{c,e,g\}, \quad E^v_\text{blue} = \{h,b\}, \quad
  E^v_\text{green} = \{f \}. \]
\end{example}

\begin{lemma}
    The facets of the simplicial complex associated to $\Spol/(I(T)+L(P))$
  are the $F_v$'s, for $v \in V$.
  In particular the cardinality of each facet is the number of classes
  in the partition~$P$.
\end{lemma}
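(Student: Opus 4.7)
The plan is to identify the facets of the quotient complex as images of facets of the $I(T)$-complex under the variable identifications induced by $L(P)$, then check that these images have the claimed form, the same cardinality, and are pairwise incomparable.

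First, I will invoke Lemma~\ref{lem:simp-nz}(c) iteratively. By Theorem~\ref{thm:regsq-partedge}, the partition $P$ corresponds to a regular linear space $L(P)$ of variable differences that cuts squarefree. Applying the lemma one difference at a time, the facets of the simplicial complex for $\Spol/(I(T)+L(P))$ are exactly the images of the facets of the $I(T)$-complex under the surjection $E(T)\times\{0,1\} \twoheadrightarrow (E(T)\times\{0,1\})/{\sim}$ induced by the identifications. By Lemma~\ref{lem:tree:facets}, the original facets are $F_v = \{(e,e_\TO(v)) : e \in E\}$ for $v \in V$.

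Next I will pin down the equivalence classes using the description of variables preceding the lemma. The claim is that within each color $E_i$, two labels $(f,a)$, $(g,b)$ get identified precisely when they correspond to the \emph{same} component of $T \setminus E_i$ (the side of $f$ given by $a$ being adjacent to that component, and likewise for $g$). This follows from the definition of $\oPsi$ and $h_{v,w}$ applied to a flow-tree $S_i$ on $U_i$: each $S_i$-edge $\{v,w\}$ identifies precisely the $w$-facing side of the $v$-incident edge on $vTw$ with the $v$-facing side of the $w$-incident edge on $vTw$, and iterating gives identifications exactly across one component of $T \setminus E_i$. Thus the equivalence classes of size $\geq 2$ are indexed by the maximal sets $E_{ij}$, and each $E_{ij}$ is the set of $E_i$-edges incident to a common component $C$ of $T \setminus E_i$.

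Then I will compute the image of $F_v$. For $e \in E_i$, the label $(e,e_\TO(v))$ represents the side of $e$ containing $v$; this side contains the component $C_v^i$ of $T \setminus E_i$ that contains $v$, and the maximal set of $E_i$-edges incident to $C_v^i$ is precisely $E_i^v$. So every edge $e \in E_i$ contributes the single class $E_i^v$ to the image of $F_v$, and the image equals $\{E_i^v : i = 1, \ldots, r\}$, of cardinality $r$.

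Finally, for $v \neq w$ the path $vTw$ contains some edge of some color $i$, separating $v$ and $w$ into distinct components of $T \setminus E_i$ and giving $E_i^v \neq E_i^w$. Since all images have the same cardinality $r$, none strictly contains another, so each image is a facet. The main obstacle is the second step: rigorously matching the $L(P)$-equivalence on $E(T)\times\{0,1\}$ with the geometric description in terms of components of $T \setminus E_i$, ensuring that no spurious identifications arise across components and that within a component all appropriate labels are chained together — this requires using the flow condition on each $S_i$ together with Lemma~\ref{lem:reg-hvw} to rewrite arbitrary $h_e$ generators in a form suited to the component structure.
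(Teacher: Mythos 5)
The paper's own proof is a one-line appeal to ``repeated use of Lemma~\ref{lem:simp-nz}'', so you are attempting to supply exactly the details the paper omits --- but your first step misreads that lemma, and the error propagates. Lemma~\ref{lem:simp-nz}(c) does \emph{not} say that the new facets are the images of the old facets under the identification map: it says that if a facet contains exactly one of the two identified vertices, that vertex is \emph{deleted} from the facet (only when the facet contains both does it acquire the merged vertex). Iterating over a basis of $L(P)$, the quotient facet corresponding to $F_v$ is therefore $\{\,C : C \subseteq F_v\,\}$, where $C$ runs over the equivalence classes of $E(T)\times\{0,1\}$, and \emph{not} $\{\,C : C\cap F_v\neq\emptyset\,\}$ as you assert. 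Your third step contains a compensating error: for $e\in E_i\setminus E_i^v$ the label $(e,e_\TO(v))$ does \emph{not} lie in the class $E_i^v$. That class consists of the labels facing the component $C_v^i$ of $T\setminus E_i$ containing $v$; the label $(e,e_\TO(v))$ instead faces the component of $T\setminus E_i$ \emph{adjacent to $e$} on its $v$-side, which is a different component when another $E_i$-edge separates $e$ from $v$. ``The $v$-side of $e$ contains $C_v^i$'' does not imply the label is identified into the $C_v^i$-class. The two mistakes cancel to give the right answer, but neither intermediate claim is true.

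A concrete check: let $T$ be the path $v_0 - e_1 - v_1 - e_2 - v_2 - e_3 - v_3$ with the single colour class $E_1=\{e_1,e_2,e_3\}$, so $r=1$. The classes are $A=\{(e_1,\to v_1),(e_2,\to v_1)\}$, $B=\{(e_2,\to v_2),(e_3,\to v_2)\}$ and the two singletons $(e_1,\to v_0)$, $(e_3,\to v_3)$. Here $F_{v_0}=\{(e_1,\to v_0),(e_2,\to v_1),(e_3,\to v_2)\}$, whose image under the quotient map is $\{(e_1,\to v_0),A,B\}$ of cardinality $3$, which is not even a face of the ($0$-dimensional) quotient complex; the actual facet is the single class $\{(e_1,\to v_0)\}$, i.e.\ $E_1^{v_0}=\{e_1\}$. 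The repair is to work with $\{C : C\subseteq F_v\}$: show that a class $C$ built from colour $i$ is contained in $F_v$ precisely when its edges are the $E_i$-edges incident to $C_v^i$ and its labels face $C_v^i$ (equivalently $v$), which forces $C$ to be the class of $E_i^v$; your final incomparability argument then goes through unchanged.
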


\begin{proof}
  This follows by repeated use of Lemma \ref{lem:simp-nz}.
\end{proof}

\begin{lemma} \label{lem:ball-fe}
  Let $e = \{v,w\}$ be an edge in $T$, in the class $E_k$.
Then $E_i^v = E_i^w$ for $i \neq k$. 
  The facets $F_v$ and $F_w$ have a codimension-one face in common.
  It is the set
  \[ F_e = \{ E_i^v (= E_i^w) \mid i \neq k \}. \]
  \end{lemma}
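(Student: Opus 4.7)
The plan is to verify the identity $E_i^v = E_i^w$ for $i \neq k$, from which the codimension-one face claim follows essentially by bookkeeping on cardinalities.

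First I would fix $i \neq k$ and take an arbitrary edge $f \in E_i$. The key geometric observation is that in the tree $T$, removing the edge $e = \{v,w\}$ splits $T$ into exactly two components, one containing $v$ and one containing $w$. The edge $f$ lies in one of these components, so after relabeling (by the symmetry between $v$ and $w$) I may assume $f$ is in the $v$-component. Then the path from $f$ to $v$ avoids $e$, while the path from $f$ to $w$ is exactly path$(f,v)$ extended by $e$. Since $e \in E_k$ and $i \neq k$, the edge $e$ does not belong to $E_i$, so the set of $E_i$-edges on path$(f,v)$ coincides with the set of $E_i$-edges on path$(f,w)$. By the definition preceding the lemma, this gives $f \in E_i^v \iff f \in E_i^w$, so $E_i^v = E_i^w$.

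From this the rest is immediate: the facet $F_v = \{E_i^v : i = 1, \dots, r\}$ has cardinality $r$ (the elements for different $i$ are distinct since the corresponding variables $x_{E_i^v}$ belong to different color classes, so represent different variables of $\Spol/L(P)$). The same applies to $F_w$. The step above shows $F_e := \{E_i^v : i \neq k\} = \{E_i^w : i \neq k\}$ is contained in $F_v \cap F_w$, and $|F_e| = r-1$, which is codimension one in both $F_v$ and $F_w$.

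There is no real obstacle here: the argument is purely combinatorial and relies only on the tree structure of $T$ together with the description of $F_v$ already in force. The only mild subtlety is ensuring that $F_v$ genuinely has cardinality $r$ — i.e.\ that for a fixed vertex $v$ the variables $x_{E_i^v}$ for distinct $i$ are genuinely distinct — but this is immediate because each variable class $[x_{f,*}]$ lives in exactly one color class $E_i$, so classes associated to different colors cannot coincide.
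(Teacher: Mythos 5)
Your proof is correct: the observation that for $f$ in the $v$-side component of $T-e$ the path from $f$ to $w$ is the path from $f$ to $v$ extended by the single edge $e\notin E_i$ immediately gives $E_i^v=E_i^w$ for $i\neq k$, and the cardinality count then yields the common codimension-one face. The paper's own proof of this lemma is just ``This is clear,'' so your argument is simply the natural verification of the same fact, not a different route.
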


\begin{proof}
  This is clear.
\end{proof}

\begin{lemma} \label{lem:ball-cod} The facets $F_v$ and
      $F_w$ have a codimension-one face $G$ in common if and only if
      the path from $v$ to $w$ has all edges of the same color. Then for all
      edges $e$ on this path, the $F_e$ are equal, and this is $G$.
      In particular $G$ is common to all facets $F_u$ for $u$ on
      this path.
\end{lemma}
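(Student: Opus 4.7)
The plan is to reduce everything to the precise characterization of when two variables $x_{E_i^v}$ and $x_{E_j^w}$ of $\Spol/L(P)$ coincide, stated just before the definition of $F_v$: namely $E_i^v=E_j^w$ iff $i=j$ and the unique path from $v$ to $w$ in $T$ contains no edge of $E_i$. Since each facet $F_v=\{E_1^v,\dots,E_r^v\}$ has exactly $r$ elements, $F_v$ and $F_w$ share a codimension-one face iff $E_i^v=E_i^w$ for exactly $r-1$ indices $i$.

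First I would handle the forward implication. Suppose $F_v$ and $F_w$ share a codimension-one face $G$. Then there is a unique index $k$ for which $E_k^v\neq E_k^w$, so by the coincidence criterion the path $vTw$ contains at least one edge of color $k$ and no edge of any color $i\neq k$. Hence all edges of $vTw$ lie in $E_k$. For the converse, assume the path $vTw$ is monochromatic of color $k$. Then for each $i\neq k$ the path contains no $E_i$-edge, so $E_i^v=E_i^w$; whereas the path does contain an $E_k$-edge, so $E_k^v\neq E_k^w$ (otherwise by Lemma \ref{lem:ball-fe} applied to any edge on the path, or directly from the coincidence criterion, we would get $F_v=F_w$, contradicting that $v\neq w$ forces the path to be non-empty). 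Thus $|F_v\cap F_w|=r-1$, giving a codimension-one face in common.

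For the last two sentences, fix any edge $e=\{v',w'\}$ on the monochromatic path $vTw$; by Lemma \ref{lem:ball-fe} it has color $k$ and $F_e=\{E_i^{v'}\mid i\neq k\}$. The key observation is that for any two vertices $a,b$ lying on $vTw$, the subpath $aTb$ is contained in $vTw$ and so contains no $E_i$-edge for any $i\neq k$; by the coincidence criterion this yields $E_i^a=E_i^b$ for all $i\neq k$. Applied to the pairs $(v',v)$ and $(v',w)$ this shows $F_e=G$, independently of $e$. Applied to any other vertex $u$ on the path, the same reasoning gives $E_i^u=E_i^v$ for $i\neq k$, so $G\subseteq F_u$, i.e.\ $G$ is a face of every $F_u$.

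No step here is difficult; the only thing to be careful about is the slightly subtle point that the single color $k$ on the path really does witness $E_k^v\neq E_k^w$ (so that the codimension is genuinely one and not zero), which follows from the coincidence criterion together with $v\neq w$.
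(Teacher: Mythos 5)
Your proof is correct and complete. It is not wrong to call it the same proof in spirit, but it is organized differently enough to be worth comparing. You reduce every claim to the coincidence criterion stated in the paper's setup (namely $x_{E_i^v}=x_{E_j^w}$ iff $i=j$ and the path $vTw$ contains no $E_i$-edge) together with the count $|F_v|=r$, so that "common codimension-one face'' becomes "exactly one index $k$ with $E_k^v\neq E_k^w$,'' which by the criterion is exactly "the path is monochromatic.'' The paper's own proof does not invoke that criterion as a black box: for the "if'' direction it orients the path, writes $F_{u_{i-1}}=F_i\cup\{(e_i,0)\}$ and $F_{u_i}=F_i\cup\{(e_i,1)\}$, and tracks the identifications $(e_i,1)=(e_{i+1},0)$ imposed by $L(P)$ to conclude all the $F_i$ coincide; for the "only if'' direction it exhibits two explicit elements $(e,0)$ and $(f,0)$ of $F_v\setminus F_w$ coming from the first two colors met along the path. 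So the paper in effect re-derives the instances of the coincidence criterion it needs, while you take that criterion (which the paper asserts without proof just before defining $F_v$) as given. Your route buys brevity and makes the "exactly $r-1$ shared vertices'' logic transparent, including the last two sentences of the lemma, which follow cleanly from your observation about subpaths; the paper's route is more self-contained at the level of the raw variables $(e,0),(e,1)$. The small implicit points you rely on --- that the $E_i^v$ are nonempty and pairwise distinct so $|F_v|=r$, and that $v\neq w$ forces some $E_k^v\neq E_k^w$ so the shared face is genuinely of codimension one --- are both correct and you flag the latter explicitly, so there is no gap.
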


\begin{proof}
  Suppose the edges on the path are all of the same color red.
Let the path be $v = u_0, u_1, \ldots, u_m = w$ with $e_i$ the edge
$\{u_{i-1},u_i\}$. Then for each edge $e_i$
\[ F_{u_{i-1}} = F_i \cup \{ {(e_i,u_{i-1})}\},
  \quad F_{u_i} = F_i \cup \{ {(e_i,u_i)}\} \]
for suitable $F_i$. Since $e_i$ and $e_{i+1}$ are successive red edges
we divide out by the variable difference $x_{e_i,u_i} - x_{e_{i+1},u_i}$
and so $(e_i,u_i)$ identifies with $(e_{i+1},u_i)$. We also have
\[ F_{u_{i}} = F_{i+1}  \cup \{ {(e_{i+1},u_i)}\},
  \quad F_{u_{i+1}} = F_{i+1} \cup \{ {(e_{i+1},u_{i+1})}\}. \]
We must then have
$F_i = F_{i+1}$. Hence all these $F_i$ are equal.

Suppose the edges on the path are not of the same color. Suppose going
from $v$ to $w$ there is first a sequence of red edges, the first one
being $e = \{ v = u_{0},u_1 \}$  and then eventually a blue edge
$f = \{u_i, u_{i+1}\}$. 
%
\begin{itemize}
\item The facet $F_v$ contains $(e,u_0)$ of color red. The facet $F_w$
also contains a (class) of a red edge. If this red edge was $e$ it would
have to be $(e,u_1)$. Hence $(e,u_0)$ is in $F_v$ but not in $F_w$.
\item Similarly the blue $(f,u_i)$ is in $F_v$, and by a similar argument
as above, $(f,u_i)$ is not in $F_w$. 
\ignore{
\begin{itemize}
\item The facet $F_v$ contains $(e,1)$ colored red.
  If $(e,1)$ is connected to $(g,*)$ by an edge in $\oPsi P$,
  then $g$ is a red edge
  on a different side of $v = u_{0}$ than $e$.
  It must be $(g,1)$ and with no other
  red edge on the path form $g$ to $v$. Furthermore if $(g,1)$ is
  connected by an edge to another vertex,
  it is either $(e,1)$ or a vertex $(g^\prime,1)$
  of the same type.

  In the facet $F_w$, $(e,1)$ does not occur,
  nor any of the $(g,1)$ above. So this class is not present in $\ovF_w$.
\item Consider the edge $(f,1)$. If it is connected to
  $(g,*)$ by an edge in $\oPsi \tau P$ it must again be $(g,1)$
  (the only way we could
  have $0$ as second coordinate would be some edge $(e_i,0)$ before
  we come to $f$, but $e_i$ is a red edge.)
  Also $g$ is on the opposite
  side of where $f$ is pointing. Similarly any edge connecting $(g,1)$
  must have the other vertex $(f,1)$ or of the same type as $(g,1)$.
  Thus the class of $(f,1)$ divides
  $\ovF_v$. However since $(f,1)$ nor the $(g,1)$'s do not occur in $F_w$,
  it does not divide $\ovF_w$.
}
\item The upshot is that $F_v \setminus F_w$ contains at least
  two elements, and so $F_v$ and $F_w$ do not intersect in
  codimension one.
\end{itemize}
\end{proof}

Recall that a set of edges in the tree $T$ is {\em independent}
if no two edges in the set are adjacent.
The quotient
of $\Spol/I(T)$ by $L(P)$ is a stacked simplicial complex.
It is again a quotient of the polynomial ring $\Spol/L(P)$.
Each part $E_i$ of $E$ is a subforest of $T$. Let $T_{ij}$ be the trees
of this subforest and $V_{ij} \sus V$ the support of $T_{ij}$.
Let $T^\prime$ be the hypertree whose edges are the sets $V_{ij}$.
In particular note that if $P$ is a partition whose parts $E_i$ consist of 
independent edges, then each $T_{ij}$ is simply an edge, and so $T^\prime = T$.

\begin{proposition}
  Let $P$ be a partition of the edge set of $T$.
The quotient
of $\Spol/I(T)$ by $L(P)$ corresponds to
a stacked simplicial complex $X$ whose associated
hypertree is $T^\prime$.
\end{proposition}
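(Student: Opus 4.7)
The plan is to exhibit an explicit stacking order of the facets $F_v$ and then read off the hypertree of the resulting stacked complex using Lemmas~\ref{lem:ball-fe} and~\ref{lem:ball-cod}. Fix any ordering $v_0, v_1, \ldots, v_{n-1}$ of $V$ such that for every $p \geq 1$ the vertex $v_p$ has exactly one neighbor $v'_p$ in the predecessor set $\{v_0,\ldots,v_{p-1}\}$ with respect to the tree $T$; such an ordering exists for any breadth-first traversal of $T$ from a chosen root, and uniqueness of $v'_p$ is forced by the fact that $T$ is acyclic while the predecessor set is a connected subtree. Write $e_p = \{v_p, v'_p\}$ and let $k_p$ be the color class in $P$ containing $e_p$.

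To check the stacking condition I would show that the subcomplex $X_{p-1}$ generated by $F_{v_0},\ldots,F_{v_{p-1}}$ satisfies $F_{v_p} \cap X_{p-1} = F_{e_p}$. The inclusion $\supseteq$ is immediate from Lemma~\ref{lem:ball-fe}, which identifies $F_{e_p}$ as a codimension-one face common to $F_{v_p}$ and $F_{v'_p}$. For the reverse inclusion it suffices to prove $F_{v_p} \cap F_{v_q} \subseteq F_{e_p}$ for every $q < p$: since $v'_p$ is the unique $T$-neighbor of $v_p$ inside $\{v_0,\ldots,v_{p-1}\}$, the edge $e_p$ necessarily lies on the path $v_p T v_q$, so this path carries at least one edge of color $k_p$. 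Consequently $E_{k_p}^{v_p} \neq E_{k_p}^{v_q}$, which forces $E_{k_p}^{v_p} \notin F_{v_p} \cap F_{v_q}$ and hence $F_{v_p} \cap F_{v_q} \subseteq \{E_i^{v_p} : i \neq k_p\} = F_{e_p}$.

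To identify the hypertree of the now-stacked complex $X$, I would invoke Lemma~\ref{lem:ball-cod}: two facets $F_v$, $F_w$ share a codimension-one face precisely when the path $v T w$ is monochromatic, and in that case all the faces $F_e$ for edges $e$ along that path coincide with the common face. Hence the codimension-one faces lying on two or more facets of $X$ are in natural bijection with the maximal monochromatic subtrees of $T$, that is, with the trees $T_{ij}$; and for each such face the set of facet-indices $v$ containing it is precisely the vertex support $V_{ij}$. By definition of the hypertree of a stacked simplicial complex, the edges of the hypertree of $X$ are then exactly the $V_{ij}$'s, which by definition of $T'$ is what we needed.

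The only substantive obstacle is the inclusion $F_{v_p} \cap F_{v_q} \subseteq F_{e_p}$ uniformly for all $q < p$; the key behind it is the purely tree-theoretic fact that any vertex joining a connected subtree has a unique neighbor inside it, after which every path from $v_p$ back to the added part of $T$ must cross $e_p$ and the color-class bookkeeping closes the argument.
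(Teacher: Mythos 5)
Your proof is correct, and it is more self-contained than the paper's on one point while agreeing with it on the other. For the identification of the hypertree you and the paper both lean on Lemma~\ref{lem:ball-cod}: the paper runs an explicit case analysis on triples $v,u,w\in V_{ij}$ ($T$-aligned versus non-aligned, with a direct computation of the common face in the non-aligned case), whereas you observe that Lemma~\ref{lem:ball-cod} applied to the two endpoints of a monochromatic path already forces $F_e=F_f$ for any two edges $e,f$ in the same component $T_{ij}$, and then propagate this over the connected subtree; both arguments are sound and essentially equivalent in content, though you should note (as the paper also leaves implicit) that the converse inclusion $\{w: F_w\supseteq G_{ij}\}\subseteq V_{ij}$ needs the observation that a shared codimension-one face of $F_v$ and $F_w$ \emph{equals} $F_{e'}$ for the (necessarily monochromatic) path edges, which pins down the colour and hence the component. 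Where you genuinely depart from the paper is the stackedness itself: the paper asserts in the discussion preceding the proposition that the quotient is a stacked simplicial complex and devotes its proof only to the hypertree, while you exhibit an explicit stacking order via a breadth-first ordering of $V$ and verify $F_{v_p}\cap X_{p-1}=F_{e_p}$ using Lemma~\ref{lem:ball-fe} together with the fact that $E_{k_p}^{v_p}$ does not occur in any earlier facet (since every path from $v_p$ into the predecessor subtree crosses $e_p$, which carries colour $k_p$). This computation is correct and simultaneously shows that $E_{k_p}^{v_p}$ is the unique free vertex of $F_{v_p}$ in $X_p$, so it fills in a step the paper takes for granted; the trade-off is only that your argument is tied to a choice of root, which is harmless since stackedness is an existence statement about orderings.
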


\begin{proof}

Consider then the tree $T_{ij}$. Let $v,u,w$ be three vertices in $V_{ij}$.
If they are $T$-aligned for some ordering, the facets $F_v, F_u, F_w$ of
$X$ have a
codimension-one face in common by Lemma \ref{lem:ball-cod}.
Suppose $\{u,v,w\}$ are non-aligned. Consider
the path from $v$ to $w$ and let $e$ be its last edge. Then $e$ is also
the last edge on the path from $u$ to $w$. Write
$F_w = F \cup \{(e,w)\}$. By the argument of Lemma \ref{lem:ball-cod},
all $x$ on these
paths have $F_x$ containing $F$. We readily get that $F$ is a 
codimension-one face of every $F_x$ for $x \in V_{ij}$.
Thus each $V_{ij}$ form an edge in the hypertree $T^\prime$
associated to the simplicial complex $X$. 
\end{proof}


\begin{theorem} \label{thm:ball-partedgeind}
There is a one-to-one correspondene between:
\begin{itemize}
\item regular linear spaces giving squarefree quotients of
$\Spol/I(T)$ corresponding to triangulated balls, and
\item 
  partitions $P$ of the edge set\/ $E$ of\/ $T$ into sets of independent edges.
  \end{itemize}
  The codimension-one faces of this triangulation which are on two facets are
  precisely the faces $F_e$ of Lemma~\ref{lem:ball-fe}.
  Let $B(T,P)$ be the ideal generated by the $x_{F_e}$ for $e \in E(T)$. Then
  $I(T) + B(T,P)$ is the 
  Stanley--Reisner ideal in $\Spol/L(P)$ definining the {\em boundary} of this
  triangulated ball, a triangulated
  sphere.
\end{theorem}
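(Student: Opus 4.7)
The plan is to combine Theorem~\ref{thm:regsq-partedge} with Proposition~7.6 (the statement just proved about the hypertree $T^\prime$ of the quotient) to reduce everything to the characterization of when a stacked simplicial complex is a triangulated ball. Recall from the discussion after Definition~3.4 that a stacked simplicial complex is a triangulated ball precisely when its associated hypertree is an ordinary tree, i.e.\ every hyperedge has cardinality exactly two.

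First, I would invoke Theorem~\ref{thm:regsq-partedge} to identify regular linear spaces $L(P)$ giving squarefree quotients with partitions $P = E_1 \sqcup \cdots \sqcup E_r$ of $E(T)$. By Proposition~7.6, the hypertree $T^\prime$ of the resulting stacked simplicial complex has hyperedges $V_{ij}$, where $V_{ij}$ is the vertex support of the connected component $T_{ij}$ of the subforest on $E_i$. The triangulated-ball condition then becomes $|V_{ij}| = 2$ for all $i,j$, i.e.\ each $T_{ij}$ is a single edge, which is exactly the condition that each color class $E_i$ is a set of pairwise non-adjacent (independent) edges. This establishes the bijection and, as a bonus, shows $T^\prime = T$ in this situation, so the edges of the hypertree are literally the edges of $T$.

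For the codimension-one face assertion, I would apply Lemma~\ref{lem:ball-cod}: two facets $F_v, F_w$ share a codimension-one face iff the $T$-path from $v$ to $w$ is monochromatic. When the color classes are independent, monochromatic paths are single edges $e = \{v,w\}$, so the codimension-one faces on two facets are exactly the $F_e$ of Lemma~\ref{lem:ball-fe}, and moreover each such face lies on \emph{exactly} two facets (otherwise $e$ would have a neighboring same-colored edge).

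Finally, for the boundary description, I would use the standard fact that for a triangulated ball $X$ of dimension $d$, the boundary sphere $\partial X$ is the subcomplex whose facets are the codimension-one faces of $X$ lying on exactly one facet, and its Stanley--Reisner ideal is obtained from $I_X$ by adjoining the monomials $x_G$ for those codimension-one faces $G$ of $X$ that lie on \emph{two} facets (thereby removing them from the complex). Since by the previous step these interior codimension-one faces are precisely the $F_e$ for $e \in E(T)$, the Stanley--Reisner ideal of $\partial X$ in $\Spol/L(P)$ is $I(T) + B(T,P)$. The only step that requires any care is verifying that each $F_e$ indeed lies on exactly two facets (not more), which follows from the independence of each $E_i$: adjacent same-colored edges are forbidden, so no third facet $F_u$ can share $F_e$ with $F_v$ and $F_w$ via Lemma~\ref{lem:ball-cod}.
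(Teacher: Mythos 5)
Your proposal is correct and follows essentially the same route as the paper: reduce via the preceding proposition to the condition that the hypertree $T'$ is an ordinary tree (equivalently each $T_{ij}$ is a single edge, i.e.\ each color class is independent), and identify the interior codimension-one faces as the $F_e$ via Lemma~\ref{lem:ball-cod}. You in fact supply more detail than the paper does — notably the verification that each $F_e$ lies on exactly two facets and the explicit justification of the boundary ideal — which is a welcome expansion of the paper's rather terse argument.
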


\begin{proof}
  When the edges are partitioned into independent sets, the hypertree
  $T^\prime$ is an ordinary tree $T$.
  And when a stacked simplicial complex gives an ordinary tree~$T$,
  it is a triangulated ball, and may be realized as a stacked polytope.

  The only faces on a stacked simplical complex not on the boundary, are
  the codimension one faces which are on at least two faces. This gives
  the statement about the Stanley--Reisner ideal of the boundary. 
  \end{proof}

\begin{remark} In \cite{DFN} the first author et al.~{give} the construction
  of large classes of triangulated balls, defined by letterplace ideals of
  posets. The ideal defining the boundary of triangulated balls is given
  in a similar way there.
\end{remark}


In particular
triangulations of simplicial polygons correspond to partitions of
trees $T$ into {\em three parts}, each part being a set of independent edges.
Thus only trees $T$ whose maximal vertex degree is $3$ arise
in this context.

\begin{corollary}
The length of the longest regular sequence of variable differences
giving a squarefree quotient of $\Spol/I(T)$ that corresponds to a triangulated
ball is $|E|- \Delta$, where $\Delta$ is the maximal degree of
a vertex of\/ $T$.
\end{corollary}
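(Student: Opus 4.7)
The plan is to combine Theorem~\ref{thm:ball-partedgeind} with a dimension count and the edge-chromatic number of a tree. By Theorem~\ref{thm:ball-partedgeind}, the regular linear spaces whose squarefree quotient corresponds to a triangulated ball are in bijection with partitions $P$ of $E(T)$ into sets of independent edges. So the problem is to maximize $\dim L(P)$ as $P$ ranges over such partitions.

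First I would express $\dim L(P)$ combinatorially. If $P$ has $r$ parts, then by Theorem~\ref{thm:part-VE} the associated vertex partition $Q$ of $V$ has $r+1$ independent parts $U_0, U_1, \ldots, U_r$. By Theorem~\ref{thm:regtree-part}, $L(P) = L(U_0) \oplus \cdots \oplus L(U_r)$, and each summand $L(U_i)$ has dimension $|U_i|-1$, being spanned by $\{h_e\}_{e \in S_i}$ for any tree $S_i$ on $U_i$ flowing with $T$. Summing and using $|V| = |E|+1$ (since $T$ is a tree), I obtain
\[ \dim L(P) \;=\; \sum_{i=0}^{r} (|U_i|-1) \;=\; |V| - (r+1) \;=\; |E| - r. \]
Maximizing $\dim L(P)$ is therefore equivalent to minimizing the number $r$ of classes in a partition of $E(T)$ into independent edge sets, i.e., to computing the edge chromatic number $\chi'(T)$.

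The remaining step is to verify $\chi'(T) = \Delta$ for every tree $T$. The bound $\chi'(T) \geq \Delta$ is immediate, since the $\Delta$ edges incident to a vertex of maximum degree must receive distinct colors. For the opposite inequality I would induct on $|E|$: choose a leaf edge $e = \{v,w\}$ with $v$ a leaf; since $\Delta(T\setminus\{e\}) \leq \Delta$, the inductive hypothesis gives a proper $\Delta$-edge-coloring of $T\setminus\{e\}$; at most $\deg_T(w)-1 \leq \Delta - 1$ colors appear on the other edges incident to $w$, so at least one color is available for $e$. I expect no real obstacle: the structural work is already done in Theorems~\ref{thm:ball-partedgeind}, \ref{thm:regtree-part} and~\ref{thm:part-VE}, and the tree edge-coloring statement is a standard short induction.
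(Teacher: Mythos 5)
Your proposal is correct and follows the same route as the paper: reduce via Theorem~\ref{thm:ball-partedgeind} to minimizing the number of parts in a partition of $E(T)$ into independent edge sets, i.e., to the edge chromatic number of $T$, which equals $\Delta$. The only differences are that you spell out the dimension count $\dim L(P)=|E|-r$ (which the paper leaves implicit, consistent with its earlier corollary giving $|E|-1$ for two parts) and you prove the fact $\chi'(T)=\Delta$ by the standard leaf-edge induction, whereas the paper simply cites it.
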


\begin{proof}
  This is because the minimal number of parts in a partition of $E(T)$ into
  independent edges, the edge chromatic number of the tree $T$,
  is the maximal degree of a vertex in $T$,
  \cite{Agn}.
\end{proof}

\appendix

\section{Partitions of the vertices and edges of a tree}
\label{sec:part}

We recall the basic result on trees from \cite{FlPar} on the correspondence
between partitions of edges and partitons of vertices into independent sets.
Let $T$ be a tree with vertex set $V$ and 
edge set $E$. We consider partitions of the vertices
\begin{equation} \label{eq:part-V}
  V = V_1 \sqcup V_2 \sqcup \cdots \sqcup V_r
\end{equation}
into disjoint sets such that each $V_i$ is an independent set of vertices.
(This is almost the same as a coloring of vertices, but not quite:
The symmetric group $S_r$ acts on colorings by permuting the color labels
of the $V_i$. So such a partition
is an orbit for the actions of $S_r$. The class of such orbits, or equivalently
of partitions \eqref{eq:part-V} are also called
{\it non-equivalent vertex colorings}, see \cite{HeMe}.)

We also consider partitions of the edges
\[ E = E_1 \sqcup E_2 \sqcup \cdots \sqcup E_s. \]
Here we have no independence requirements. Any partition is good.

\medskip
Now we make a correspondence as follows. Given such a partition of $V$,
make a partition of $E$ as follows: If $v$ and $w$ are vertices consider
the unique path in $T$ linking $v$ and $w$.
Let $f$, respectively
$g$, be the edge incident to $v$, respectively $w$, on this path.
If (i)~$v$ and $w$ are in the {\it same part}~$V_i$ of~$V$ and
(ii)~{\it no other} vertex
on this path is in the part $V_i$, then put $f$
and $g$ into the same part of~$E$, and write $f \sim^{\!\! \prime}_E g$. The partition
of edges is the equivalence relation $\sim_E$ {\it generated}
by $\sim^{\!\! \prime}_E$, i.e., the smallest equivalence relation on $E$ containing $\sim^{\!\! \prime}_E$. Note that in general $\sim^{\!\! \prime}_E$ alone would not be reflexive nor transitive.

\medskip Conversely, given a partition of the edge set $E$, make a partition
of $V$ as follows: Let $v$ and $w$ be distinct vertices, and consider
again the path from $v$ to $w$. If (i)~the edges $f$ and $g$ are distinct,
(ii)~$f$ and $g$ are in the {\it same
part} $E_j$, and (iii)~{\it no other} edge on this path is in the part $E_j$,
then put $v$ and $w$ in the same part of $V$, and write
$v \sim^{\!\! \prime}_V w$. The partition of vertices is the equivalence relation
$\sim_V$ generated by~$\sim^{\!\! \prime}_V$.

\begin{theorem}[\cite{FlPar}] \label{thm:part-VE}
Let $T$ be a tree with vertex set\/ $V$ and edge
set\/ $E$.
The above gives a one-to-one correspondence between partitions of the vertices
$V$ into $r+1$ {\it independent} sets, and partitions of the edges $E$
into $r$ sets.
\end{theorem}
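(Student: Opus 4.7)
The plan is to prove the theorem by induction on $|V|$. The base case $|V|=1$ is trivial: the unique one-part vertex partition corresponds to the empty edge partition. For the inductive step, I will fix a leaf $v\in V$ with unique neighbor $w$ and incident edge $e=\{v,w\}$, set $T'=T-v$ with $V'=V\setminus\{v\}$ and $E'=E\setminus\{e\}$, and write $\mathcal E(\mathcal V)$ for the edge partition associated to a vertex partition $\mathcal V$.

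The key will be a compatibility lemma: for any vertex partition $\mathcal V=V_1\sqcup\cdots\sqcup V_{r+1}$ into independent sets, with restriction $\mathcal V'$ to $V'$, the edge partition $\mathcal E(\mathcal V)$ restricted to $E'$ equals $\mathcal E(\mathcal V')$; furthermore, either (a) $\{v\}$ is a singleton part of $\mathcal V$, and $\{e\}$ is then a singleton part of $\mathcal E(\mathcal V)$, or (b) $v$ lies in a non-singleton part $V_i$, and $e$ then joins the unique class of $\mathcal E(\mathcal V')$ containing every edge $e_u$ (the edge incident to $u$ on the path $vTu$) as $u$ ranges over the $V_i$-vertices \emph{closest} to $v$, meaning those with no other $V_i$-vertex interior to $vTu$. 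Granted the lemma, the counting $|\mathcal E(\mathcal V)|=|\mathcal V|-1$ will follow by induction: case (a) decreases both sides by one, and case (b) leaves both unchanged relative to $(\mathcal V',\mathcal E(\mathcal V'))$.

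The hard part will be showing in case (b) that all closest $V_i$-vertices $u_1,\dots,u_k$ produce edges $e_{u_j}$ lying in one and the same class of $\mathcal E(\mathcal V')$. For any two closest $u_i,u_j$, the path $u_iTu_j$ avoids $v$ (since $v$ is a leaf) and contains no interior $V_i$-vertex by the closeness hypothesis, so $(u_i,u_j)$ is a consecutive $V_i'$-pair in $\mathcal V'$; moreover the edges incident to $u_i$ and $u_j$ along $u_iTu_j$ coincide with $e_{u_i}$ and $e_{u_j}$, because the paths $vTu_i$ and $u_iTu_j$ both exit $u_i$ through its unique edge toward $w$. A single consecutive-pair relation from $\mathcal V'$ then forces all the $e_{u_j}$ into one class of $\mathcal E(\mathcal V')$.

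Finally, bijectivity with the reverse construction will follow from a parallel inductive argument: the singleton-versus-non-singleton dichotomy for $e$ in $\mathcal E$ translates, under the edge-to-vertex construction, into the same dichotomy for $v$ in the induced vertex partition, so the induction hypothesis applied to $T'$ propagates the identity of both compositions. Throughout, the independence of $V_i$ (forcing $w\notin V_i$) and the leaf property of $v$ are what make the combinatorial analysis of case~(b) go through.
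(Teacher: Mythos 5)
The paper does not actually prove Theorem~\ref{thm:part-VE}; it is imported from \cite{FlPart} and stated without proof in the appendix, so there is no in-paper argument to compare yours against. Judged on its own merits, the first half of your plan is sound and well targeted. The leaf-removal induction is a natural device, and you correctly isolate and correctly resolve the one genuinely delicate point of the forward map: for two closest $V_i$-vertices $u_i,u_j$, the path $u_iTu_j$ avoids $v$, has no interior $V_i$-vertex, and leaves $u_i$ and $u_j$ through the edges $e_{u_i}$ and $e_{u_j}$ (the meeting point of $vTu_i$ and $vTu_j$ cannot be $u_i$ or $u_j$, else one of them would be an interior $V_i$-vertex of the other path). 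Hence all the $e_{u_j}$ already lie in one class of $\mathcal E(\mathcal V')$, the restriction of $\mathcal E(\mathcal V)$ to $E'$ is $\mathcal E(\mathcal V')$, and the count $|\mathcal E(\mathcal V)|=|\mathcal V|-1$ follows.

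The gap is in the bijectivity half, which you dispatch with one sentence. Two things are missing. First, well-definedness of the reverse map: you never argue that the classes of $\mathcal V(\mathcal E)$ are independent sets, and this is not automatic, since two adjacent vertices, while never \emph{directly} related (their incident edges on the one-edge path coincide), could a priori be identified by the transitive closure. Second, and more seriously, matching the singleton/non-singleton dichotomy for $v$ and $e$ does not show that the compositions are the identity. To get $\mathcal V(\mathcal E(\mathcal V))=\mathcal V$ in case (b) you must exhibit a \emph{direct} relation $v\sim_V u$ for some $u\in V_i$, which requires that for some closest $u$ the path $vTu$ carries no edge of the class $E_j$ of $e$ in $\mathcal E(\mathcal V)$ other than $e$ and $e_u$; but $E_j$ is a full equivalence class of the generated relation, containing the entire $\mathcal E(\mathcal V')$-class of the $e_{u'}$'s, and nothing in your sketch controls which interior edges of $vTu$ might have been swept into it. You must also rule out $v$ being directly related to a vertex outside $V_i$. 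The symmetric issues arise for $\mathcal E(\mathcal V(\mathcal E))=\mathcal E$. I believe the strategy can be completed, but these verifications are the actual content of the bijection, not routine bookkeeping, and as written the proposal does not contain the ideas needed to carry them out.
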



\begin{example}
Any tree has a unique partition of the vertices into two independent sets
(two colors modulo $S_2$).
This corresponds to the partition of the edges into one part (one color).
\end{example}

\begin{example}\label{ex:redandblueeightedges}
  In Figure~\ref{figfive} 
  we partition the edges into red and black color classes. The vertices
  are then partitioned into three sets, each consisting of independent vertices. 
  The partition of the vertex set of the first tree is
  \[ \{1,3,5\}\cup\{2,6\}\cup\{4\},\]
  and that of the second tree is
  \[\{1,3,5,8,10\}\cup\{2,4,7\}\cup\{6,9\}.\]
\end{example}

\begin{figure}
\begin{center}
\begin{tikzpicture}[dot/.style={draw,fill,circle,inner sep=1pt},scale=1.2]
\draw [help lines,white] (1,-.5) grid (6,0);
\draw[very thick,red] (3,0)--(5,0);
\draw[very thick] (1,0)--(3,0);
\draw[very thick] (5,0)--(6,0);
\fill (1,0)  circle (0.09);
\fill (2,0)  circle (0.09);
\fill (3,0)  circle (0.09);
\fill (4,0)  circle (0.09);
\fill (5,0)  circle (0.09);
\fill (6,0)  circle (0.09);
\fill[yellow] (1,0)  circle (0.065);
\fill[green] (2,0)  circle (0.065);
\fill[yellow] (3,0)  circle (0.065);
\fill[blue] (4,0)  circle (0.065);
\fill[yellow] (5,0)  circle (0.065);
\fill[green] (6,0)  circle (0.065);
\node at (1,.25) {$1$};
\node at (2,.25) {$2$};
\node at (3,.25) {$3$};
\node at (4,.25) {$4$};
\node at (5,.25) {$5$};
\node at (6,.25) {$6$};
\end{tikzpicture}\\
\begin{tikzpicture}[dot/.style={draw,fill,circle,inner sep=1pt},scale=1.1]
\draw[very thick,red] (5,0)--(6,0);
\draw[very thick] (1,0)--(5,0);
\draw[very thick] (6,0)--(7,0);
\draw[very thick] (4,0)--(4,-1);
\draw[very thick,red] (4,-1)--(4,-3);
\fill (1,0)  circle (0.09);
\fill (2,0)  circle (0.09);
\fill (3,0)  circle (0.09);
\fill (4,0)  circle (0.09);
\fill (5,0)  circle (0.09);
\fill (6,0)  circle (0.09);
\fill (7,0)  circle (0.09);
\fill (4,-1)  circle (0.09);
\fill (4,-2)  circle (0.09);
\fill (4,-3)  circle (0.09);
\fill[yellow] (1,0)  circle (0.065);
\fill[green] (2,0)  circle (0.065);
\fill[yellow] (3,0)  circle (0.065);
\fill[green] (4,0)  circle (0.065);
\fill[yellow] (5,0)  circle (0.065);
\fill[blue] (6,0)  circle (0.065);
\fill[green] (7,0)  circle (0.065);
\fill[yellow] (4,-1)  circle (0.065);
\fill[blue] (4,-2)  circle (0.065);
\fill[yellow] (4,-3)  circle (0.065);
\node at (1,.25) {$1$};
\node at (2,.25) {$2$};
\node at (3,.25) {$3$};
\node at (4,.25) {$4$};
\node at (5,.25) {$5$};
\node at (6,.25) {$6$};
\node at (7,.25) {$7$};
\node at (4.22,-1) {$8$};
\node at (4.22,-2) {$9$};
\node at (4.3,-3) {$10$};
\end{tikzpicture}
\caption{}
\label{figfive}
\end{center}
\end{figure}


\bibliographystyle{amsplain}
\bibliography{biblio}

\end{document}